\newtheorem{theorem}{Theorem}[section]
\newtheorem{prop}[theorem]{Proposition}
\newtheorem{lemma}[theorem]{Lemma}
\newtheorem{remark}{Remark}
\title{Almost sure upper bound \\ for random multiplicative functions}
\author{Rachid Caich }
\date{\today}
\begin{document}

\maketitle
\begin{abstract}
    Let $\varepsilon >0$. Let $f$ be a Steinhaus or Rademacher random multiplicative function. We prove that we have almost surely, as $x \to +\infty$, 
   $$
   \sum_{n \leqslant   x} f(n) \ll \sqrt{x} (\log_2 x)^{\frac{3}{4}+ \varepsilon}.
   $$ \\
   \textbf{Keywords:} Random multiplicative functions, large fluctuations,  law of iterated logarithm, mean values of multiplicative functions, Rademacher functions, Steinhaus  functions, Doob’s inequality, Hoeffding’s inequality, martingales.\\
   \textbf{2000 Mathematics Subject Classification:} 11N37, (11K99, 60F15).
\end{abstract}

\section{Introduction}
The aim of this article is to study the large fluctuations of random multiplicative functions. Random multiplicative functions has been a very attractive topic in the recent years. There are at least two models of random multiplicative functions that have been frequently studied in number theory and probability (see for example, Harper \cite{Harper}, \cite{Harper2}, \cite{Harper3}, Lau--Tenenbaum--Wu \cite{Tenenbaum}, Chatterjee--Soundararajan \cite{sound_chat}, Benatar--Nishry--Rodgers \cite{benatar}). Let $\mathcal{P}$ be the set of the prime numbers, \textit{a Steinhaus random multiplicative function} is obtained by letting $(f(p))_{p\in \mathcal{P}}$ be a sequence of independent Steinhaus random variables (i.e distributed uniformly on the unit circle $\{ |z|=1\}$), and then setting
$$ f(n):= \prod_{p^{a}||n} f(p)^{a}  \text{ for all } n\in \mathbb{N},$$
where $ p^{a}||n$ means that $p^{a}$ is the highest power of $p$ that divides $n$. \textit{A Rademacher multiplicative function} is obtained by letting $(f(p))_{p\in \mathcal{P}}$ be independent Rademacher random variables (i.e taking values $\pm 1$ with probability 1/2 each), and setting
$$
f(n)=\left\{
  \begin{array}{@{}ll@{}}
    \prod_{p |n} f(n) , & \text{if}\ n \text{ is squarefree} \\
    0, & \text{otherwise.}
  \end{array}\right.
$$
The Rademacher model was introduced by Wintner \cite{Wintner}, in 1944 as a heuristic model of M\"{o}bius function $\mu$ (see the introduction in \cite{Tenenbaum}). With a little change, one can obtain the probabilistic model for a real primitive Dirichlet character (see Granville--Soundararajan \cite{GranvilleSound2}). Steinhaus random multiplicative functions model the randomly chosen Dirichlet character $\chi $ or continuous characters $n \mapsto n^{it}$, see for example section 2 of Granville--Soundararajan \cite{GranvilleSound}. 

A classical result in the study of sums of independent random variables is the Law of the Iterated Logarithm, which predicts the almost sure size of the largest fluctuation of those sums. Let $(\xi_k)_{k \in \mathbb{N}}$ be an independent sequence of real random variables taking value $\pm1$ with probability 1/2 each. Khinchine's Law of the Iterated Logarithm consists in the almost sure statement
$$
\limsup_{N \to + \infty} \frac{|\sum_{k \leqslant  N}\xi_k|}{\sqrt{2N \log_2 N}}=1.
$$
Here and in the sequel $\log_k$ denotes the $k$-fold iterated logarithm. See for instance Gut \cite{Gut} Chapter 8 Theorem 1.1. Note that the largest fluctuations as $N$ varies (of the size $\sqrt{N \log_2 N}$) are somewhat larger than the random fluctuations one expects at a fixed point ( $\mathbb{E}\big[|\sum_{k \leqslant  N}\xi_k|\big]\asymp\sqrt{N}$).

Khinchine's theorem can't be applied in the case of random multiplicative functions, because their values are not independent. However, following Harper (see the end of the Introduction in \cite{Harper3}), we believe that a suitable version of the law of the iterated logarithm might hold.

For $f$ a multiplicative function, we denote $M_f(x):=\sum_{n\leqslant  x} f(n) $. Wintner \cite{Wintner} studied the case where $f$ is Rademacher random multiplicative function, and he was able to prove, for any fixed $\varepsilon >0$, we have almost surely
$$
M_f(x)= O( x^{1/2 +\varepsilon} )
$$
and 
$$
M_f(x) = \Omega (x^{1/2 -\varepsilon}). 
$$
This has been further improved by Erd\H{o}s \cite{Erdos} who proved that almost surely one has the bound $O(\sqrt{x}\log^A x)$ and one almost surely does not have the bound $ O( \sqrt{x}\log^{-B} x)$ for some nonnegative real numbers $A$ and $B$. In the 1980s, Hal\'{a}sz \cite{halasz} introduced several novel ideas which had made a further progress. By conditioning, coupled with hypercontractive inequalities, he proved in the Rademacher case that
$$
M_f(x)= O\big(\sqrt{x} \exp (A\sqrt{\log_2 x \log_3 x}) \big)
$$
for some nonnegative $A$. Recently, Lau--Tenenbaum--Wu \cite{Tenenbaum} (see also Basquin \cite{basquin}) improved the analysis of hypercontractive inequalities in Hal\'{a}sz’s argument, establishing, for Rademacher case, an almost sure upper bound $O(\sqrt{x}(\log_2 x)^{2+\varepsilon})$.

For the lower bound, Harper \cite{Harper3} proved that for any function $V(x)$ tending to infinity with $x$, there almost surely exists arbitrarily large values of $x$ for which
$$
\big|M_f(x) \big| \gg \frac{\sqrt{x} (\log_2 x)^{1/4}}{V(x)}.
$$
Moreover, Mastrostefano \cite{Mastrostefano} recently proved an upper bound for the sum restricted to integers that possess a large prime factor. We denote $P(n)$ to be the largest prime factor dividing $n$ with the convention $P(1)=1$. In \cite{Mastrostefano}, it was proved that we have almost surely, as $x \to +\infty$
$$
\sum_{\substack{n\leqslant x \\ P(n) > \sqrt{x}  }}f(n) \ll \sqrt{x}(\log_2 x)^{1/4+\varepsilon}.
$$
This bound is compared to the first moment: Harper, in \cite{Harper}, proved, when $x \to +\infty$
$$
\mathbb{E}\bigg[ \big| M_f(x)\big| \bigg] \asymp \frac{\sqrt{x}}{(\log_2 x)^{1/4}}.
$$
This discrepancy of a factor $\sqrt{\log_2 x}$ between the first moment and the almost sure behaviour is similar to the Law of the Iterated Logarithm for independent random variables. For this reason Harper conjectured that for any fixed $\varepsilon >0$, we might have almost surely, as $x \to +\infty$
$$ 
M_f(x)\ll \sqrt{x} (\log_2 x)^{1/4+\varepsilon}
$$
for both cases Steinhaus and Rademacher (see the introduction in \cite{Harper3} for more details). 

\noindent The main objective of this work is to improve upon the result of Lau--Tenenbaum--Wu to achieve a $3/4$ improvement.
\begin{theorem}\label{theoreme_principal_chapter_1}
Let $\varepsilon >0$. Let $f$ be a Steinhaus or Rademacher random multiplicative function. We have almost surely,  as $x \to + \infty$
\begin{equation}
    M_f(x) \ll \sqrt{x} (\log_2 x)^{\frac{3}{4}+ \varepsilon}.
\end{equation}
\end{theorem}
\section{Sketch of the proof}
Let $f$ be a Steinhaus or Rademacher multiplicative function. As is typical when aiming to establish almost sure bounds for a sum of random variables, we will focus our analysis on a sequence of "test points" (say $x_i$). These test points are chosen to be sparse, yet sufficiently close to allow for manageable control over the increments of $f$ between consecutive elements. We will then consolidate the information obtained from each test point using the first Borel–Cantelli lemma. The second step is to split  $M_f(x_i)$ according to the largest prime factor $P(n)$ of the summation variable $n$. Recall that $v_p(n)$ denotes the $p$-adic valuation of an integer $n$ (i.e. the exponent of $p$ in the product of prime factors decomposition of $n$).  Let $(y_j)_{0\leqslant  j \leqslant  J}$ a nondecreasing sequence such that $J\asymp \log_2 x_i$. The choice of $y_j$ is such that $\log y_{j-1} \sim \log y_j$. One could chose $\log y_{j} \sim \ell^{c}\log y_j$ for some constant $c$, but it will not change the result. The reason behind this choice is to approximate the factor $\frac{1}{\log x_i/z}$ that will arise latter for $x_i/y_j<z\leqslant x_i/y_{j-1}$ with $\frac{1}{\log y_j}$. 
$$M_f(x_i)=\sum_{\substack{n \leqslant  x_i \\ P(n) \leqslant  y_0 }}f(n)+ M_f^{(1)}(x_i)+M_f^{(2)}(x_i)$$
where
\begin{equation}\label{equationtoexplain}
    M_f^{(1)}(x_i):=\sum_{j=1}^J \sum_{y_{j-1} < p \leqslant  y_j} f(p) \sum_{\substack{ n\leqslant  x_i/p\\ P(n) < p}} f(n)
\end{equation}
and
\begin{equation}\label{equationtoexplain2}
    M_f^{(2)}(x_i):= \sum_{j=1}^J \sum_{\substack{y_{j-1}^2 < d \leqslant  x_i \\ v_{P(d)}(d) \geqslant 2}}^{y_{j-1},y_j} f(d) \sum_{\substack{ n\leqslant  x_i/d\\ P(n) \leqslant y_{j-1}}} f(n).
\end{equation}
where the symbol $ \sum^{y,z}$ indicates a sum restricted to integers all of whose prime factors belong to the interval $ ]y, z]$. Note that $ M_f^{(2)}(x_i)$ is equal to $0$ for Rademacher case. By choosing $y_0$ small enough, the sum $\sum_{\substack{n \leqslant  x_i \\ P(n) \leqslant  y_0 }}f(n)$ is ``small'', thus it can be neglected. We show first that $ M_f^{(2)}(x_i)$ is ``small '', this, can be done in few steps. Moreover, the expectation, conditioning on the value $f(q)$ for prime numbers $q <p$, of the quantity $f(p) \sum_{\substack{ n\leqslant  x_i/p\\ P(n) < p}} f(n)$ is equal to $0$. Therefore, $M_f^{(1)}(x_i)$ is sum of martingale differences with some kind of conditional quantity
\begin{equation}\label{varianceexplan}
    V(x_i)= \sum_{y_{0} < p \leqslant  y_J} \bigg| \sum_{\substack{ n\leqslant  x_i/p\\ P(n) < p}} f(n)\bigg|^2.
\end{equation}
By smoothing the above quantity, we get
\begin{equation}\label{expression_introduc}
    \widetilde{V}(y_j,x_i) \approx \int_{\frac{x_i}{y_{j-1}}}^{\frac{x_i}{y_j}} \frac{1}{\log (x_i/z)} \bigg| \sum_{\substack{ n\leqslant  z\\ P(n) \leqslant \frac{x_i}{z}}} f(n)\bigg|^2 \frac{{\rm d}z}{z^2}.
\end{equation}
One could compare the right-hand expression above to equation (7.1) in \cite{Mastrostefano}. In our case, the friable number $P(n)\leqslant x_i/z$ varies with $z$, making it impossible to transform into an Euler product. To eliminate this dependence, we proceed by bounding \eqref{expression_introduc} by
$$
\frac{1}{\log (y_j)}\int_{0}^{+\infty} \sup_{y_{j-1}<p\leqslant y_j} \bigg| \sum_{\substack{ n\leqslant  z\\ P(n) \leqslant p}} f(n)\bigg|^2 \frac{{\rm d}z}{z^2}.
$$
By conditioning on $ \mathcal{F}_{j-1}:=\{ f(p); p\leqslant y_{j-1}  \}$, one can bound the above quantity by  
$$
\mathbb{E}\bigg[\frac{1}{\log y_j} \int_{-\infty}^{+\infty}\frac{\big|F_{j}(1/2+it)\big|^2}{|1/2+it|^2} \,\bigg|\,  \mathcal{F}_{j-1}\bigg].
$$
This will give rise to a submartingale sequence.
A new version of Hoeffding's inequality (see Lemma~\ref{hoeffding}) allows us to reduce the problem to study the expression \eqref{varianceexplan}. The use of this inequality is exactly what gives a strong exponential upper bound. More specifically, the goal becomes to prove that we have almost surely as $x_i$ tends to infinity $V(x_i) \leqslant  x_i(\log_2 x_i)^{1/2}$. The key point here is to notice that among the terms in the sum of $V(x_i)$ that contribute significantly are the $\sum_{y_{j-1} < p \leqslant  y_j} \bigg| \sum_{\substack{ n\leqslant  x_i/p\\ P(n) < p}} f(n)\bigg|^2 $ such that $y_j$ is very ``close'' to $x_i$ (see Section \ref{Section_L3}). In fact the number of $y_j$ close to $x_i$, that contribute much in the sum, is less than $ (\log_2 x_i)^{\varepsilon/50} $. 
Thus we are reduced again to study 
\begin{equation}\label{varianceexplain2}
         \widetilde{V}(y_j,x_i):= \sum_{y_{j-1} < p \leqslant  y_j} \bigg| \sum_{\substack{ n\leqslant  x_i/p\\ P(n) < p}} f(n)\bigg|^2.
\end{equation}
By smoothing and adjusting a little bit $\widetilde{V}(y_j,x_i)$, this will give rise to a supermartingale sequence $(U_{j,i})_{j\geqslant 0} $ where $U_{0,i}=I_0$ doesn't depend on $x_i$ for each $x_i$. At the end, we use Harper's low moment result. We get roughly speaking, almost surely  $V(x_i) \leqslant  x_i(\log_2 x_i)^{1/2} $ as $x_i$ tends to infinity.
\section{Preliminary results}
\subsection{Notation}
Let's start by some definitions. Let $(\Omega, \mathcal{F}, \mathbb{P})$ be a probabilistic space. We call a \textit{filtration} every sequence $(\mathcal{F}_n)_{n \in \mathbb{N}}$ of increasing sub-$\sigma$-algebras of $\mathcal{F}$. We say that a sequence of real random variables $(Z_n)_{n \in \mathbb{N}}$ is \textit{submartingale} (resp. \textit{supermartingale}) sequence with respect to the filtration $(\mathcal{F}_n)_{n \in \mathbb{N}}$, if the following properties are satisfied:\\
- $Z_n$ is $\mathcal{F}_n$ measurable\\
- $\mathbb{E}[|Z_n|]< +\infty$\\
- $\mathbb{E}[Z_{n+1} \, | \, \mathcal{F}_n] \geqslant Z_n$ (resp. $\mathbb{E}[Z_{n+1}\, | \,\mathcal{F}_n] \leqslant  Z_n$ ) almost surely.\\
We say that $(Z_n)_{n \in \mathbb{N}}$ is martingale difference sequence with respect to the same filtration $(\mathcal{F}_n)_{n \in \mathbb{N}}$ if \\
- $Z_n$ is $\mathcal{F}_n$ measurable\\
- $\mathbb{E}[|Z_n|]< +\infty$\\
- $\mathbb{E}[Z_{n+1} \, | \,\mathcal{F}_n] =0$ almost surely.\\
An event $ E \in \mathcal{F}$ happens \textit{almost surely} if  $\mathbb{P}[E]=1$. \\
Let $Z$ be a random variable and let $\mathcal{H}_1\subset \mathcal{H}_2 \subset \mathcal{F}$ some sub-$\sigma$-algebras,  we have
$$
\mathbb{E}\big[ \mathbb{E}\big[ Z \, \big| \,\mathcal{H}_2 \big] \,\big| \, \mathcal{H}_1  \big]=\mathbb{E}\big[ Z\, \big| \, \mathcal{H}_1 \big].
$$
\subsection{Known Tools}
\begin{lemma}\label{hypercontractivity}
 Let $f$ be a Steinhaus or Rademacher random multiplicative function. For every sequence $(a_n)_{n \geqslant 1}$ of complex numbers and every positive integer $m \geqslant 1$, we have 
 $$ \mathbb{E}\left[\left|\sum_{n \geqslant 1} a_{n} f(n)\right|^{2 m}\right] \leqslant  \left(\sum_{n \geqslant 1}\left|a_{n}\right|^{2}  \tau_{2 m-1}(n)\right)^{m}, $$
where $\tau_m(n)$ is the $m$-fold divisor function.
\end{lemma}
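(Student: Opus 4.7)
The plan is to use expansion plus orthogonality and Cauchy--Schwarz in the Steinhaus case, and the Bonami--Beckner hypercontractive inequality in the Rademacher case.

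In the Steinhaus case, the complete multiplicativity of $f$ gives
\[ \Bigl(\sum_{n\geqslant 1} a_n f(n)\Bigr)^m = \sum_{N\geqslant 1} A_N\, f(N), \qquad A_N := \sum_{n_1\cdots n_m = N} a_{n_1}\cdots a_{n_m}, \]
and the orthogonality $\mathbb{E}[f(N)\overline{f(N')}] = \prod_p \mathbb{E}[f(p)^{v_p(N)-v_p(N')}] = \mathbf{1}_{N=N'}$, which follows from $f(p)$ being uniform on the unit circle, yields
\[ \mathbb{E}\Bigl[\Bigl|\sum_{n\geqslant 1} a_n f(n)\Bigr|^{2m}\Bigr] = \sum_{N\geqslant 1} |A_N|^2. \]
Cauchy--Schwarz gives $|A_N|^2 \leqslant \tau_m(N) \sum_{n_1\cdots n_m = N} |a_{n_1}|^2 \cdots |a_{n_m}|^2$; summing over $N$, interchanging summations, and successively applying the submultiplicativity $\tau_m(n_1\cdots n_m) \leqslant \prod_i \tau_m(n_i)$ and the monotonicity $\tau_m(n) \leqslant \tau_{2m-1}(n)$ factorizes the right-hand side as $\bigl(\sum_n |a_n|^2 \tau_{2m-1}(n)\bigr)^m$.

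In the Rademacher case, $f$ vanishes on non-squarefree integers, so one may assume $a_n = 0$ otherwise. Then $\sum_n a_n f(n) = \sum_n a_n \prod_{p\mid n} f(p)$ is a multilinear polynomial in the independent Rademacher variables $(f(p))_{p\in\mathcal{P}}$ with coefficient $a_n$ at the monomial $\prod_{p\mid n} f(p)$. The Bonami--Beckner hypercontractive inequality applied at level $q=2m$ gives
\[ \Bigl\|\sum_n a_n f(n)\Bigr\|_{2m}^2 \leqslant \sum_n |a_n|^2 (2m-1)^{\omega(n)}, \]
and since $\tau_{2m-1}(n) = (2m-1)^{\omega(n)}$ on squarefree $n$, raising to the $m$-th power produces the desired bound.

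The main subtlety lies in the Rademacher case: a direct expansion-plus-Cauchy--Schwarz argument is obstructed by the parity-of-exponents constraint in the surviving tuples (whose product must be a perfect square, rather than satisfy a clean equality of factorizations), which forces an appeal to the Bonami--Beckner hypercontractivity. The Steinhaus case in fact yields the strictly stronger bound with $\tau_m(n)$ in place of $\tau_{2m-1}(n)$, so the uniform statement with $\tau_{2m-1}$ is driven by the Rademacher case.
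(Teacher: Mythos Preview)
The paper does not give a proof of this lemma; it merely cites Bonami (Chapter~III) and Hal\'asz (Lemma~2). Your argument is correct and is essentially a reconstruction of what those references contain: the Rademacher case is precisely Bonami's hypercontractive inequality for multilinear Rademacher polynomials, and your Steinhaus argument (expand, use orthogonality of the characters $f(N)$, apply Cauchy--Schwarz, then submultiplicativity of $\tau_m$) is the standard direct moment computation in the spirit of Hal\'asz. Your remark that the Steinhaus case in fact yields the sharper bound with $\tau_m$ in place of $\tau_{2m-1}$ is correct; the $\tau_{2m-1}$ in the stated lemma is there solely to accommodate the Rademacher case uniformly.

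One tiny point you might make explicit in the Rademacher case: the Bonami inequality is usually stated for real-valued polynomials, whereas here the $a_n$ may be complex. This is harmless---writing $a_n=b_n+ic_n$ and using Minkowski in $L^m$ on $|P|^2=P_1^2+P_2^2$ gives $\|P\|_{2m}^2\leqslant\|P_1\|_{2m}^2+\|P_2\|_{2m}^2$, and applying the real Bonami bound to $P_1,P_2$ recovers $\sum_n|a_n|^2(2m-1)^{\omega(n)}$---but it is worth a sentence.
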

\begin{proof}[Proof]
See Bonami \cite{Benami} Chapter III. Refer also to lemma 2 of Hal\'{a}sz \cite{halasz} for another proof. Already used by Lau--Tenenbaum--Wu in \cite{Tenenbaum}, Harper \cite{Harper2} and recently by Mastrostefano \cite{Mastrostefano}.
\end{proof}
\begin{lemma}\label{hypercontractivityresult}
Let $m \geqslant 2$ an integer. Then, uniformly for $x\geqslant 3$, we have, 
$$ \sum_{n \leqslant   x} \tau_m(n) \leqslant   x (2\log x)^{m-1}.$$
\end{lemma}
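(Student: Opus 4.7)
The plan is to prove this by a direct combinatorial argument. I would first rewrite $\tau_m(n)$ as the number of ordered representations of $n$ as a product of $m$ positive integers, so that
\[
\sum_{n \leqslant x} \tau_m(n) = \#\{(d_1,\dots,d_m) \in \mathbb{N}^m : d_1 d_2 \cdots d_m \leqslant x\}.
\]
Singling out the last coordinate $d_m$ and applying the trivial bound $\lfloor x/(d_1 \cdots d_{m-1}) \rfloor \leqslant x/(d_1 \cdots d_{m-1})$, I would obtain
\[
\sum_{n \leqslant x} \tau_m(n) \leqslant x \sum_{\substack{d_1,\dots,d_{m-1} \geqslant 1 \\ d_1 \cdots d_{m-1} \leqslant x}} \frac{1}{d_1 \cdots d_{m-1}}.
\]

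Next I would drop the constraint $d_1\cdots d_{m-1} \leqslant x$ in each variable separately (keeping $d_i \leqslant x$), which gives the factorized upper bound $\bigl(\sum_{d \leqslant x} 1/d\bigr)^{m-1}$. The elementary estimate $\sum_{d \leqslant x} 1/d \leqslant 1 + \log x$ then yields the bound $x(1+\log x)^{m-1}$. Since the hypothesis $x \geqslant 3 > e$ ensures $\log x \geqslant 1$, we have $1 + \log x \leqslant 2 \log x$, and thus the claimed bound $x(2\log x)^{m-1}$ follows.

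There is no real obstacle here: the argument is a routine swap of summation and a Mertens-type estimate, and the factor $2$ in $(2\log x)^{m-1}$ is precisely what one needs to absorb the $+1$ coming from $\sum_{d \leqslant x} 1/d \leqslant 1 + \log x$ under the mild assumption $x \geqslant 3$. An alternative but equivalent route would be induction on $m$, with base case $m=2$ handled by Dirichlet's hyperbola-style bound $\sum_{n \leqslant x}\tau(n) \leqslant x(1 + \log x)$ and inductive step using $\tau_{m+1}(n) = \sum_{d \mid n}\tau_m(n/d)$; I would prefer the direct approach above since it avoids carrying the inductive constants.
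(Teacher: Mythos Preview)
Your argument is correct. The paper does not give its own proof of this lemma; it simply cites lemma~3.1 of \cite{benatar}. Your direct combinatorial argument---counting ordered $m$-tuples $(d_1,\dots,d_m)$ with $d_1\cdots d_m\leqslant x$, summing out $d_m$, relaxing the joint constraint $d_1\cdots d_{m-1}\leqslant x$ to the individual constraints $d_i\leqslant x$, and then using $\sum_{d\leqslant x}1/d\leqslant 1+\log x\leqslant 2\log x$ for $x\geqslant 3$---is a clean, self-contained proof that avoids any external reference. The only point worth noting is that the relaxation step is justified because $d_1\cdots d_{m-1}\leqslant x$ with all $d_i\geqslant 1$ forces each $d_i\leqslant x$, so you are indeed enlarging the sum; you implicitly use this but it is worth stating. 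Otherwise the proof is complete and arguably preferable to a bare citation.
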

\begin{proof}[Proof]
See lemma 3.1 in \cite{benatar}.
\end{proof}
\begin{lemma}{(Parseval's identity).}\label{parseval} Let $(a_n)_{n \in \mathbb{N}^{*}}$ be sequence of complex numbers $A(s):= \sum_{n=1}^{+ \infty} \frac{a_n}{n^s}$ denotes the corresponding Dirichlet series and let also $\sigma_c$ denote its abscissa of convergence. Then for any $\sigma > \max(0,\sigma_c)$, we have 
$$ \int_{0}^{+ \infty} \frac{\big|\sum_{n \leqslant   x} a_n \big|^2}{x^{1+2\sigma}}dx=\frac{1}{2\pi}\int_{- \infty}^{+ \infty} \bigg| \frac{A(\sigma + it)}{\sigma +it} \bigg|^2 {\rm d}t.$$
\begin{proof}[Proof]
See Eq (5.26) in \cite{MontgVaugh}.
\end{proof}
\end{lemma}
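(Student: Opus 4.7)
The plan is to recognise the left-hand integral as the $L^2$-norm (with respect to a suitably scaled Haar-type measure) of the summatory function $S(x):=\sum_{n\leqslant x}a_n$, and the right-hand integral as the $L^2$-norm of its Fourier transform, so the identity follows from the Plancherel theorem on $\mathbb{R}$. The bridge between the two is the classical partial summation formula expressing $A(s)/s$ as a Mellin transform of $S(x)$.

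First I would note that for $\sigma>\max(0,\sigma_c)$ the series $A(s)$ converges and Abel's summation gives, for $s=\sigma+it$,
\begin{equation*}
\frac{A(s)}{s}=\int_{1}^{+\infty}\frac{S(x)}{x^{s+1}}{\rm d}x=\int_{0}^{+\infty}\frac{S(x)}{x^{s+1}}{\rm d}x,
\end{equation*}
the second equality because $S(x)=0$ for $x<1$. Substituting $x=e^{u}$ this becomes
\begin{equation*}
\frac{A(\sigma+it)}{\sigma+it}=\int_{-\infty}^{+\infty}S(e^{u})e^{-\sigma u}e^{-itu}{\rm d}u=\widehat{g}(t),
\end{equation*}
where $g(u):=S(e^{u})e^{-\sigma u}$ and $\widehat{g}$ denotes its Fourier transform (in the convention without the $1/\sqrt{2\pi}$ prefactor).

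Next I would verify that $g\in L^{2}(\mathbb{R})$ so that Plancherel applies: by the Kronecker-type bound coming from convergence of $A(s)$ at $\sigma$, one has $S(x)\ll x^{\sigma}$ as $x\to+\infty$, while $S(x)$ is bounded as $x\to 0^{+}$ (indeed vanishes for $x<1$); since $\sigma>0$ the function $S(e^{u})e^{-\sigma u}$ is square-integrable over $\mathbb{R}$. Plancherel's theorem then yields
\begin{equation*}
\int_{-\infty}^{+\infty}|g(u)|^{2}{\rm d}u=\frac{1}{2\pi}\int_{-\infty}^{+\infty}|\widehat{g}(t)|^{2}{\rm d}t.
\end{equation*}
Reversing the substitution $u=\log x$, $\mathrm{d}u=\mathrm{d}x/x$, the left-hand side becomes $\int_{0}^{+\infty}|S(x)|^{2}x^{-1-2\sigma}{\rm d}x$, while the right-hand side is exactly $\frac{1}{2\pi}\int_{-\infty}^{+\infty}|A(\sigma+it)/(\sigma+it)|^{2}{\rm d}t$, which is the claim.

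The main technical obstacle is the $L^{2}$ verification and the justification of the Abel summation identity up to $+\infty$; both require the strict inequalities $\sigma>0$ and $\sigma>\sigma_{c}$, since the former ensures integrability near $u=-\infty$ (i.e.\ near $x=0$, where $S(x)$ is merely bounded rather than small) and the latter ensures the boundary term at infinity in Abel summation vanishes and that $S(x)x^{-\sigma}\to 0$. Since this is a standard result quoted from Montgomery--Vaughan, I would simply reference the above derivation, noting that the non-strict cases are handled by a limiting argument on $\sigma$.
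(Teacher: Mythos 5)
Your proposal is correct in substance, and it is essentially the standard Plancherel/Mellin argument; the paper itself gives no proof but simply cites Eq.\ (5.26) of Montgomery--Vaughan, which is proved along exactly the lines you describe, so you are in effect supplying the omitted standard derivation rather than taking a different route. Two small points of precision. First, the bound you quote, $S(x)\ll x^{\sigma}$, is by itself not enough to conclude that $g(u)=S({\rm e}^{u}){\rm e}^{-\sigma u}$ lies in $L^{2}(\mathbb{R})$: it only makes $g$ bounded on $[0,+\infty)$. What you need (and what the hypothesis $\sigma>\max(0,\sigma_c)$ gives) is a bound with a strictly smaller exponent: choose $\theta$ with $\max(0,\sigma_c)<\theta<\sigma$; convergence of $A(s)$ at $s=\theta$ yields $S(x)\ll x^{\theta}$ by partial summation (Kronecker's lemma), whence $g(u)\ll {\rm e}^{-(\sigma-\theta)u}$ for $u\geqslant 0$ and $g\in L^{1}\cap L^{2}$; this also justifies both the vanishing boundary term in the Abel summation and the pointwise identification of $\widehat{g}(t)$ with $A(\sigma+it)/(\sigma+it)$. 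Second, the role you assign to $\sigma>0$ (integrability near $x=0$) is moot, since $S(x)$ vanishes identically for $x<1$; the condition $\sigma>0$ is really needed at $x\to+\infty$ in the case $\sigma_c\leqslant 0$, where $S(x)$ is merely bounded and the exponent $\max(0,\sigma_c)$ in the bound above is $0$. With these adjustments the argument is complete; the final remark about a limiting argument for non-strict cases is unnecessary, since the lemma only asserts the identity for strict $\sigma>\max(0,\sigma_c)$.
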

\noindent We define the following parameter
\begin{equation}\label{definition_a_f}
    a_f=
     \begin{cases}
        1 & \text{ if } f \text{ is a Rademacher multiplicative function} \\
        -1  & \text{ if } f \text{ is a Steinhaus multiplicative function}.
     \end{cases}
\end{equation}
\begin{lemma}{(Euler product result)}\label{lemmarachid2}
Let $f$ be a Rademacher or Steinhaus random multiplicative function.  Let $t$ a real number and $2 \leqslant   x \leqslant   y$, we have
$$ \mathbb{E} \Bigg[ \prod_{x < p \leqslant   y} \bigg| 1+a_f\frac{f(p)}{p^{1/2+it}} \bigg|^{2a_f} \Bigg] = \prod_{x < p \leqslant   y} \bigg( 1+\frac{a_f}{p} \bigg)^{a_f}. $$
\end{lemma}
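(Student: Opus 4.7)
The plan is to use the independence of the family $(f(p))_{p \in \mathcal{P}}$ to factor the expectation as a product over primes, and then evaluate each local factor by a direct computation, which differs slightly between the two models.

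First I would note that, since $(f(p))_{p}$ is an independent family, the random variables $\bigl|1 + a_f f(p)/p^{1/2+it}\bigr|^{2 a_f}$ indexed by primes $p \in (x,y]$ are independent (each being a function of the single variable $f(p)$). Consequently
\[
\mathbb{E}\Bigg[\prod_{x < p \leqslant y}\Bigl|1 + a_f\frac{f(p)}{p^{1/2+it}}\Bigr|^{2 a_f}\Bigg]
= \prod_{x < p \leqslant y}\mathbb{E}\Bigl[\,\bigl|1 + a_f f(p)/p^{1/2+it}\bigr|^{2 a_f}\,\Bigr],
\]
so the problem reduces to establishing, for each prime $p\geqslant 2$, the local identity
\[
\mathbb{E}\Bigl[\bigl|1 + a_f f(p)/p^{1/2+it}\bigr|^{2 a_f}\Bigr] = \Bigl(1 + \tfrac{a_f}{p}\Bigr)^{a_f}.
\]

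In the Rademacher case ($a_f = 1$) I would simply expand the modulus squared:
\[
\bigl|1 + f(p) p^{-1/2-it}\bigr|^2 = 1 + f(p)\bigl(p^{-1/2-it} + p^{-1/2+it}\bigr) + f(p)^2 p^{-1}.
\]
Using $f(p)^2 = 1$ and $\mathbb{E}[f(p)]=0$, the expectation collapses to $1 + 1/p$, as required. In the Steinhaus case ($a_f = -1$), I would write
\[
\bigl|1 - f(p) p^{-1/2-it}\bigr|^{-2} = \frac{1}{1 - f(p) p^{-1/2-it}} \cdot \frac{1}{1 - \overline{f(p)}\, p^{-1/2+it}},
\]
and expand each factor as an absolutely convergent geometric series (valid since $p^{-1/2} < 1$), obtaining
\[
\sum_{k,\ell \geqslant 0} f(p)^{k}\,\overline{f(p)}^{\,\ell}\, p^{-(k+\ell)/2}\, p^{-i(k-\ell)t}.
\]
Since $|f(p)| = 1$ the double series converges absolutely with a deterministic integrable bound $\sum_{k,\ell} p^{-(k+\ell)/2} = (1 - p^{-1/2})^{-2}$, so Fubini/dominated convergence lets me interchange $\mathbb{E}$ and the summation. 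Because $f(p)$ is uniform on the unit circle, $\mathbb{E}[f(p)^{k}\overline{f(p)}^{\,\ell}] = \delta_{k,\ell}$, leaving only the diagonal terms, which sum to $\sum_{k\geqslant 0} p^{-k} = (1 - 1/p)^{-1}$, matching $(1 + a_f/p)^{a_f}$.

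There is essentially no serious obstacle — the one subtle point is the interchange of expectation and infinite summation in the Steinhaus computation, but this is dispatched by the uniform geometric bound above. Taking the product over $x < p \leqslant y$ of these local identities yields the claimed formula.
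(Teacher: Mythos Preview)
Your proof is correct. The paper does not actually supply an argument for this lemma; it simply cites Mastrostefano, lemma~2.4. Your direct computation---factoring by independence and evaluating each local expectation (an elementary moment in the Rademacher case, a geometric-series expansion with the orthogonality relation $\mathbb{E}[f(p)^k\overline{f(p)}^{\ell}]=\delta_{k,\ell}$ in the Steinhaus case)---is exactly the standard route and is presumably what is done in the cited reference.
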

\begin{proof}[Proof]
See Mastrostefano \cite{Mastrostefano} lemma 2.4.
\end{proof}
\noindent Let $(A_n)_{n \geqslant 1}$ be sequence of events. Recall that $\limsup_{n \to + \infty} A_n = \bigcap_{n\geqslant 1} \bigcup_{k\geqslant n} A_k $.
\begin{lemma}{(Borel--Cantelli's First Lemma)}. \label{borelcantelli_chapter_1} Let $(A_n)_{n \geqslant 1}$ be sequence of events. Assuming that $\sum_{n=1}^{+ \infty}\mathbb{P}[A_n] < + \infty $ then $\mathbb{P}[\limsup_{n \to + \infty} A_n]=0$.
\end{lemma}
\begin{proof}[Proof]
See theorem 18.1 in \cite{Gut}.
\end{proof}
\begin{lemma}\label{technical_lemma_inequality}
Let $b_0,b_1,...,b_n$ be any complex numbers. We have
\begin{equation}\label{010}
    \int_{0}^1 \big|b_0+\sum_{k=1}^n {\rm e}^{i2 \pi k \vartheta}b_k \big| {\rm d}\vartheta \geqslant |b_0|.
\end{equation}
\end{lemma}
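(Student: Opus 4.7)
My plan is to recognize this as a classical orthogonality-plus-triangle-inequality fact about trigonometric polynomials, and to carry it out in two short steps. Write $P(\vartheta) = b_0 + \sum_{k=1}^n e^{i 2\pi k \vartheta} b_k$. The first step is to extract $b_0$ as the mean value of $P$ on $[0,1]$. This uses the standard orthogonality relation
\begin{equation*}
\int_0^1 e^{i 2\pi k \vartheta}\, {\rm d}\vartheta = \begin{cases} 1 & \text{if } k=0, \\ 0 & \text{if } k \in \mathbb{Z}\setminus\{0\}, \end{cases}
\end{equation*}
which, upon integrating the finite sum term by term, immediately gives $\int_0^1 P(\vartheta)\, {\rm d}\vartheta = b_0$.

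The second step is to apply the triangle inequality for integrals (equivalently, Jensen's inequality applied to the convex function $z \mapsto |z|$ against the uniform probability measure on $[0,1]$), which yields
\begin{equation*}
|b_0| = \left| \int_0^1 P(\vartheta)\, {\rm d}\vartheta \right| \leqslant \int_0^1 |P(\vartheta)|\, {\rm d}\vartheta,
\end{equation*}
precisely the claimed inequality \eqref{010}. There is no real obstacle here: both ingredients are standard, and the finiteness of the sum means one does not even need to invoke any convergence subtlety when swapping sum and integral. The only ``choice'' is whether to phrase the second step as triangle inequality or Jensen; I would use the triangle inequality formulation since it keeps the argument purely elementary.
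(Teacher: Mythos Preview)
Your proof is correct and is essentially identical to the paper's own argument: the paper simply writes the one-line chain
\[
\int_{0}^{1}\Big|b_0+\sum_{k=1}^n e^{i2\pi k\vartheta}b_k\Big|\,{\rm d}\vartheta \geqslant \Big|\int_{0}^{1}\Big(b_0+\sum_{k=1}^n e^{i2\pi k\vartheta}b_k\Big)\,{\rm d}\vartheta\Big| = |b_0|,
\]
which is exactly your two steps (triangle inequality for integrals, then orthogonality) compressed into one display.
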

\begin{proof}[Proof]
Directly, we have
$$
\int_{0}^1 \big|b_0+\sum_{k=1}^n {\rm e}^{i2 \pi k \vartheta}b_k \big| {\rm d}\vartheta \geqslant \bigg|\int_{0}^1\bigg( b_0+\sum_{k=1}^n {\rm e}^{i2 \pi k \vartheta}b_k \bigg) {\rm d}\vartheta\bigg| = |b_0|.
$$
\end{proof}
\begin{lemma}\label{easy_supermart_Lemma}
    Suppose that the real sequence of random variables and   $\sigma$-algebras $ \{(Z_n, \mathcal{F}_n) \}_{n\geqslant 0}$ 
    is a supermartingale. Then, for every $n,m $ integers such that $n\geqslant m$, we have 
    $$
    \mathbb{E}[Z_n \, | \, \mathcal{F}_m] \leqslant Z_m.
    $$
\end{lemma}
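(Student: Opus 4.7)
The plan is to prove the inequality by induction on $k := n - m \geqslant 0$. The base case $k = 0$ is immediate, since $Z_m$ is $\mathcal{F}_m$-measurable and therefore $\mathbb{E}[Z_m \mid \mathcal{F}_m] = Z_m$. For the inductive step, I would assume that $\mathbb{E}[Z_{m+k} \mid \mathcal{F}_m] \leqslant Z_m$ almost surely, and then use the tower property of conditional expectation recalled in the Notation subsection, namely $\mathbb{E}[\mathbb{E}[\,\cdot \mid \mathcal{F}_{m+k}] \mid \mathcal{F}_m] = \mathbb{E}[\,\cdot \mid \mathcal{F}_m]$, together with the fact that conditional expectation is monotone with respect to almost sure inequalities.

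More precisely, I would write
$$\mathbb{E}[Z_{m+k+1} \mid \mathcal{F}_m] = \mathbb{E}\bigl[\mathbb{E}[Z_{m+k+1} \mid \mathcal{F}_{m+k}] \,\big|\, \mathcal{F}_m\bigr].$$
By the supermartingale property $\mathbb{E}[Z_{m+k+1} \mid \mathcal{F}_{m+k}] \leqslant Z_{m+k}$ almost surely, and so by monotonicity of conditional expectation the right-hand side is bounded above almost surely by $\mathbb{E}[Z_{m+k} \mid \mathcal{F}_m]$, which by the inductive hypothesis is at most $Z_m$. This closes the induction and yields the claim.

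There is no serious obstacle here: the only subtle points are (i) that the supermartingale inequality is an almost sure statement, so the induction produces a countable union of null sets which is still null, and (ii) that the monotonicity of conditional expectation must be applied to the almost sure inequality, both of which are standard.
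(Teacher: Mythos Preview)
Your proof is correct and is exactly the standard induction argument for this elementary fact. The paper itself does not give a proof but simply cites Theorem~10.2.1 in Gut's textbook, so your argument is essentially the one that reference would supply.
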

\begin{proof}[Proof]
    See theorem 10.2.1 in \cite{Gut}.
\end{proof}
\begin{lemma}{(Doob's inequality).}\label{doob} Let $a>0$. Suppose that the real sequence of random variables and $\sigma$-algebras $ \{(Z_n, \mathcal{F}_n) \}_{n\geqslant 0}$ is a nonnegative submartingale (resp. supermartingale). Then
$$\mathbb{P}\bigg[\max_{0 \leqslant   j \leqslant   n}Z_{j} >a\bigg] \leqslant   \frac{\mathbb{E}\big[Z_n\big]}{a} \, \bigg( \text{resp. } \mathbb{P}\bigg[\max_{0 \leqslant   j \leqslant   n}Z_{j} >a\bigg] \leqslant   \frac{\mathbb{E}[Z_0]}{a}   \bigg) .$$
\end{lemma}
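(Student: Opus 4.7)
The plan is to derive both inequalities via the classical stopping-time argument, with the supermartingale case requiring an extra optional-stopping step beyond what suffices for the submartingale case. First I would introduce the stopping time $\tau := \min\{j \in \{0,1,\ldots,n\} : Z_j > a\}$, setting $\tau := n+1$ if the minimum is empty, observe that $\{\max_{0 \leqslant j \leqslant n} Z_j > a\} = \{\tau \leqslant n\}$, and decompose this event into the disjoint union of $A_j := \{\tau = j\} \in \mathcal{F}_j$ for $j = 0,\ldots,n$. The elementary identity common to both parts is that on $A_j$ one has $Z_j > a$, hence $a\, \mathbb{1}_{A_j} \leqslant Z_j \mathbb{1}_{A_j}$.

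For the submartingale bound, the key step is that $A_j \in \mathcal{F}_j$ combined with iterating the submartingale defining inequality $(n-j)$ times yields $\mathbb{E}[Z_n \mid \mathcal{F}_j] \geqslant Z_j$, whence
$$ \mathbb{E}[Z_n \mathbb{1}_{A_j}] \geqslant \mathbb{E}[Z_j \mathbb{1}_{A_j}] \geqslant a\, \mathbb{P}(A_j). $$
Summing over $j = 0, \ldots, n$ and using nonnegativity of $Z_n$ on the complement of $\{\tau \leqslant n\}$ gives
$$ a\, \mathbb{P}\bigl(\max_{0\leqslant j\leqslant n} Z_j > a\bigr) \leqslant \mathbb{E}\bigl[ Z_n \mathbb{1}_{\{\tau\leqslant n\}} \bigr] \leqslant \mathbb{E}[Z_n], $$
which is the claimed inequality.

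For the supermartingale case the conditional inequalities point the opposite way, so the control by $\mathbb{E}[Z_n]$ is lost; instead one must exploit the initial value $\mathbb{E}[Z_0]$. My plan is to introduce the truncated stopping time $\tau_n := \tau \wedge n$ and to check that the stopped process $(Z_{\tau_n \wedge k})_{k \geqslant 0}$ remains a supermartingale with respect to $(\mathcal{F}_k)_{k \geqslant 0}$; this reduces to noting that $\{\tau_n > k\} \in \mathcal{F}_k$ and that $Z_{\tau_n \wedge (k+1)} - Z_{\tau_n \wedge k} = \mathbb{1}_{\{\tau_n > k\}}(Z_{k+1} - Z_k)$, so the conditional expectation of the increment given $\mathcal{F}_k$ is $\leqslant 0$. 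Applying Lemma~\ref{easy_supermart_Lemma} to this stopped supermartingale between indices $0$ and $n$ then gives $\mathbb{E}[Z_{\tau_n}] \leqslant \mathbb{E}[Z_0]$. Splitting according to whether $\tau \leqslant n$ or not, using nonnegativity of $Z_{\tau_n}$ on $\{\tau > n\}$ and the fact that $Z_{\tau_n} = Z_\tau > a$ on $\{\tau \leqslant n\}$, yields
$$ a\, \mathbb{P}(\tau \leqslant n) \leqslant \mathbb{E}\bigl[ Z_{\tau_n} \mathbb{1}_{\{\tau\leqslant n\}} \bigr] \leqslant \mathbb{E}[Z_{\tau_n}] \leqslant \mathbb{E}[Z_0], $$
which is the desired bound. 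The only genuinely delicate step in the whole argument is the verification that the stopped process is a supermartingale for the original filtration; everything else is routine bookkeeping of indicator functions.
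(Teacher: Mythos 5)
Your argument is correct. Note, however, that the paper does not prove this lemma at all: it simply cites Theorem 9.1 of Gut's textbook, so there is no internal proof to compare against. What you have written out is precisely the classical stopping-time proof that such a citation stands for: for the submartingale half, the decomposition over the disjoint events $A_j=\{\tau=j\}\in\mathcal{F}_j$ together with $\mathbb{E}[Z_n\mid\mathcal{F}_j]\geqslant Z_j$ and nonnegativity of $Z_n$ off $\{\tau\leqslant n\}$; for the supermartingale half, the verification that the stopped process $(Z_{\tau\wedge n\wedge k})_k$ is again a supermartingale (your increment identity with $\{\tau_n>k\}\in\mathcal{F}_k$ is the right way to see this, and integrability of the stopped process is immediate since it is dominated by $\max_{0\leqslant k\leqslant n}|Z_k|$), followed by an appeal to Lemma~\ref{easy_supermart_Lemma} to get $\mathbb{E}[Z_{\tau\wedge n}]\leqslant\mathbb{E}[Z_0]$ and then the split over $\{\tau\leqslant n\}$ and its complement. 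All steps check out, including the strict inequality $Z_\tau>a$ on $\{\tau\leqslant n\}$ matching the event in the statement. The only thing your write-up buys beyond the paper is self-containedness; conversely, it is worth observing that the same stopping-time mechanism is what the author later reproduces by hand, in a two-parameter setting, in the proof of Lemma~\ref{doobspecialcase}, so your argument is also consistent in spirit with how the paper handles its new Doob-type inequality.
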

\begin{proof}[Proof]
See theorem 9.1 in \cite{Gut}.
\end{proof}
\begin{lemma}{(Doob's $L^r$-inequality).}\label{doob2}
Let $r>1$. Suppose that the sequence of real random variables and $\sigma$-algebras $\{(Z_n,\mathcal{F}_n) \}_{ 0\leqslant   n \leqslant   N}$ is nonnegative submartingale bounded in $L^r$. Then 
$$ \mathbb{E}\bigg[ \big(\max_{0 \leqslant   n \leqslant   N } Z_n\big)^r\bigg]\leqslant   \bigg( \frac{r}{r-1}\bigg)^r \max_{0 \leqslant   n \leqslant   N}\mathbb{E}\big[ Z_n^r\big].$$
\end{lemma}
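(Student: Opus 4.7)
The plan is to combine the layer-cake formula for the $r$-th moment with a strengthened form of Doob's maximal inequality, and then close up via Hölder's inequality. Set $M_N := \max_{0 \leqslant n \leqslant N} Z_n$.

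First I would upgrade Lemma~\ref{doob} to the sharper one-sided estimate $a\, \mathbb{P}[M_N \geqslant a] \leqslant \mathbb{E}[Z_N\, \mathbf{1}_{\{M_N \geqslant a\}}]$ valid for every $a > 0$. This follows from the same stopping-time argument that proves Lemma~\ref{doob}: stop the submartingale at $\tau := \min\{n \leqslant N : Z_n \geqslant a\} \wedge N$, which satisfies $Z_\tau \geqslant a$ on the event $\{M_N \geqslant a\}$, and apply the submartingale inequality $\mathbb{E}[Z_\tau\,\mathbf{1}_{\{M_N \geqslant a\}}] \leqslant \mathbb{E}[Z_N\,\mathbf{1}_{\{M_N \geqslant a\}}]$.

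Next I would use the layer-cake representation $\mathbb{E}[M_N^r] = r \int_0^\infty a^{r-1} \mathbb{P}[M_N \geqslant a]\,\mathrm{d}a$, plug in the strengthened maximal bound, and apply Fubini's theorem to the integral $\int_0^\infty a^{r-2}\, \mathbf{1}_{\{M_N \geqslant a\}}\,\mathrm{d}a = M_N^{r-1}/(r-1)$. This produces the intermediate inequality $\mathbb{E}[M_N^r] \leqslant \tfrac{r}{r-1}\, \mathbb{E}[Z_N\, M_N^{r-1}]$. Hölder's inequality with conjugate exponents $r$ and $r/(r-1)$ then gives $\mathbb{E}[Z_N\, M_N^{r-1}] \leqslant \mathbb{E}[Z_N^r]^{1/r}\, \mathbb{E}[M_N^r]^{(r-1)/r}$; dividing and raising to the $r$-th power yields $\mathbb{E}[M_N^r] \leqslant (r/(r-1))^r\, \mathbb{E}[Z_N^r]$.

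The only subtle point, and the one step I expect to require care, is that the division step is only legitimate once $\mathbb{E}[M_N^r]$ is known to be finite — a priori we only know each $Z_n \in L^r$. I would circumvent this by truncating: replace $M_N$ with $Y_K := M_N \wedge K$, which is bounded by $K$ and hence trivially in $L^r$. All the manipulations above go through with $M_N$ replaced by $Y_K$ (the integrand $\mathbf{1}_{\{Y_K \geqslant a\}}$ simply vanishes for $a > K$, and the Fubini computation gives $Y_K^{r-1}/(r-1)$), so one obtains $\mathbb{E}[Y_K^r]^{1/r} \leqslant \tfrac{r}{r-1}\, \mathbb{E}[Z_N^r]^{1/r}$ for every finite $K$; letting $K \to \infty$ by monotone convergence delivers the unrestricted bound. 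To match the form of the lemma, note finally that since $x \mapsto x^r$ is convex and nondecreasing and $(Z_n)$ is a nonnegative submartingale, Jensen's inequality makes $(Z_n^r)$ a submartingale, so $n \mapsto \mathbb{E}[Z_n^r]$ is nondecreasing and $\mathbb{E}[Z_N^r] = \max_{0 \leqslant n \leqslant N} \mathbb{E}[Z_n^r]$.
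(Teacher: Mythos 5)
Your argument is correct and complete; the paper does not prove this lemma itself but simply cites Theorem 9.4 in Gut's textbook, and your layer-cake/stopping-time/H\"older argument is precisely the standard proof given there. (One small simplification: since only finitely many $Z_n\in L^r$ appear, $\max_{0\leqslant n\leqslant N}Z_n$ is automatically in $L^r$, so the truncation step, while harmless, is not strictly needed.)
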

\begin{proof}[Proof]
See theorem 9.4 in \cite{Gut}.
\end{proof}
\begin{lemma}\label{gaussian_approximation}
Let $a\geqslant 1$. For any integer $n \geqslant 1$, let $G_1,...,G_n$ be an independent real Gaussian random variables, each having mean 0 and variance between $\frac{1}{20}$ and $20$. Then
$$
\mathbb{P}\bigg[  \sum_{m=1}^k G_m \leqslant  a+2 \log k + O(1) \text{ for all } k \leqslant  n  \bigg] \asymp \min\bigg\{1,\frac{a}{\sqrt{n}}\bigg\}.
$$
\end{lemma}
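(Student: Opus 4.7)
The statement is a ballot-type estimate for a Gaussian random walk against the logarithmically curved upper barrier $a + 2\log k + O(1)$. The essential point is that the slowly growing correction $2\log k$ preserves the classical ballot asymptotic $\min(1, a/\sqrt n)$ known for a flat barrier at height $a$. The hypothesis $\mathrm{Var}(G_m) \in [1/20, 20]$ only affects the variance of partial sums $S_k = \sum_{m\leqslant k} G_m$ by bounded multiplicative factors, and (modulo changing the coefficient in front of $\log k$ by a bounded factor, which is known to be harmless for this type of barrier estimate) can be absorbed into the constants implicit in $\asymp$; so I may assume $(G_m)$ are iid $\mathcal{N}(0,1)$. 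When $a \geqslant C\sqrt n$ for a large absolute constant $C$, the right-hand side is $\asymp 1$, and the event is implied by $\{\max_{k\leqslant n}|S_k|\leqslant a\}$, which has probability bounded below by Doob's $L^2$-inequality applied to the martingale $(S_k)$. Hence I focus on $1 \leqslant a \leqslant \sqrt n$.

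For the lower bound, since $2\log k \geqslant 0$ for $k \geqslant 1$, the event in question contains $\{S_k \leqslant a + O(1)\ \forall k \leqslant n\}$ (at worst at the cost of a constant factor absorbed into $\asymp$). By the reflection principle (equivalently, Bachelier's formula after a Brownian approximation of the walk), this flat-barrier ballot event has probability $\asymp a/\sqrt n$ throughout the range $1 \leqslant a \leqslant \sqrt n$, yielding the required lower bound.

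The upper bound is the substantive step. I would condition on $S_n = s$ and write
\begin{equation*}
\mathbb{P}[\mathrm{event}] = \int \frac{e^{-s^2/(2n)}}{\sqrt{2\pi n}}\,\mathbb{P}\bigl[S_k \leqslant a + 2\log k + O(1)\ \forall k \leqslant n \,\big|\, S_n = s\bigr]\, ds,
\end{equation*}
interpreting the conditional law as a discrete Gaussian bridge from $0$ to $s$. Applying the bridge reflection principle on dyadic windows $[n_{j-1}, n_j]$ with $n_j = 2^j$, the barrier is essentially flat on each window at height $M_j := a + 2j\log 2$, and the reflection formula yields on each window a multiplicative factor of the form $1 - \exp\bigl(-2(M_j - S_{n_{j-1}})(M_j - S_{n_j})/(n_j - n_{j-1})\bigr)$, controlled by the bridge values at the waypoints, which are themselves constrained to lie below $M_j + O(1)$. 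Summing over the waypoint positions (Gaussians with variances $\asymp n_j$) and integrating out $s$ telescopes the logarithmic factors against the Gaussian density factors, yielding $O(a/\sqrt n)$.

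The main obstacle is the careful bookkeeping in this dyadic scheme. A naive upper bound via the flat barrier at the endpoint height $a + 2\log n$ gives only $\asymp (a + \log n)/\sqrt n$, losing a spurious $\log n$ when $a$ is small, so the logarithmic shape of the barrier must be exploited at every scale. An alternative route I would also consider is the Lamperti time change $u = \log t$, which transforms Brownian motion against a $2\log t$ barrier into an Ornstein--Uhlenbeck process against a constant barrier, reducing the upper bound to a standard first-passage estimate for the OU process.
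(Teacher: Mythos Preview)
The paper does not prove this lemma at all: its entire ``proof'' is the single sentence ``This is Probability Result 1 in \cite{Harper}'', i.e.\ it is quoted as a black box from Harper's low-moments paper. So there is no argument in the paper to compare your sketch against.

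That said, your sketch is broadly along the lines of how such barrier estimates are actually established (and indeed how Harper proves Probability Result~1): the lower bound comes for free by dropping the $2\log k$ term and invoking the flat-barrier ballot problem, while the upper bound requires genuinely exploiting the logarithmic curvature of the barrier, typically via a dyadic or multi-scale decomposition together with Gaussian/reflection computations at each scale. Your remark that a naive flat barrier at height $a+2\log n$ loses a $\log n$ is exactly the reason the argument is nontrivial. Two points of caution: first, the reduction from variances in $[1/20,20]$ to iid $\mathcal N(0,1)$ is not quite as innocent as you suggest---one cannot simply ``absorb'' inhomogeneous variances by rescaling without disturbing the barrier shape, and in Harper's treatment the inhomogeneity is carried through the argument rather than removed at the outset. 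Second, your description of the upper bound (``telescopes the logarithmic factors against the Gaussian density factors'') is a placeholder rather than an argument; the actual bookkeeping over the waypoint positions is the crux of the proof and takes real work. But since the paper itself defers entirely to the citation, there is nothing further to compare.
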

\begin{proof}[Proof]
This is Probability Result 1 in \cite{Harper}.
\end{proof}
\subsection{New Tools.}
\noindent We have the following version of Azuma/Hoeffding's inequality. 
\begin{lemma}\label{hoeffding}
Let $Z=(Z_n)_{1 \leqslant   n \leqslant   N}$ be a complex martingale difference sequence with respect to a filtration $(\mathcal{F}_n)_{1 \leqslant  n \leqslant  N }$.
We assume that for each $n$, $Z_n$ is bounded almost surely.
Furthermore, assume that we have $|Z_n| \leqslant   S_n$ almost surely, where $(S_n)_{1 \leqslant  n \leqslant  N }$ is a real predictable process with respect to the same filtration (i.e for each $n$, $S_n$ is $\mathcal{F}_{n-1}$-measurable). Assume that $\sum_{1\leqslant   n \leqslant   N}S_n^2 \leqslant   T$ almost surely where $T$ is a  deterministic constant.
Then, for any $\varepsilon >0$, we have
$$ \mathbb{P}\bigg[\bigg|\sum_{1\leqslant   n \leqslant   N}Z_n\bigg|\geqslant \varepsilon\bigg] \leqslant   2\exp\bigg(\frac{-\varepsilon^2}{10T}\bigg).$$
\end{lemma}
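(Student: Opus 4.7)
The plan is to reduce to the classical real-valued Azuma--Hoeffding inequality by splitting $Z_n$ into real and imaginary parts. Writing $Z_n = X_n + iY_n$, the martingale difference hypothesis $\mathbb{E}[Z_n\mid \mathcal{F}_{n-1}] = 0$ separates into $\mathbb{E}[X_n\mid \mathcal{F}_{n-1}] = 0$ and $\mathbb{E}[Y_n\mid \mathcal{F}_{n-1}] = 0$, so both $(X_n)$ and $(Y_n)$ are real martingale difference sequences with respect to the same filtration. The pointwise bound $|Z_n|\leq S_n$ transfers to $|X_n|,|Y_n|\leq S_n$, so both real sequences inherit the predictable envelope $S_n$ and the budget $T$. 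The triangle-type inclusion
\[
\Bigl\{\Bigl|\sum_n Z_n\Bigr| \geq \varepsilon\Bigr\} \subseteq \Bigl\{\Bigl|\sum_n X_n\Bigr| \geq \varepsilon/\sqrt{2}\Bigr\} \cup \Bigl\{\Bigl|\sum_n Y_n\Bigr| \geq \varepsilon/\sqrt{2}\Bigr\},
\]
which follows from $|\sum Z_n|^2 = |\sum X_n|^2 + |\sum Y_n|^2$, then reduces the whole statement to a real-valued tail bound.

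For each real sequence I would deploy the exponential supermartingale trick. The key input is a conditional form of Hoeffding's lemma: if $|X_n|\leq S_n$ almost surely with $S_n$ being $\mathcal{F}_{n-1}$-measurable, and $\mathbb{E}[X_n\mid \mathcal{F}_{n-1}]=0$, then bounding $x\mapsto \mathrm{e}^{\lambda x}$ on $[-S_n,S_n]$ by its linear interpolant and taking conditional expectation gives
\[
\mathbb{E}\bigl[\mathrm{e}^{\lambda X_n} \,\bigm|\, \mathcal{F}_{n-1}\bigr] \leq \cosh(\lambda S_n) \leq \mathrm{e}^{\lambda^2 S_n^2/2} \quad\text{a.s.}
\]
Because $S_n$ is predictable, this makes
\[
M_n := \exp\!\Bigl(\lambda \sum_{k=1}^n X_k - \tfrac{\lambda^2}{2}\sum_{k=1}^n S_k^2\Bigr)
\]
a nonnegative supermartingale with $M_0=1$, so $\mathbb{E}[M_N]\leq 1$. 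The almost-sure bound $\sum_n S_n^2 \leq T$ then lets me factor out the deterministic piece and obtain $\mathbb{E}[\mathrm{e}^{\lambda\sum X_n}] \leq \mathrm{e}^{\lambda^2 T/2}$. Markov's inequality with the optimal choice $\lambda = a/T$ yields $\mathbb{P}[\sum X_n \geq a]\leq \mathrm{e}^{-a^2/(2T)}$, and running the same argument for $-X_n$ produces the two-sided estimate $\mathbb{P}[|\sum X_n| \geq a]\leq 2\mathrm{e}^{-a^2/(2T)}$, and identically for $Y_n$.

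Assembling the pieces with $a = \varepsilon/\sqrt{2}$ gives the intermediate bound
\[
\mathbb{P}\Bigl[\Bigl|\sum_n Z_n\Bigr| \geq \varepsilon\Bigr] \leq 4\,\mathrm{e}^{-\varepsilon^2/(4T)}.
\]
The target bound $2\mathrm{e}^{-\varepsilon^2/(10T)}$ then follows from a trivial case split: when $\varepsilon^2 \leq 10T\log 2$ the right-hand side is at least $1$ and the statement is vacuous, while for $\varepsilon^2 > 10T\log 2$ an elementary comparison of exponents shows $4\mathrm{e}^{-\varepsilon^2/(4T)} \leq 2\mathrm{e}^{-\varepsilon^2/(10T)}$, so the generous constant $10$ absorbs the loss from splitting into real and imaginary parts. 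The only genuinely non-routine step I anticipate is the conditional Hoeffding estimate with a random, $\mathcal{F}_{n-1}$-measurable envelope $S_n$ in place of a deterministic one; this is where the predictability assumption must be used, and it is worth emphasizing because without it the supermartingale $M_n$ would not be well-defined from the conditional Laplace bound alone.
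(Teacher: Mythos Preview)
Your proof is correct and takes a genuinely different route from the paper's. The paper follows Pinelis's method: it works directly with the complex modulus by studying $H_n(t)=\mathbb{E}_{n-1}[\cosh(\lambda|g_{n-1}+tZ_n|)]$, shows the differential inequality $H_n''(t)\leq 5\lambda^2 S_n^2 H_n(t)$ together with $H_n'(0)=0$, and then invokes a comparison lemma to conclude that $G_n:=\exp(-\tfrac{5\lambda^2}{2}\sum_{k\leq n}S_k^2)\cosh(\lambda|g_n|)$ is a supermartingale; Doob's inequality and the choice $\lambda=\varepsilon/(5T)$ give the stated bound. You instead reduce to the real-valued Azuma--Hoeffding inequality by splitting into real and imaginary parts, which is more elementary and in fact yields the sharper intermediate estimate $4\exp(-\varepsilon^2/(4T))$ before you relax it to match the paper's constant $10$. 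The trade-off is that the paper's $\cosh(\lambda|g_n|)$ approach is intrinsic to the norm and generalises cleanly (as in Pinelis's framework) to Banach-space-valued martingales, whereas your splitting argument is tied to the two-dimensional Euclidean structure; for the present complex-valued statement, however, your argument is both shorter and quantitatively stronger.
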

\begin{proof}[Proof]
We define the conditional expectation $\mathbb{E}_{n}[\,.\,]:= \mathbb{E}[\,.\,| \, \mathcal{F}_n]$. Following the proof of theorem 3 in \cite{Iosif}, we define,  $g_n:= \sum_{k=1}^n Z_k$ with $g_0:=0$. We set $Z_0:=0$ and $S_0:=0$. 
Let's define now the following function, for each $n \geqslant 1$, $\lambda>0$ and $t\geqslant 0$
$$ H_n(t):= \mathbb{E}_{n-1}\Big[ \cosh{ \big(\lambda |g_{n-1}+tZ_n| \big)} \Big],$$
where $\cosh{x}:= \frac{{\rm e}^x + {\rm e}^{-x}}{2}$. Note that $H_n \geqslant 0$. We have for any positive differentiable function $u$,
$$
\begin{aligned}
(\cosh u)'' & =u'^2 \cosh u+u'' \sinh u 
\\ & \leqslant (\cosh u )(u'^2 + |u''|u).
\end{aligned}
$$
Here, we used the following inequality
$$
\sinh u \leqslant u \cosh u.
$$
Thus, by taking $u$ to be $\lambda |g_{n-1}+t Z_n |$ we get $u'^2 + |u''|u$ is less than
\begin{equation}\label{010010}
\lambda^2 \Bigg( 2\bigg(\frac{\Re(Z_n)\big( \Re(g_{n-1})+t \Re(Z_n) \big)+ \Im(Z_n)\big( \Im(g_{n-1})+t \Im(Z_n) \big)  }{|g_{n-1} + t Z_n|} \bigg)^2 + |Z_n|^2 \Bigg).    
\end{equation}
By using, $\frac{|\Re(g_{n-1})+t \Re(Z_n)|}{|g_{n-1} + t Z_n|},\frac{|\Im(g_{n-1})+t \Im(Z_n)|}{|g_{n-1} + t Z_n|} \leqslant 1$,
we get then \eqref{010010} is less than
\begin{equation}
   \lambda^2 \bigg( 2\Big(|\Re(Z_n)|+ |\Im(Z_n)| \Big)^2 + |Z_n|^2 \bigg)  
\end{equation}
and since $|\Re(Z_n)|+ |\Im(Z_n)| \leqslant \sqrt{2}|Z_n|$, we get at the end
\begin{equation}\label{eq0101}
    \begin{aligned}
  H_n''(t)  & \leqslant   \mathbb{E}_{n-1}\Big[ 5\lambda^2 |Z_n|^2 \cosh{\big(\lambda |g_{n-1}+tZ_n|}\big)\Big]
  \\ & \leqslant  5 \lambda^2 S_n^2\mathbb{E}_{n-1}\Big[  \cosh{\big(\lambda |g_{n-1}+tZ_n|}\big)\Big]
  \\ &= 5\lambda^2 S_n^2 H_n(t).
\end{aligned}
\end{equation}
Since $\mathbb{E}_{n-1}\big[  Z_n \big]=0$ and $g_{n-1}$ is $\mathcal{F}_{n-1}$-measurable, we have 
$$
\begin{aligned}
H_n'(0) &= \lambda \mathbb{E}_{n-1}\bigg[ \big( \Re (Z_n) \Re (g_{n-1}) + \Im (Z_n) \Im (g_{n-1})\big) \frac{\sinh{ (\big(\lambda |g_{n-1}|\big)}}{|g_{n-1}|}  \bigg]
\\&= \lambda  \bigg( \mathbb{E}_{n-1}\big[\Re (Z_n)\big] \Re (g_{n-1}) + \mathbb{E}_{n-1}\big[\Im (Z_n)\big] \Im (g_{n-1})\bigg) \frac{\sinh{ (\big(\lambda |g_{n-1}|\big)}}{|g_{n-1}|}  
\\&=0.
\end{aligned}
$$  
Now by lemma 3 in \cite{Iosif}, we get then
$$ H_n(t)  \leqslant   H_n(0)  \exp{ \bigg(\frac{5}{2}t^2\lambda^2 S_n^2\bigg)}.$$
In particular
$$
\begin{aligned}
H_n(1)=\mathbb{E}_{n-1} \big[\cosh{ (\lambda |g_{n}|)}\big]  & \leqslant   \mathbb{E}_{n-1} \big[\cosh{ (\lambda |g_{n-1}|)}\big]  \exp{ \bigg(\frac{5}{2}\lambda^2 S_n^2 \bigg)}
\\& = \cosh{ (\lambda |g_{n-1}|)}  \exp{ \bigg(\frac{5}{2}\lambda^2 S_n^2 \bigg)}.
\end{aligned} $$
Let's define now the following sequence, for each $n\geqslant 0$
$$ G_n:= \exp \bigg(\frac{- 5\lambda^2}{2} \sum_{k=0}^n S_k^2 \bigg) \cosh{ (\lambda |g_{n}|)} . $$
Since by assumption, $S_n$ is $\mathcal{F}_{n-1}$-measurable, we get then 
$$ \begin{aligned}
  \mathbb{E}_{n-1}\big[G_n\big] & = \mathbb{E}_{n-1}\bigg[\exp \bigg(\frac{-5 \lambda^2}{2} \sum_{k=0}^n S_k^2 \bigg) \cosh{ (\lambda |g_{n}|)} \bigg]
  \\ & =\exp \bigg(\frac{- 5\lambda^2 }{2}\sum_{k=0}^n S_k^2 \bigg)  \mathbb{E}_{n-1}\big[ \cosh{( \lambda |g_{n}|)} \big]
  \\ & \leqslant   \exp\bigg(\frac{- 5\lambda^2}{2} \sum_{k=0}^n S_k^2 \bigg) \exp\bigg( \frac{5\lambda^2}{2}S_n^2 \bigg) \cosh{ (\lambda |g_{n-1}|)}
  \\ & = G_{n-1}.
\end{aligned}  $$
We deduce then that $G_n$ is supermartingale with $\mathbb{E}G_0=1$. Thus by Doob's inequality, we have.
$$ \begin{aligned}
  \mathbb{P}\bigg[|g_N| \geqslant \varepsilon \bigg] & \leqslant   \mathbb{P}\bigg[\sup_{0 \leqslant   n \leqslant   N}|g_n| \geqslant \varepsilon \bigg]
  \\ & \leqslant   \mathbb{P}\bigg[\sup_{0 \leqslant   n \leqslant   N}G_n \geqslant \exp\bigg( \frac{-5\lambda^2T}{2}\bigg) \cosh{\lambda \varepsilon} \bigg]
  \\ & \leqslant   \frac{\exp\big( \frac{5\lambda^2T}{2}\big)}{\cosh{\lambda \varepsilon}}  \mathbb{E}G_0
  \\ & \leqslant   2\exp\bigg( \frac{5\lambda^2T}{2} - \lambda \varepsilon\bigg).
\end{aligned}$$
By choosing $\lambda$ to be $\frac{\varepsilon}{5T} $, we get the result. 
\end{proof}
\begin{lemma}\label{updated_hoeffding_chapter_1}
    Let $Z=(Z_n)_{1 \leqslant   n \leqslant   N}$ be a complex martingale difference sequence with respect to a filtration $\mathcal{F}=(\mathcal{F}_n)_{1 \leqslant  n \leqslant  N }$.
    We assume that for each $n$, $Z_n$ is bounded almost surely (let's say $ |Z_n| \leqslant   b_n$ almost surely, where $b_n$ is some real number).
    Furthermore, assume that we have $|Z_n| \leqslant   S_n$ almost surely, where $(S_n)_{1 \leqslant  n \leqslant  N }$ is a real predictable process with respect to the same filtration. We set the event $\Sigma:= \bigg\{ \sum_{1\leqslant   n \leqslant   N}S_n^2 \leqslant   T\bigg\}$ where $T$ is a  deterministic constant.
    Then, for any $\varepsilon >0$,
    $$ \mathbb{P}\bigg[\bigg\{\bigg|\sum_{1\leqslant   n \leqslant   N}Z_n\bigg|\geqslant \varepsilon\bigg\}\, \bigcap \, \Sigma \bigg] \leqslant   2\exp\bigg(\frac{-\varepsilon^2}{10T}\bigg).$$
\end{lemma}
\begin{proof}[Proof]
    We define
    $$
    \widetilde{S}_n:= S_n \mathbb{1}_{\sum_{k=1}^n S_k^2 \leqslant T}
    $$
    and
    $$
    \widetilde{Z}_n := Z_n \mathbb{1}_{\sum_{k=1}^n S_k^2 \leqslant T}.
    $$
    It is clear that $(\widetilde{Z}_n)_{1 \leqslant n\leqslant N}$ is a martingale difference sequence which is almost bounded. Note that $(\widetilde{S}_n)_{1 \leqslant n\leqslant N}$ is predictable process with respect to the filtration $\mathcal{F}$ and\\ $\sum_{ 1\leqslant n \leqslant N} \widetilde{S}_n^2 \leqslant T$. Thus, all assumptions of Lemma \ref{hoeffding} are satisfied for $(\widetilde{Z}_n)_{1 \leqslant n\leqslant N}$ and $(\widetilde{S}_n)_{1 \leqslant n\leqslant N}$. 
    Note that under the condition $\Sigma$, we have
    $$
    \sum_{1\leqslant   n \leqslant   N} Z_n = \sum_{1\leqslant   n \leqslant   N}\widetilde{Z}_n.
    $$
    thus, by Lemma \ref{hoeffding}
     $$
     \begin{aligned}
         \mathbb{P}\bigg[\bigg\{ \bigg|\sum_{1\leqslant   n \leqslant   N}Z_n\bigg|\geqslant \varepsilon \bigg\} \bigcap \, \Sigma \bigg] & \leqslant \mathbb{P}\bigg[\bigg\{\bigg|\sum_{1\leqslant   n \leqslant   N}\widetilde{Z}_n\bigg|\geqslant \varepsilon \bigg\} \bigcap \Sigma \bigg] 
         \\ &  \leqslant \mathbb{P}\bigg[\bigg|\sum_{1\leqslant   n \leqslant   N}\widetilde{Z}_n\bigg|\geqslant \varepsilon \bigg]
         \leqslant   2\exp\bigg(\frac{-\varepsilon^2}{10T}\bigg) .
     \end{aligned}$$
\end{proof}

\section{Reduction of the problem}\label{section_reduction}

From now on, we indicate by $f$ the Steinhaus or Rademacher multiplicative function.
The goal of this section is to reduce the problem to something simpler to deal with.
We want to prove that the event 
$$ \mathcal{A}:=\big\{ |M_f(x)|> 4\sqrt{x}(\log_2 x )^{3/4 + \varepsilon}     \text{, for infinitely many }x\big\},$$
holds with null probability.
As in Lau--Tenenbaum--Wu \cite{Tenenbaum}, Basquin \cite{basquin}, in Mastrostefano \cite{Mastrostefano} and, in more general, in some proofs of the Law of the Iterated Logarithm (theorem 8.1 in \cite{Gut} for example), the idea is to assess the event $\mathcal{A}$ on a suitable sequence of test points. Without introducing any change, we keep the same test points as Lau--Tenenbaum--Wu in \cite{Tenenbaum}, lemma 2.3. We take $x_i:= \lfloor {\rm e}^{i^{c_{0}}} \rfloor$, where $c_0$ is a ``small constant'' in $]0,1[$ that will be chosen in the coming Lemma~\ref{Tenenbaum_lemma2}. As Mastrostefano in \cite{Mastrostefano} at the end of Section 2, we choose on the other hand $X_{\ell}:= {\rm e}^{2^{\ell ^{K}}}$, where $K:=\frac{25}{\varepsilon}$. Set 
$$ \mathcal{A}_{\ell} := \bigg\{ \sup_{X_{\ell-1} < x_{i-1} \leqslant   X_{\ell} } \sup_{x_{i-1} < x \leqslant   x_i} \frac{|M_f(x)|}{\sqrt{x}R(x)} > 4\bigg\} $$
where $R(x):= (\log_2 x )^{3/4 + \varepsilon}$. One can easily see that $\mathcal{A} \subset \cup_{\ell  \geqslant 1}\mathcal{A}_{\ell}$.
We have the following upper bound. 
Let $\overline{\mathcal{A}}_{\ell}$ be the complement of $\mathcal{A}_{\ell}$ in simple space.
\begin{lemma}\label{Tenenbaum_lemma2}
For any fixed constant $A>0$. Recall that $x_i=\lfloor{\rm e}^{i^{c_0}}\rfloor$. There exists $c_0=c_0(A)$ small enough such that, we have almost surely, as $x_i$ tends to infinity
\begin{equation}
    \max_{x_{i-1} <x \leqslant   x_i} |M_f(x)-M_f(x_{i-1})| \ll_A \frac{\sqrt{x_i}}{(\log x_i)^A} .
\end{equation}
\end{lemma}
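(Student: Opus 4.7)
The plan is to reduce to a pointwise $L^{2m}$ moment bound at integer $N \in (x_{i-1}, x_i]$, upgrade this to a bound on the maximum over $N$ by dyadic chaining, and conclude via Markov and Borel--Cantelli. Set $D_N := M_f(N) - M_f(x_{i-1}) = \sum_{x_{i-1} < n \leqslant N} f(n)$. Since $M_f$ is constant between consecutive integers, it suffices to control $\max_{N \in \mathbb{Z} \cap (x_{i-1}, x_i]} |D_N|$.

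For an integer $m \geqslant 2$ to be chosen later, Lemma~\ref{hypercontractivity} applied with $a_n = \mathbb{1}_{x_{i-1} < n \leqslant N}$ gives
\[
\mathbb{E}|D_N|^{2m} \leqslant \bigl(\sum_{x_{i-1} < n \leqslant N} \tau_{2m-1}(n)\bigr)^m.
\]
The sparsity of the test points yields $x_i - x_{i-1} \asymp c_0\, x_i (\log x_i)^{-(1-c_0)/c_0}$, which exceeds $x_i^{1-\delta}$ for every $\delta > 0$ once $i$ is large. Shiu's short-interval divisor estimate (or an elementary consequence of the classical asymptotic $\sum_{n \leqslant x} \tau_k(n) = x P_{k-1}(\log x) + O(x^{1-\delta_k})$) then gives $\sum_{x_{i-1} < n \leqslant x_i} \tau_{2m-1}(n) \ll_m (x_i - x_{i-1})(\log x_i)^{2m-2}$, and an analogous bound on any dyadic subinterval that is not too short; very short blocks are handled by the trivial estimate $|D_B| \leqslant |B|$ almost surely.

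The main step is passing from the pointwise bound to a bound on $\max_N |D_N|$ without the fatal factor $x_i - x_{i-1}$ that a naive union bound would incur. For this I would use a Rademacher--Menshov style dyadic chaining. Set $K := \lceil \log_2(x_i - x_{i-1}) \rceil$, and, for each integer $N \in (x_{i-1}, x_i]$, use the binary expansion of $N - x_{i-1}$ to telescope $D_N = \sum_{j=0}^{K-1} \Delta_j^{(N)}$ into at most $K$ sums over dyadic blocks of length $\leqslant 2^j$. The power-mean inequality $|D_N|^{2m} \leqslant K^{2m-1} \sum_j |\Delta_j^{(N)}|^{2m}$, combined with $\max_N |\Delta_j^{(N)}|^{2m} \leqslant \sum_\ell |D_{B_j^{(\ell)}}|^{2m}$ (summing over all dyadic blocks $B_j^{(\ell)}$ at level $j$) and the blockwise moment estimate above, yields
\[
\mathbb{E}\bigl[ \max_N |D_N|^{2m} \bigr] \leqslant K^{2m-1} \sum_{j=0}^{K-1} \sum_\ell \mathbb{E}\bigl|D_{B_j^{(\ell)}}\bigr|^{2m} \ll_m (x_i - x_{i-1})^m (\log x_i)^{2m^2 - 1},
\]
losing only a polylogarithmic factor compared with the pointwise bound.

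Finally, Markov's inequality with threshold $\sqrt{x_i}/(\log x_i)^A$, together with $\log x_i = i^{c_0}$, gives
\[
\mathbb{P}\bigl[ \max_N |D_N| > \sqrt{x_i}/(\log x_i)^A \bigr] \ll_m c_0^m\, i^{\,c_0(2m^2 + m + 2mA - 1) - m}.
\]
Taking $m = 2$ and $c_0 = c_0(A) < 1/(9 + 4A)$ makes this exponent strictly less than $-1$, so the probabilities are summable in $i$ and Lemma~\ref{borelcantelli} concludes the proof almost surely. The principal obstacle is the chaining step: preserving the short-interval gain through the passage to the maximum is essential, since a naive union bound over the $\asymp x_i - x_{i-1}$ integers in $(x_{i-1}, x_i]$ would overwhelm any hypercontractive moment estimate.
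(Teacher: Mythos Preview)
Your argument is correct and follows the strategy of the result the paper cites without proof (Lemma~2.3 of Lau--Tenenbaum--Wu~\cite{Tenenbaum}): a hypercontractive $2m$-th moment bound on sums over short blocks, a maximal inequality to pass from pointwise moments to $\mathbb{E}[\max_N|D_N|^{2m}]$, then Markov and Borel--Cantelli. Your Rademacher--Menshov chaining is a clean way to execute the maximal step and in fact yields a tighter admissible range $c_0<1/(9+4A)$ than the $c_0=1/350$ the paper records for $A=1$.

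One small caveat: your parenthetical alternative to Shiu --- the classical asymptotic with error $O(x^{1-\delta_k})$ --- only controls blocks of length $\geqslant x_i^{1-\delta_k}$, and for $m=2$ (so $k=3$, $\delta_3\approx 1/3$) the trivial bound $|D_B|\leqslant |B|$ is too weak on the intermediate range $x_i^{1/3}<L<x_i^{2/3}$: the level-$j$ contribution $\sum_\ell |D_{B_j^{(\ell)}}|^{2m}\leqslant (x_i-x_{i-1})L^{2m-1}$ there already exceeds the target $(x_i-x_{i-1})^m$. Shiu's theorem, valid for intervals of length $\geqslant x_i^{\alpha}$ for any fixed $\alpha>0$, closes this gap (take $\alpha<(m-1)/(2m-1)=1/3$ and use the trivial bound only below $x_i^{\alpha}$); alternatively, cover the intermediate blocks by hypercontractivity together with the pointwise bound $\tau_{2m-1}(n)\ll_\epsilon n^{\epsilon}$. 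With Shiu in place your chaining and the final exponent arithmetic are correct as written.
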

\begin{remark}
For $A=1$ we can take $c_0 =  \frac{1}{350}$. From now on, we take $c_0 \leqslant \frac{1}{10^3}$.
\end{remark}
\begin{proof}[Proof]
See lemma 2.3 in \cite{Tenenbaum}. Lau--Tenenbaum--Wu states the result for Rademacher case, but it can be extended easily to Steinhaus case by following the same arguments of the proof.
\end{proof}
\noindent Thus it suffices to prove that $\sum_{\ell \geqslant 1}\mathbb{P}[\mathcal{B}_{\ell}] < +\infty$ where
$$\mathcal{B}_{\ell} := \bigg\{ \sup_{X_{\ell -1} < x_{i} \leqslant   X_{\ell} }  \frac{|M_f(x_i)|}{\sqrt{x_i}R(x_i)} > 3\bigg\}. $$
\section{Upper bound of \texorpdfstring{$\mathbb{P}[\mathcal{B}_{\ell}]$}.}
\subsection{Setting up the model}

In this subsection, we give the basic idea of the approach that we are going to follow. Arguing as Lau--Tenenbaum--Wu in \cite{Tenenbaum} in the proof of lemma 3.1, with a little change in the variables, let $x_i \in ]X_{\ell -1},X_{\ell}]$, we take 
$$ y_0=\exp \bigg(2^{\ell^{K}(1-K/\ell)}\bigg)=\exp\bigg\{(\log X_{\ell})^{1-K/\ell} \bigg\} \text{ and } y_j=\exp\bigg\{{\rm e}^{j/\ell}(\log X_{\ell})^{1-K/\ell} \bigg\}.$$
Let $J$ be minimal under the constraint $y_J \geqslant   X_{\ell}$ which means $J_{\ell}=J:=\lceil K \ell^{K} \log 2 \rceil \ll ~\ell^{K}$. Note that  $\ell^{K} =\frac{1}{\log 2} \log_2 X_{\ell} \asymp  \log_2 x_i\asymp \log_2 y_j $ for any $x_i \in ]X_{\ell -1},X_{\ell}]$ and~$~1\leqslant~  j\leqslant ~J$. \\
We start by splitting $M_f(x_i)$ according to the size of the largest prime factor $P(n)$ of $n$. Let $$\Psi_f(x,y):=\sum_{\substack{n \leqslant  x \\ P(n)\leqslant  y}}f(n) \, \, \text{ and } \,\, \Psi^{\prime}_f(x,y):=\sum_{\substack{n \leqslant  x \\ P(n) < y}}f(n).$$ 
We have
$$M_f(x_i)=\Psi_f(x_i,y_0)+M_f^{(1)}(x_i)+M_f^{(2)}(x_i)  $$
with 
$$M_f^{(1)}(x_i):= \sum_{ y_0 < p  \leqslant  y_J} Y_p$$
where
$$
Y_p:=f(p) \Psi^{\prime}_f(x_i/p,p)
$$
and
$$
M_f^{(2)}(x_i):=\sum_{j=1}^J \sum_{\substack{y_{j-1}^2 < d \leqslant  x_i \\ v_{P(d)}(d) \geqslant 2}}^{y_{j-1},y_j} f(d) \Psi_f(x_i/d,y_{j-1}).
$$
One can see that $\mathcal{B}_{\ell} \subset \mathcal{B}_{\ell}^{(0)} \cup \mathcal{B}_{\ell}^{(1)}\cup \mathcal{B}_{\ell}^{(2)}$ where,
$$ \mathcal{B}_{\ell}^{(0)} :=\bigcup_{X_{\ell -1} < x_{i} \leqslant   X_{\ell} } \bigg\{   |\Psi_f(x_i,y_0)|>\sqrt{x_i}R(x_i) \bigg\}, $$
$$ \mathcal{B}_{\ell}^{(1)} :=\bigcup_{X_{\ell -1} < x_{i} \leqslant   X_{\ell} } \bigg\{ \big|M_f^{(1)}(x_i)\big|>\sqrt{x_i}R(x_i) \bigg\}  $$
and
$$ \mathcal{B}_{\ell}^{(2)} :=\bigcup_{X_{\ell -1} < x_{i} \leqslant   X_{\ell} } \bigg\{   \big|M_f^{(2)}(x_i)\big|>\sqrt{x_i}R(x_i) \bigg\} . $$
\noindent\textit{Proof of Theorem \ref{theoreme_principal_chapter_1} assuming that $ \sum_{\ell \geqslant 1} \mathbb{P}[\mathcal{B}_{\ell}^{(0)}]$, $\sum_{\ell \geqslant 1} \mathbb{P}[\mathcal{B}_{\ell}^{(1)}]$ and $\sum_{\ell \geqslant 1} \mathbb{P}[\mathcal{B}_{\ell}^{(2)}]$ converge.}\\
    If $ \sum_{\ell \geqslant 1} \mathbb{P}[\mathcal{B}_{\ell}^{(0)}]$, $\sum_{\ell \geqslant 1} \mathbb{P}[\mathcal{B}_{\ell}^{(1)}]$ and $\sum_{\ell \geqslant 1} \mathbb{P}[\mathcal{B}_{\ell}^{(2)}]$ converge then $ \sum_{\ell \geqslant 1} \mathbb{P}[\mathcal{B}_{\ell}]$ converges.
    By Borel--Cantelli's First Lemma~\ref{borelcantelli_chapter_1},  we get Theorem \ref{theoreme_principal_chapter_1}.
\hfill $\square$
\smallskip

\noindent Let's start first by dealing with  $\mathcal{B}_{\ell}^{(0)}$. 

\begin{lemma}\label{lemma_pb1}
The sum $ \sum_{\ell \geqslant 1} \mathbb{P}[\mathcal{B}_{\ell}^{(0)}]$ converges.
\end{lemma}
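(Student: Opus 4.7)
The plan is to control $\Psi_f(x_i,y_0)$ via its second moment and to exploit that $y_0=\exp\{(\log X_\ell)^{1-K/\ell}\}$ is drastically smaller than $X_{\ell-1}$, which makes $y_0$-smooth integers up to $x_i$ extraordinarily sparse.

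The first step is to bound $\mathbb{E}[|\Psi_f(x_i,y_0)|^2]$ by $\Psi(x_i,y_0)$ (the count of $y_0$-smooth integers up to $x_i$) via Lemma~\ref{hypercontractivity} with $m=1$ and coefficients $a_n=\mathbb{1}_{n\leq x_i,\,P(n)\leq y_0}$. Chebyshev's inequality then yields
$$\mathbb{P}\bigl[|\Psi_f(x_i,y_0)|>\sqrt{x_i}\,R(x_i)\bigr]\leq \frac{\Psi(x_i,y_0)}{x_i R(x_i)^2},$$
and a union bound over the $\ll (\log X_\ell)^{1/c_0}$ test points $x_i\in (X_{\ell-1},X_\ell]$ gives
$$\mathbb{P}[\mathcal{B}_\ell^{(0)}]\ll (\log X_\ell)^{1/c_0}\sup_{x_i\in(X_{\ell-1},X_\ell]}\frac{\Psi(x_i,y_0)/x_i}{R(x_i)^2}.$$

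The second step is a smooth-number estimate. Writing $u_i=\log x_i/\log y_0$, the standard bound $\Psi(x_i,y_0)\leq x_i\rho(u_i)$ combined with the Dickman asymptotic $\rho(u)\leq \exp(-\tfrac{1}{2}u\log u)$ (for $u$ large) reduces the problem to verifying that $u_i$ is enormous. Since $x_i>X_{\ell-1}$ one finds
$$u_i\geq \frac{\log X_{\ell-1}}{(\log X_\ell)^{1-K/\ell}}=2^{(\ell-1)^K-\ell^K+K\ell^{K-1}},$$
and the binomial expansion $(\ell-1)^K-\ell^K+K\ell^{K-1}=\binom{K}{2}\ell^{K-2}+O(\ell^{K-3})$ shows that the exponent grows like $\ell^{K-2}$ for $\ell$ large. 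Since $K=25/\varepsilon\geq 25$ (we may assume $\varepsilon\leq 1$ without loss of generality), $u_i$ grows doubly exponentially in $\ell$, so $\rho(u_i)$ decays fast enough to crush the union-bound factor $2^{\ell^K/c_0}$ and the factor $R(x_i)^{-2}$, giving $\sum_\ell \mathbb{P}[\mathcal{B}_\ell^{(0)}]<\infty$.

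The main, essentially notational, obstacle is the bookkeeping in the second step: extracting a clean lower bound on $u_i$ from the binomial expansion, and checking uniformly in $\ell$ that the resulting decay of $\rho(u_i)$ dominates $2^{\ell^K/c_0}$. Conceptually this is the easiest of the three sub-cases, because $y_0$ was calibrated precisely so that $y_0$-smooth integers contribute a negligible deterministic error, making second-moment control sufficient; no martingale machinery or low-moment estimates will be needed here, unlike for $\mathcal{B}_\ell^{(1)}$ and $\mathcal{B}_\ell^{(2)}$.
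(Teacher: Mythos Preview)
Your proposal is correct and follows essentially the same approach as the paper: bound the second moment by the smooth-number count, apply Chebyshev plus a union bound over the test points, and use that $u_i=\log x_i/\log y_0$ is large enough that the decay of $\Psi(x_i,y_0)/x_i$ overwhelms the $\ll 2^{\ell^K/c_0}$ test points. The only difference is that the paper avoids your binomial bookkeeping entirely: it simply observes that $y_0\leqslant x_i^{1/\log_2 x_i}$ for large $\ell$, hence $u_i\geqslant \log_2 x_i$, and invokes the standard estimate $\Psi_1\bigl(x_i,x_i^{1/\log_2 x_i}\bigr)\ll x_i(\log x_i)^{-c\log_3 x_i}$, which already suffices since $\log_2 x_i\asymp \ell^K$ and $\log_3 x_i\asymp K\log\ell$. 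Your sharper lower bound $u_i\geqslant 2^{\binom{K}{2}\ell^{K-2}+O(\ell^{K-3})}$ is correct but unnecessary. One small caveat: the inequality $\Psi(x,y)\leqslant x\rho(u)$ is not literally a theorem; you should instead cite any of the standard upper bounds $\Psi(x,y)\ll x\exp(-c\,u\log u)$ (e.g.\ via Rankin's method), which is all you actually use.
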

\begin{proof}[Proof]
Note that $\Psi_1(x,y)= \# \{ n\leqslant  x: P(n) \leqslant  y \}$. We have by Markov's inequality
$$ \mathbb{P}\big[\mathcal{B}_{\ell}^{(0)}\big] \leqslant   \sum_{X_{\ell-1} < x_i \leqslant   X_{\ell}} \mathbb{P} \bigg[\big|\Psi_f(x_i,y_0)\big|>\sqrt{x_i}R(x_i)\bigg]\leqslant    \sum_{X_{\ell-1} < x_i \leqslant   X_{\ell}} \frac{\mathbb{E}\big[\big|\Psi_f(x_i,y_0)\big|^2\big]}{x_iR(x_i)^2}.$$
However we know that $y_0 \leqslant   x_i^{\frac{1}{\log_2 x_i}}$ for $\ell$ large enough. Using \cite[Theorem 7.6]{MontgVaugh}, we have then
$$\mathbb{E}\big[\Psi_f(x_i,y_0)^2\big] \leqslant \Psi_1\big(x_i,y_0\big) \leqslant  \Psi_1\big(x_i,x_i^{\frac{1}{\log_2 x_i}}\big) \ll  x_i(\log x_i)^{-c\log_3 x_i}$$ where $c$ is an absolute constant. Thus, the sum $$\sum_{\ell \geqslant 1} \mathbb{P}\big[\mathcal{B}_{\ell}^{(0)}\big] \leqslant   \sum_{\ell \geqslant 1} \sum_{X_{\ell-1} < x_i \leqslant   X_{\ell} } (\log x_i)^{-c\log_3  x_i}$$ converges.
\end{proof}

\section{Bounding $\mathbb{P}[\mathcal{B}_{\ell}^{(2)}]$.} 
\noindent The goal of this section is to give an upper bound of $\mathbb{P}[\mathcal{B}_{\ell}^{(2)}]$. 
We set 
$$
N_{ij}(f):= \sum_{\substack{y_{j-1}^2 < d \leqslant  x_i \\ v_{P(d)}(d) \geqslant 2}}^{y_{j-1},y_j} f(d) \sum_{\substack{ n\leqslant  x_i/d\\ P(n) \leqslant y_{j-1}}} f(n)
$$
with
$$
M_{f}^{(2)}(x_i)= \sum_{j=1}^{J}N_{ij}(f).
$$
Following Lau--Tenenbaum-Wu in \cite{Tenenbaum} in section 3, let $m>1$, we have
\begin{equation}\label{equation_01_ten}
\begin{aligned}
    \mathbb{E}(|N_{ij}(f)|^{2m}\, | \, \mathcal{F}_{j-1})&\leqslant \bigg(\sum_{\substack{d\leqslant x_i \\ v_{P(d)}(d) \geqslant 2}}^{y_{j-1},y_j}  \tau_{2m-1}(d) |\Psi_f(x_i/d,y_{j-1})|^2  \bigg)^m.
    \end{aligned}
\end{equation}
We set 
$$
X_{i,j}:= \sum_{\substack{d\leqslant x_i \\ v_{P(d)}(d) \geqslant 2}}^{y_{j-1},y_j}  \tau_{2m-1}(d) |\Psi_f(x_i/d,y_{j-1})|^2.
$$
Note that 
$$
\begin{aligned}
    \mathbb{E}[X_{i,j}] & = \sum_{\substack{d\leqslant x_i \\ v_{P(d)}(d) \geqslant 2}}^{y_{j-1},y_j}  \tau_{2m-1}(d) \mathbb{E}\bigg[|\Psi_f(x_i/d,y_{j-1})|^2\bigg]
    \leqslant x_i \sum_{\substack{d\leqslant x_i \\ v_{P(d)}(d) \geqslant 2}}^{y_{j-1},y_j}  \frac{\tau_{2m-1}(d)}{d}.
\end{aligned}
$$
By writing $d=rp^2$ where $p=P(d)$, we have
\begin{equation}\label{technique1}
    \begin{aligned}
\sum_{\substack{ d\leqslant x_i\\ v_{P(d)}(d) \geqslant 2 }}^{y_{j-1},y_j} \frac{\tau_{2m-1}(d)}{d}
& \leqslant \bigg(\sum_{ r\leqslant \frac{x_i}{y_{j-1}^2} }^{y_{j-1},y_j} \frac{\tau_{2m-1}(r)}{r}+1\bigg) \sum_{y_{j-1} < p\leqslant y_j} \frac{(2m-1)^2}{p^2}
\\ & \ll \frac{m^2}{y_{j-1}} \bigg(\sum_{ r\leqslant \frac{x_i}{y_{j-1}^2} }^{y_{j-1},y_j} \frac{\tau_{2m-1}(r)}{r}+1\bigg) \sum_{y_{j-1} < p\leqslant y_j} \frac{1}{p}.
\end{aligned}
\end{equation}
Note that
$$
\sum_{y_{j-1} < p\leqslant y_j} \frac{1}{p} \ll \frac{1}{\ell} \leqslant 1.
$$
Recall that $ \tau_{2m-1}(r)\leqslant (2m-1)^{\Omega(r)}$. It is clear that
$$
\begin{aligned}
\sum_{\substack{\frac{x_i}{y_{j}^2z(1+\frac{1}{X})} \leqslant r\leqslant \frac{x_i}{y_{j-1}^2z} }}^{y_{j-1},y_j} \frac{\tau_{2m-1}(r)}{r} \leqslant \sum_{r\geqslant 1}^{y_{j-1},y_j} \frac{(2m-1)^{\Omega(r)}}{r} & = \prod_{y_{j-1} < p \leqslant y_j} \bigg(1-\frac{2m-1}{p} \bigg)^{-1}
 \leqslant {\rm e}^{cm/\ell}
\end{aligned}
$$
where $c$ is an absolute constant.
Finally, we get
\begin{equation}\label{Bound_X_ij}
    \mathbb{E}[X_{i,j}] \ll \frac{x_i m^2{\rm e}^{cm/\ell}}{y_{j-1}}. 
\end{equation}
We define the following events
$$
\mathcal{X}_{\ell}=\bigg\{ \sup_{\substack{ X_{\ell-1} < x_i \leqslant X_{\ell} \\1 \leqslant j\leqslant J}} \frac{X_{i,j}}{x_i} \leqslant \frac{1}{\ell^{10K}}  \bigg\}
\text{ and }
\mathcal{X}_{\ell,i,j}=\bigg\{  \frac{X_{i,j}}{x_i} \leqslant \frac{1}{\ell^{10K}}  \bigg\}.
$$
\begin{lemma}\label{X_ijY_ijconverges} For $m \ll \ell^K$, we have
$\sum_{\ell \geqslant 1} \mathbb{P}\big[\,\overline{\mathcal{X}_{\ell}}\,\big]$ converges.
\end{lemma}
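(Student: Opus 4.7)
The plan is to combine a union bound with Markov's inequality applied term by term, using the expectation estimate \eqref{Bound_X_ij} that has already been established. First, I would write
$$
\mathbb{P}\bigl[\,\overline{\mathcal{X}_\ell}\,\bigr] \leqslant \sum_{X_{\ell-1} < x_i \leqslant X_\ell} \sum_{j=1}^J \mathbb{P}\bigl[\,\overline{\mathcal{X}_{\ell,i,j}}\,\bigr]
= \sum_{X_{\ell-1} < x_i \leqslant X_\ell} \sum_{j=1}^J \mathbb{P}\!\left[\frac{X_{i,j}}{x_i} > \frac{1}{\ell^{10K}}\right]
$$
and then, by Markov's inequality together with \eqref{Bound_X_ij},
$$
\mathbb{P}\!\left[\frac{X_{i,j}}{x_i} > \frac{1}{\ell^{10K}}\right] \leqslant \ell^{10K}\,\frac{\mathbb{E}[X_{i,j}]}{x_i} \ll \frac{\ell^{10K}\, m^2\, {\rm e}^{cm/\ell}}{y_{j-1}}.
$$

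Next I would plug in the a priori sizes of the parameters. Since $m \ll \ell^K$, the numerator satisfies $m^2 {\rm e}^{cm/\ell} \ll \ell^{2K} {\rm e}^{c\ell^{K-1}}$, which is dwarfed by an expression of the form ${\rm e}^{C\ell^K}$. The number of test points $x_i \in\, ]X_{\ell-1}, X_\ell]$ is at most $\ll 2^{\ell^K/c_0} = {\rm e}^{(\log 2/c_0)\ell^K}$, and $J \ll \ell^K$. For the denominator, every $y_{j-1}$ appearing in the sum satisfies $y_{j-1} \geqslant y_0 = \exp\bigl(2^{\ell^K(1-K/\ell)}\bigr)$, so that $1/y_{j-1} \leqslant \exp\bigl(-2^{\ell^K(1-K/\ell)}\bigr)$.

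Combining these inputs gives a bound of the shape
$$
\mathbb{P}\bigl[\,\overline{\mathcal{X}_\ell}\,\bigr] \ll \exp\!\Bigl( C'\ell^K - 2^{\ell^K(1-K/\ell)}\Bigr)
$$
for some absolute constant $C'$. For all sufficiently large $\ell$, the doubly exponential negative term $-2^{\ell^K(1-K/\ell)}$ beats any polynomial in $\ell^K$ inside the exponent, so $\mathbb{P}[\,\overline{\mathcal{X}_\ell}\,]$ decays faster than any geometric sequence; summability over $\ell$ is immediate. I do not expect any real obstacle in this lemma: the only delicate step is to make sure that, after the union bound, the enumeration factor $2^{\ell^K/c_0}\ell^K \ell^{10K}$ (coming from counting $x_i$, $j$, and the Markov loss) together with the $m$-dependent factor ${\rm e}^{c\ell^{K-1}}$ remain bounded by ${\rm e}^{O(\ell^K)}$, which is indeed negligible against $1/y_0$.
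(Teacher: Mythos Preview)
Your proposal is correct and follows essentially the same route as the paper: union bound over $x_i$ and $j$, Markov's inequality, then the expectation bound \eqref{Bound_X_ij}, yielding a term of size roughly ${\rm e}^{O(\ell^K)}/y_0$ which is summable. The only cosmetic difference is that the paper records the final estimate as $\ell^{11K}{\rm e}^{c_1\ell^K}/2^{c_2 {\rm e}^{\ell^K}}$ whereas you keep the more precise $\exp\bigl(C'\ell^K - 2^{\ell^K(1-K/\ell)}\bigr)$; both encode the same doubly-exponential decay.
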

\begin{proof}[Proof]
By using the bound \eqref{Bound_X_ij}, we get
$$
\begin{aligned}
\mathbb{P}\big[\,\overline{\mathcal{X}_{\ell}}\,\big] \leqslant \sum_{j=1}^J \sum_{ X_{\ell -1}<x_i \leqslant X_{\ell}} \frac{\ell^{10K}}{x_i} \mathbb{E}\big[  X_{ij} \big] & \ll \sum_{j=1}^J \sum_{ X_{\ell -1}<x_i \leqslant X_{\ell}}  \ell^{10K} \frac{ m^2{\rm e}^{cm/\ell}}{y_{j-1}}
\ll \ell^{11K} \frac{{\rm e}^{c_1\ell^{K}}}{2^{c_2{\rm e}^{\ell^{K}}}}
\end{aligned}
$$
where $c_1,c_2>0$ are absolute constants. Thus, the sum $\sum_{\ell \geqslant 1} \mathbb{P}\big[\,\overline{\mathcal{X}_{\ell}}\,\big]$ converges.
\end{proof}
\begin{prop}\label{B_2_converge}
The sum $ \sum_{\ell \geqslant 1}\mathbb{P}\big[\mathcal{B}_{\ell}^{(2)}\big]$ converges.
\end{prop}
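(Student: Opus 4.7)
The plan is to apply Markov's inequality with the $2m$-th moment of $M_f^{(2)}(x_i)$, using the martingale structure of $(N_{ij})_{1\leqslant j\leqslant J}$ with respect to the filtration $(\mathcal{F}_{y_j})_j$, combined with the already-prepared conditional bound \eqref{equation_01_ten} and the truncation event $\mathcal{X}_\ell$.

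Since $M_f^{(2)} \equiv 0$ in the Rademacher case, I only need to treat the Steinhaus case. I would first record that $(N_{ij})_j$ is a martingale difference sequence: for any $d$ appearing in the $j$-th inner sum, all prime factors of $d$ lie in $(y_{j-1}, y_j]$ and $v_{P(d)}(d) \geqslant 2$, so $f(d)$ is independent of $\mathcal{F}_{y_{j-1}}$ with $\mathbb{E}[f(d)] = 0$; since $\Psi_f(x_i/d, y_{j-1})$ is $\mathcal{F}_{y_{j-1}}$-measurable, it follows that $\mathbb{E}[N_{ij} \mid \mathcal{F}_{y_{j-1}}] = 0$. Next I would split
$$
\mathbb{P}\bigl[\mathcal{B}_\ell^{(2)}\bigr] \leqslant \mathbb{P}\bigl[\,\overline{\mathcal{X}_\ell}\,\bigr] + \sum_{X_{\ell-1} < x_i \leqslant X_\ell} \mathbb{P}\bigl[\{|M_f^{(2)}(x_i)| > \sqrt{x_i}\,R(x_i)\} \cap \mathcal{X}_\ell\bigr],
$$
the first term being summable by Lemma~\ref{X_ijY_ijconverges}. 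For the second, the triangle bound $|M_f^{(2)}(x_i)|^{2m} \leqslant J^{2m-1}\sum_j |N_{ij}|^{2m}$ and the $\mathcal{F}_{y_{j-1}}$-measurability of both $X_{ij}$ and $\mathbb{1}_{\mathcal{X}_{\ell,i,j}}$, together with \eqref{equation_01_ten} and the inclusion $\mathcal{X}_\ell \subseteq \mathcal{X}_{\ell,i,j}$, give
$$
\mathbb{E}\bigl[|N_{ij}|^{2m} \mathbb{1}_{\mathcal{X}_\ell}\bigr] \leqslant \mathbb{E}\bigl[X_{ij}^m \mathbb{1}_{\mathcal{X}_{\ell,i,j}}\bigr] \leqslant \Bigl(\tfrac{x_i}{\ell^{10K}}\Bigr)^{m-1}\mathbb{E}[X_{ij}] \ll \Bigl(\tfrac{x_i}{\ell^{10K}}\Bigr)^{m-1} \frac{x_i m^2 e^{cm/\ell}}{y_{j-1}},
$$
by \eqref{Bound_X_ij}. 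Summing over $j$ (the factors $1/y_{j-1}$ being dominated by $1/y_0$ since the $y_{j-1}$'s grow at least geometrically) and applying Markov's inequality yields, for any fixed $m$ (say $m=2$),
$$
\mathbb{P}\bigl[\{|M_f^{(2)}(x_i)| > \sqrt{x_i}\,R(x_i)\} \cap \mathcal{X}_\ell\bigr] \ll \frac{J^{2m} m^2 \ell^{10K}}{y_0\, R(x_i)^{2m}\,\ell^{10Km}}.
$$

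The number of $x_i \in (X_{\ell-1}, X_\ell]$ is at most $2^{\ell^K/c_0}$, while $\log y_0 = 2^{\ell^K(1-K/\ell)}$ is doubly exponential in $\ell^K$, so the ratio $2^{\ell^K/c_0}/y_0$ decays doubly exponentially in $\ell$ and absorbs every polynomial factor of $\ell$. Adding the summable term $\mathbb{P}[\,\overline{\mathcal{X}_\ell}\,]$ concludes the convergence of $\sum_\ell \mathbb{P}[\mathcal{B}_\ell^{(2)}]$. The only real subtlety is the bookkeeping that turns the conditional moment bound \eqref{equation_01_ten} into the pointwise extraction $X_{ij}^m \mathbb{1}_{\mathcal{X}_{\ell,i,j}} \leqslant (x_i/\ell^{10K})^{m-1} X_{ij}$; once this is in place, everything reduces to inserting the already-established expectation estimate \eqref{Bound_X_ij} and exploiting the enormous size of $y_0$.
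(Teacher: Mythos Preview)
Your proof is correct and follows the same skeleton as the paper's: split off $\overline{\mathcal{X}_\ell}$, apply the H\"older/triangle bound $|M_f^{(2)}|^{2m}\le J^{2m-1}\sum_j|N_{ij}|^{2m}$, and pass through the conditional moment bound \eqref{equation_01_ten} using that $\mathcal{X}_{\ell,i,j}$ is $\mathcal{F}_{y_{j-1}}$-measurable. The one real difference is in the endgame. The paper bounds $X_{ij}^m\le(x_i/\ell^{10K})^m$ on $\mathcal{X}_{\ell,i,j}$ and then takes $m=\ell^K$, so that the factor $(c/\ell^{\,\mathrm{const}})^{\ell^K}$ is what annihilates the $2^{\ell^K/c_0}$ coming from the number of test points $x_i$. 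You instead peel off one power, $X_{ij}^m\le(x_i/\ell^{10K})^{m-1}X_{ij}$, take expectations to recover the extra saving $1/y_{j-1}$ from \eqref{Bound_X_ij}, and keep $m$ fixed (e.g.\ $m=2$); it is then the doubly-exponential smallness of $1/y_0$ that absorbs $2^{\ell^K/c_0}$. Your variant is slightly more economical---no growing moment order and no reliance on the $m$-uniformity of the implied constants---whereas the paper's version shows that the truncation level $\ell^{-10K}$ alone already suffices, without invoking the size of $y_0$ a second time. (Your remark that $(N_{ij})_j$ is a martingale difference sequence is correct but not actually used: the H\"older step does not require it.)
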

\begin{proof}[Proof]
By Cauchy-Schwarz's Inequality, we have
$$\bigg| \sum_{1 \leqslant j \leqslant J} N_{ij}(f)\bigg|^{2m}\leqslant J^{2m-1} \sum_{1 \leqslant j \leqslant J} |N_{ij}(f)|^{2m}.$$
On the other hand, we have
\begin{equation}\label{inequalityX_ijY_ij}
\begin{aligned}
    \mathbb{P}\big[\mathcal{B}_{\ell}^{(2)}\big] &\leqslant \mathbb{P}\big[\mathcal{B}_{\ell}^{(2)}\cap \mathcal{X}_{\ell}\big]+\mathbb{P}\big[\,\overline{\mathcal{X}_{\ell}}\,\big].
\end{aligned}
\end{equation}
and
\begin{equation}
\begin{aligned}
      \mathbb{P}\big[\mathcal{B}_{\ell}^{(2)}\cap \mathcal{X}_{\ell}\big] &\leqslant \sum_{X_{\ell -1}< x_i \leqslant x_{\ell} } \sum_{j=1}^J \frac{\mathbb{E}\big[|N_{ij}(f)|^{2m} \, |\, \mathcal{X}_{\ell,i,j}\big]J^{2m-1}}{(x_i R(x_i)^2)^m}
    \\ & \ll \sum_{X_{\ell -1}< x_i \leqslant X_{\ell} } \sum_{j=1}^J \frac{1}{J} \bigg(\frac{c_5 m J^2 }{R(x_i)^2 \ell^{10K}} \bigg)^m
    \\ & \ll 2^{\ell^K/c_0} \bigg(\frac{c_5 m J^2 }{R(x_i)^2 \ell^{10K}} \bigg)^m.
\end{aligned}
\end{equation}
where $c_5$ is an absolute constant. By taking, $m=\ell^K$ and recall that $J \ll \ell^K$, we get then
$$
\begin{aligned}
      \mathbb{P}\big[\mathcal{B}_{\ell}^{(2)}\cap \mathcal{X}_{\ell}\big] \ll 2^{\ell^K/c_0} \bigg(\frac{c_6 }{ \ell^{15K/2}} \bigg)^{\ell^K}
\end{aligned}
$$
where $c_6$ is an absolute constant. Thus, the sum $\sum_{\ell \geqslant 1} \mathbb{P}\big[\mathcal{B}_{\ell}^{(2)}\cap \mathcal{X}_{\ell}\big]$ converges. By Lemma \ref{X_ijY_ijconverges} and the inequality \eqref{inequalityX_ijY_ij}, the sum $\sum_{\ell \geqslant 1}\mathbb{P}\big[\mathcal{B}_{\ell}^{(2)}\big] $ converges. This ends the proof.
\end{proof}

\section{Bounding \texorpdfstring{$ \mathbb{P}[\mathcal{B}_{\ell}^{(1)}]$}.}

The goal of this section is to give a bound of  $ \mathbb{P}[\mathcal{B}_{\ell}^{(1)}]$. From now on, we indicate by $p,q$ two prime numbers. We consider the filtration $\big\{ \mathcal{F}_p \big\}_{p \in\mathcal{P}}$, where $ \mathcal{F}_p$ denotes the $\sigma$-algebra generated by the random variables $f(q)$ for $q < p$. One can see that the expectation of the random variable $Y_p$ conditioned on $ \mathcal{F}_p$ gives $\mathbb{E}\big[Y_p\,|\,\mathcal{F}_p\big]=0$. Thus the sequence $(Y_p)_{p \in \mathcal{P}}$ is a martingale difference. 

\noindent Set
\begin{equation}
    V_{\ell}(x_i;f):=  \sum_{y_{0} <  p \leqslant   y_J} \big|\Psi_f^{\prime}(x_i/p,p)\big|^2.
\end{equation}

\subsection{Simplifying and smoothing \texorpdfstring{$V_{\ell}(x_i;f)$}.}\label{setion_smouthing}
The goal of this subsection is to simplify $V_{\ell}(x_i;f)$. 
Let $X$ be a large real, such that $\log X \asymp \ell^{K}$. Let $p$ be a prime and $p < t \leqslant  p(1+1/X)$.
Let 
$$ 
\Psi^{\prime}_f(x,z,y):= \sum_{\substack{ z< n \leqslant  x \\ P(n) <y }} f(n).
$$
Using the bound
$$ \big|\Psi_f^{\prime}(x_i/p,p)\big|^2 \leqslant   2 \big|\Psi_f^{\prime}(x_i/t,p)\big|^2 +2 \big|\Psi^{\prime}_f(x_i/p,x_i/t,p)\big|^2,   $$
we have $V_{\ell}(x_i;f) \leqslant   2L_{\ell}(x_i;f)+2W_{\ell}(x_i;f)$ with
\begin{equation}
    L_{\ell}(x_i;f):=\sum_{y_{0} <  p \leqslant   y_J}  \frac{X}{p}\int_{p}^{p(1+1/X)} \big|\Psi_f^{\prime}(x_i/t,p)\big|^2{\rm d}t
\end{equation}
and
\begin{equation}
    W_{\ell}(x_i;f):=\sum_{y_{0} <  p \leqslant   y_J}  \frac{X}{p}\int_{p}^{p(1+1/X)} \big|\Psi^{\prime}_f(x_i/p,x_i/t,p)\big|^2{\rm d}t.
\end{equation}
Let's start by cleaning up $L_{\ell}(x_i;f)$. We have
$$
\begin{aligned}
L_{\ell}(x_i;f)& = \sum_{y_{0} <p \leqslant   y_J}  \frac{X}{p}\int_{p}^{p(1+1/X)} \big|\Psi_f^{\prime}(x_i/t,p)\big|^2{\rm d}t
\\ & = \sum_{\substack{j=1 \\ x_i \geqslant y_{j-1}} }^J\sum_{y_{j-1} <p \leqslant   y_j}  \frac{X}{p}\int_{p}^{p(1+1/X)} \big|\Psi_f^{\prime}(x_i/t,p)\big|^2{\rm d}t.
\end{aligned}
$$
By swapping integral and summation, we have
$$ \begin{aligned}
   L_{\ell}(x_i;f)& = \sum_{\substack{j=1 \\ x_i \geqslant y_{j-1}} }^J \int_{y_{j-1}}^{y_j(1+1/X)} X \sum_{y_{j-1} < p \leqslant   y_j} \frac{1}{p} \mathbb{1}_{p < t < p(1+1/X)}|\Psi_f^{\prime}(x_i/t,p)|^2{\rm d}t
   \\ & \leqslant  \sum_{\substack{j=1 \\ x_i \geqslant y_{j-1}}}^J \int_{y_{j-1}}^{y_j(1+1/X)} X \sum_{\max \{ t/(1+1/X),y_{j-1} \} < p \leqslant  \min\{ t,y_j\}} \frac{1}{p} \big|\Psi_f^{\prime}(x_i/t,p)\big|^2{\rm d}t
   \\ & \leqslant  x_iL^{(1)}_{\ell}(x_i;f)+x_iL^{(2 )}_{\ell}(x_i;f),
\end{aligned}$$
where
\begin{equation}\label{L1}
     L^{(1)}_{\ell}(x_i;f):= \frac{1}{x_i}\sum_{\substack{j=1 \\ x_i \geqslant y_{j-1}}}^J \int_{y_{j-1}}^{y_j} X \sum_{t/(1+1/X) < p \leqslant   t} \frac{1}{p} \big|\Psi_f^{\prime}(x_i/t,p)\big|^2{\rm d}t
\end{equation}
and
\begin{equation}
    L^{(2 )}_{\ell}(x_i;f):= \frac{1}{x_i}\sum_{\substack{j=1 \\ x_i \geqslant y_{j-1}}}^J \int_{y_{j}}^{y_j(1+1/X)} X \sum_{\max \{ t/(1+1/X),y_{j-1} \} < p \leqslant   y_j} \frac{1}{p} \big|\Psi_f^{\prime}(x_i/t,p)\big|^2{\rm d}t.
\end{equation}
By changing the variable  $z:=x_i/t$ inside the integral and by simplifying by $x_i$, we find 
$$ L^{(1)}_{\ell}(x_i;f)= \sum_{\substack{j=1 \\ x_i \geqslant y_{j-1}}}^J \int_{x_i/y_{j}}^{x_i/y_{j-1}} X \sum_{\frac{x_i}{z(1+1/X)} < p \leqslant   \frac{x_i}{z}} \frac{1}{p} |\Psi_f^{\prime}(z,p)|^2\frac{{\rm d}z}{z^2} $$
and
$$
 L^{(2 )}_{\ell}(x_i;f)= \sum_{\substack{j=1 \\ x_i \geqslant y_{j-1}}}^J \int^{\frac{x_i}{y_{j}} }_{\frac{x_i}{y_j(1+1/X)}} X \sum_{ \max\{ \frac{x_i}{z(1+1/X)},y_{j-1}\}< p \leqslant   y_j} \frac{1}{p} |\Psi_f^{\prime}(z,p)|^2\frac{{\rm d}z}{z^2}.
$$
Let's first focus on $j$ such that $1 \leqslant  \frac{\log x_i}{\log y_{j-1}} \leqslant   \ell^{100K}$ in the $L^{(1)}_{\ell}(x_i;f)$'s sum. By the strong form of Mertens’ theorem (with error term given by the Prime Number Theorem) we have
$$
    \sum_{\frac{x_i}{z(1+1/X)} < p \leqslant x_i/z} \frac{1}{p}= \log \bigg( \frac{\log (x_i/z)}{\log \big(\frac{x_i}{z(1+1/X)}\big)}\bigg) +O \bigg({\rm e}^{-C\sqrt{\log \big(\frac{x_i/z}{1+1/X} \big)}}\bigg)
$$
where $C$ is an absolute constant. Since $x_i/y_{j} < z \leqslant  x_i/y_{j-1}$ and $\log y_{j} = {\rm e}^{\frac{1}{\ell}}  \log y_{j-1}$, we get $\log (x_i/z) \asymp \log y_{j-1}$. On the other hand, by assumption, 
$\log X \asymp \log_2 y_{j-1} \asymp \ell^{K}$, we have then
\begin{equation}\label{Mertens}
    \sum_{\frac{x_i}{z(1+1/X)} < p \leqslant   \frac{x_i}{z}} \frac{1}{p} \ll \frac{1}{X\log y_{j-1}}.
\end{equation}
Thus, we have
$$ \begin{aligned}
 &\sum_{\substack{j=1\\ 1 \leqslant  \frac{\log x_i}{\log y_{j-1}} \leqslant   \ell^{100K}}}^J \int_{x_i/y_{j}}^{x_i/y_{j-1}} X \sum_{\frac{x_i}{z(1+1/X)} < p \leqslant   \frac{x_i}{z}} \frac{1}{p} |\Psi_f^{\prime}(z,p)|^2\frac{{\rm d}z}{z^2}
 \\ & \leqslant   \sum_{\substack{j=1\\ 1 \leqslant  \frac{\log x_i}{\log y_{j-1}} \leqslant   \ell^{100K}}}^J  \int_{x_i/y_{j}}^{x_i/y_{j-1}} X \sum_{\frac{x_i}{z(1+1/X)} < p \leqslant   \frac{x_i}{z}} \frac{1}{p} \sup_{\frac{x_i}{z(1+1/X)} < q \leqslant   \frac{x_i}{z}} |\Psi_f^{\prime}(z,q)|^2\frac{{\rm d}z}{z^2}
 \\ & \ll \sum_{\substack{j=1\\ 1 \leqslant  \frac{\log x_i}{\log y_{j-1}} \leqslant   \ell^{100K}}}^J  \frac{1}{\log y_{j}} \int_{x_i/y_{j}}^{x_i/y_{j-1}} \sup_{\frac{x_i}{z(1+1/X)} < q \leqslant   \frac{x_i}{z}} |\Psi_f^{\prime}(z,q)|^2\frac{{\rm d}z}{z^2}.
\end{aligned}
$$
Since 
$$
\begin{aligned}
    |\Psi_f^{\prime}(z,q)|^2 & = \bigg| \Psi_f(z,x_i/z) - \sum_{\substack{n \leqslant z \\ \frac{x_i}{z(1+1/X)} < P(n) \leqslant x_i/z}}  f(n)  + \sum_{\substack{n \leqslant z \\ \frac{x_i}{z(1+1/X)} < P(n) < q}}  f(n)   \bigg|^2
    \\ & \leqslant 4 \big| \Psi_f(z,x_i/z)\big|^2 +4 \bigg|  \sum_{\substack{n \leqslant z \\ \frac{x_i}{z(1+1/X)} < P(n) \leqslant x_i/z}}  f(n)     \bigg|^2 + 4 \bigg|  \sum_{\substack{n \leqslant z \\ \frac{x_i}{z(1+1/X)} < P(n) < q}}  f(n)   \bigg|^2
\end{aligned}
$$
We define,
$$
M_{\ell}(x_i,y_j;f)=M_{\ell}(x_i,y_j;f) := \frac{1}{\log y_{j}} \int_{x_i/y_{j}}^{x_i/y_{j-1}}  |\Psi_f(z,x_i/z)|^2\frac{{\rm d}z}{z^2},
$$
$$
\begin{aligned}
    \lambda^{(2)}_{\ell}(x_i,y_j;f)& :=  \frac{1}{\log y_{j}} \int_{x_i/y_{j}}^{x_i/y_{j-1}} \sup_{\frac{x_i}{z(1+1/X)} \leqslant q \leqslant   \frac{x_i}{z}}  \bigg|  \sum_{\substack{n \leqslant z \\ \frac{x_i}{z(1+1/X)} < P(n) < q}}  f(n)   \bigg|^2\frac{{\rm d}z}{z^2},
    \\ \lambda^{(3)}_{\ell}(x_i,y_j;f) &:= \frac{1}{\log y_{j}} \int_{x_i/y_{j}}^{x_i/y_{j-1}}  \bigg|  \sum_{\substack{n \leqslant z \\ \frac{x_i}{z(1+1/X)} < P(n) \leqslant \frac{x_i}{z}}}  f(n)     \bigg|^2\frac{{\rm d}z}{z^2},
    \\      L^{(12)}_{\ell}(x_i;f)& :=  \sum_{\substack{j=1\\ \frac{\log x_i}{\log y_{j-1}} > \ell^{100K}}}^J \int_{x_i/y_{j}}^{x_i/y_{j-1}} X \sum_{\frac{x_i}{z(1+1/X)} < p \leqslant   \frac{x_i}{z}} \frac{1}{p} |\Psi_f^{\prime}(z,p)|^2\frac{{\rm d}z}{z^2}.
\end{aligned}
$$
Since the number of $y_j$ such that $ 1 \leqslant  \frac{\log x_i}{\log y_{j-1}} \leqslant   \ell^{100K} $ is less than $100K\ell \log \ell$ then
$$
\begin{aligned}
\frac{L_{\ell}(x_i;f)}{x_i} \ll & \ell \log \ell \sup_{\substack{ y_j \\ 1 \leqslant  \frac{\log x_i}{\log y_{j-1}} \leqslant   \ell^{100K}}} M_{\ell}(x_i,y_j;f)+ \ell\log \ell \Bigg( \sum_{k=2}^3\sup_{\substack{ y_j \\ 1 \leqslant  \frac{\log x_i}{\log y_{j-1}} \leqslant   \ell^{100K}}} \lambda^{(k)}_{\ell}(x_i,y_j;f) \Bigg) 
\\ & +  L^{(12)}_{\ell}(x_i;f) + L^{(2)}_{\ell}(x_i;f)
\\ \leqslant & \ell \log \ell \sup_{\substack{ y_j}} M_{\ell}(x_i,y_j;f)+ \ell\log \ell \Bigg( \sum_{k=2}^3\sup_{\substack{ y_j }} \lambda^{(k)}_{\ell}(x_i,y_j;f) \Bigg) 
\\ & +  L^{(12)}_{\ell}(x_i;f) + L^{(2)}_{\ell}(x_i;f).
\end{aligned}
$$ 
Thus
\begin{equation}\label{inequality9901}
    \begin{aligned}
    \frac{V_{\ell}(x_i;f)}{x_i} & \ll\ell \log \ell M_{\ell}(x_i,y_j;f)+  \ell \log \ell \Bigg( \sum_{k=2}^3\sup_{\substack{ 0\leqslant  j\leqslant J }} \lambda^{(k)}_{\ell}(x_i,y_j;f)\Bigg) \\ & \,\,\,\,\,\,\,\, +  L^{(12)}_{\ell}(x_i;f) + L^{(2)}_{\ell}(x_i;f) + \frac{ W_{\ell}(x_i;f)}{x_i}.
\end{aligned}
\end{equation}
It turns out that $ M_{\ell}(x_i,y_j;f)$ makes the most contribution in above right hand sum. The others terms of the sum will be bounded straightforwardly. 
Let $T(\ell)=\ell^{10}$ a positive real parameter depending on $\ell$. We define, for each $k \in \{2,3 \}$ the following probabilities 
\begin{equation}\label{lambda(k)}
    \mathbb{P}_{\ell}^{\lambda,k}:= \mathbb{P}\bigg[  \sup_{X_{\ell -1} < x_i \leqslant   X_{\ell}}  \sup_{\substack{ 0 \leqslant  j\leqslant J }} \lambda_{\ell}^{(k)}(x_i,y_j,f)> \frac{T(\ell)}{ \ell^{K/2} \ell \log \ell} \bigg],
\end{equation}
\begin{equation}\label{lambda(1)}
    \mathbb{P}_{\ell}^{\lambda,1}:= \mathbb{P}\bigg[  \sup_{X_{\ell -1} < x_i \leqslant   X_{\ell}}  \sup_{\substack{ 0 \leqslant  j\leqslant J }} M_{\ell}(x_i,y_j,f)> \frac{T(\ell) \ell^{K/2} }{  \ell \log \ell} \bigg],
\end{equation}
and we define, as well
$$\mathbb{P}_{\ell}^{(12)}:= \mathbb{P}\bigg[ \sup_{X_{\ell -1} < x_i \leqslant   X_{\ell} } L^{(12)}_{\ell}(x_i;f)> \frac{T(\ell)}{ \ell^{K/2}} \bigg] ,$$
$$\mathbb{P}_{\ell}^{(2)}:= \mathbb{P}\bigg[ \sup_{X_{\ell -1} < x_i \leqslant   X_{\ell} } L^{(2)}_{\ell}(x_i;f)> \frac{T(\ell)}{ \ell^{K/2}} \bigg] $$
and
$$
    \mathbb{P}_{\ell}^{W}:=\mathbb{P}\bigg[\sup_{X_{\ell -1} < x_i \leqslant   X_{\ell}} \frac{\ell^{K/2}  W_{\ell}(x_i;f)}{x_i} >1  \bigg].
$$
\subsection{Bounding \texorpdfstring{$\mathbb{P}_{\ell}^{W}$}.}
The goal of this subsection is to prove the convergence of the sum $\sum_{\ell \geqslant 1}\mathbb{P}_{\ell}^{W}$.
\begin{prop}\label{prop2}
We have, $\sum_{\ell \geqslant 1}  \mathbb{P}_{\ell}^{W}$ converges. 
\end{prop}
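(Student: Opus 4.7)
The plan is to bound $\mathbb{P}_\ell^W$ by Markov's inequality applied to $\mathbb{E}[W_\ell(x_i;f)]$, combined with a union bound over the $x_i \in (X_{\ell-1}, X_\ell]$. The crucial input is the sharp estimate $\mathbb{E}[W_\ell(x_i;f)] \ll x_i/X$, which is small enough to defeat the union-bound loss once $X$ is chosen with $\log X$ a sufficiently large multiple of $\ell^K$.

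First I would use orthogonality of the $f(n)$'s to write
\[
\mathbb{E}\big[|\Psi_f^{\prime}(x_i/p, x_i/t, p)|^2\big]
= \#\big\{ n \in (x_i/t, x_i/p] : P(n) < p,\ \text{and } \mu^2(n)=1 \text{ if } f \text{ is Rademacher}\big\}.
\]
Swapping the summation over $n$ with the integration over $t$ and the summation over $p$, and computing the Lebesgue measure $p(1+1/X)-x_i/n$ of the admissible $t$-set for each $n$, I get
\[
\mathbb{E}[W_\ell(x_i;f)] = \sum_{(p,n)\in\mathcal{V}} \Big((X+1) - \frac{X x_i}{p n}\Big),
\]
where $\mathcal{V}$ consists of pairs with $p$ prime in $(y_0, y_J]$, $n$ a positive integer (squarefree if $f$ is Rademacher) with $P(n) < p$ and $x_i X/(X+1) < pn \leq x_i$. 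Setting $m := pn$, the condition $P(n) < p$ together with $p \mid m$ forces $p = P(m)$ uniquely and $v_{P(m)}(m) = 1$. Hence each integer $m$ in the short interval $(x_i X/(X+1), x_i]$ contributes at most one pair in $\mathcal{V}$, every summand lies in $[0,1]$, and
\[
\mathbb{E}[W_\ell(x_i;f)] \leq \#\{ m : x_i X/(X+1) < m \leq x_i \} \leq x_i/X + 1.
\]

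Now Markov's inequality yields $\mathbb{P}[\ell^{K/2} W_\ell(x_i;f)/x_i > T(\ell)] \ll \ell^{K/2}/(X\,T(\ell))$ for each $x_i$, and the test points in $(X_{\ell-1}, X_\ell]$ are at most $N_\ell \leq (\log X_\ell)^{1/c_0} = 2^{\ell^K/c_0}$ in number, so the union bound gives
\[
\mathbb{P}_\ell^W \ll \frac{2^{\ell^K/c_0}\, \ell^{K/2}}{T(\ell)\, X}.
\]
Since the constraint $\log X \asymp \ell^K$ allows me to take $\log X = c\,\ell^K$ for any constant $c > 0$, fixing $c > \log 2 / c_0$ makes $2^{\ell^K/c_0}/X = \exp(-\ell^K(c - \log 2/c_0))$ super-exponentially small in $\ell^K$, and with $T(\ell) \geq \ell^6$ I obtain $\mathbb{P}_\ell^W \ll \ell^{K/2 - 6}\,\exp(-\ell^K(c - \log 2/c_0))$, which is summable in $\ell$.

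The main obstacle to anticipate is the temptation to estimate $\#\{n \in (x_i/t, x_i/p]\}$ by the crude bound ``length\,$+\,1$''; the discrete $+1$ error, summed over primes, contributes a term of size $\pi(x_i) \asymp x_i/\log x_i$, and because $c_0 \leq 10^{-3}$ is so small the resulting quantity $N_\ell/\log X_{\ell-1}$ would grow with $\ell$. The substitution $m = pn$ circumvents this by exploiting that $p$ is uniquely determined by $m$: it removes the $+1$ loss and saves the decisive factor of $\log x_i$ in the estimate of $\mathbb{E}[W_\ell(x_i;f)]$.
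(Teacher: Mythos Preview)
Your argument is correct and takes a genuinely different, more elementary route than the paper. The paper proceeds through Lemma~\ref{lemmachoseX}: it bounds the $r$-th moment $\mathbb{E}[W_\ell(x_i;f)^r]$ for integer $r>1$ via Minkowski's and H\"older's inequalities together with the hypercontractivity Lemmas~\ref{hypercontractivity}--\ref{hypercontractivityresult}, obtaining $\mathbb{E}[W_\ell(x_i;f)^r]\ll_r(x_i/\log x_i)^r$, and then chooses $r>1/c_0$ so that $(\ell^{K/2}/\log x_i)^r$ beats the $\asymp 2^{\ell^K/c_0}$ test points. You bypass all of this machinery: the substitution $m=pn$ forces $p=P(m)$ and $v_{P(m)}(m)=1$, so each $m$ in the short interval $(x_iX/(X+1),x_i]$ contributes at most once with a summand in $[0,1]$, yielding the exact first-moment bound $\mathbb{E}[W_\ell(x_i;f)]\leqslant x_i/X+1$. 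Since $X$ is a large power of $\log x_i$, this is far stronger than $x_i/\log x_i$, and a single application of Markov plus the union bound already suffices.

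One caveat about your choice of $X$: this parameter is not local to the proposition. The same $X$ enters the definition of $L_\ell(x_i;f)$ and is invoked again in Section~\ref{Section_Lambda_2_3} and in the bound for $\mathbb{P}_\ell^{(2)}$, where the paper explicitly recalls the value $X=(\log x_i)^{8r^2-8r+4}$ fixed inside the proof of Lemma~\ref{lemmachoseX}. So you are not free to set $\log X=c\ell^K$ for this section in isolation. That said, the paper's choice (with $r>1/c_0$) gives $\log X\geqslant (8r^2-8r+4)(\ell-1)^K\log 2$, and since $8r^2-8r+4>r>1/c_0$ this already satisfies your constraint $c>\log 2/c_0$ for large $\ell$; conversely any fixed $c>\log 2/c_0$ is easily seen to be large enough for the later sections as well. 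So this is a bookkeeping point rather than a genuine gap.
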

\noindent To prove the above proposition, we need the following Lemma.
\begin{lemma}\label{lemmachoseX}
Let $r >1$ be an integer. We have 
\begin{equation}\label{lemmasmallinterval}
    \mathbb{P}_{\ell}^{W} \ll_r  \sum_{X_{\ell -1} < x_i \leqslant   X_{\ell}}  \bigg(\frac{\ell^{K/2}}{\log x_i} \bigg)^r.
\end{equation}
\end{lemma}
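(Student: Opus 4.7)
The plan is to apply a union bound over the test points combined with Markov's inequality at the $r$-th moment. First I would use subadditivity to write
\[
\mathbb{P}_{\ell}^{W} \leq \sum_{X_{\ell-1} < x_i \leq X_{\ell}} \mathbb{P}\!\left[\frac{\ell^{K/2} W_{\ell}(x_i;f)}{x_i} > T(\ell)\right],
\]
then apply Markov's inequality at moment level $r$ to each summand, obtaining
\[
\mathbb{P}_\ell^W \leq \sum_{X_{\ell-1} < x_i \leq X_\ell} \left(\frac{\ell^{K/2}}{T(\ell)\,x_i}\right)^{\! r} \mathbb{E}\bigl[W_\ell(x_i;f)^r\bigr].
\]
Using $T(\ell) \geq 1$, matching the claimed upper bound reduces to establishing $\mathbb{E}[W_\ell(x_i;f)^r] \ll_r (x_i/\log x_i)^r$ for each test point in the range.

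Second, to bound $\|W_\ell(x_i;f)\|_r$, I would apply Minkowski's inequality both to the sum over primes and to the $t$-integral,
\[
\|W_\ell(x_i;f)\|_r \leq \sum_{y_0 < p \leq y_J} \frac{X}{p} \int_p^{p(1+1/X)} \bigl\|\Psi'_f(x_i/p, x_i/t, p)\bigr\|_{2r}^2 \, \mathrm{d}t,
\]
and then invoke Lemma~\ref{hypercontractivity} with $m = r$ to bound each factor by $\bigl\|\Psi'_f(x_i/p, x_i/t, p)\bigr\|_{2r}^2 \leq \sum_{n\in S_{p,t}} \tau_{2r-1}(n)$, where $S_{p,t}$ denotes the set of integers in the short interval $(x_i/t, x_i/p]$ (of length at most $x_i/(pX)$) whose largest prime factor is less than $p$.

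Third, I would swap the order of summation and integration so that $n$ becomes the outer variable. For each $n$, the Lebesgue measure of the set of $t$ with $n \in (x_i/t, x_i/p]$ is bounded by $p/X$, and the admissible primes $p$ lie in an interval of length $\sim x_i/(nX)$ around $x_i/n$. I would then split by comparing $n$ with $x_i/X$: for $n \geq x_i/X$ the prime interval has length less than $1$, giving at most one prime per $n$ and producing a combinatorial contribution dominated by $\pi(x_i) \sim x_i/\log x_i$, while for $n < x_i/X$ the prime interval is longer, but the divisor sum is controlled by Lemma~\ref{hypercontractivityresult} and the exceedingly small factor $1/X = \exp(-\Theta(\ell^K))$ dominates any polylogarithmic loss in $x_i$ given the growth $\log x_i \leq 2^{\ell^K}$ throughout the range. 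Combining the two regimes yields $\|W_\ell(x_i;f)\|_r \ll_r x_i/\log x_i$ for $\ell$ sufficiently large, with finitely many small $\ell$ absorbed into the implicit constant.

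The main obstacle will be to verify carefully that the polylogarithmic factors $(\log x_i)^{2r-2}$ arising from Lemma~\ref{hypercontractivityresult} (and in general from hypercontractivity at moment $2r$) do not spoil the target bound $x_i/\log x_i$. The crucial ingredients are the super-exponential discrepancy between the sizes of $X = e^{\Theta(\ell^K)}$ and $\log x_i \leq 2^{\ell^K}$, together with the $r$-dependent implicit constants absorbing Shiu-type short-interval divisor estimates, while the freedom $T(\ell) \geq \ell^6$ in the Markov step supplies additional slack where needed.
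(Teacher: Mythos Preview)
Your overall plan (union bound, Markov at level $r$, Minkowski, then a moment estimate for the short-interval sum) matches the paper's, and the reduction to proving $\mathbb{E}[W_\ell(x_i;f)^r]\ll_r (x_i/\log x_i)^r$ is exactly right. The gap is in how you close that moment bound.

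Your claim that ``$1/X=\exp(-\Theta(\ell^K))$ dominates any polylogarithmic loss in $x_i$'' is the problem. There is no super-exponential discrepancy here: since $\log\log x_i\asymp \ell^K$ for $x_i\in(X_{\ell-1},X_\ell]$, every power $(\log x_i)^c$ has logarithm $c\log\log x_i\asymp c\,\ell^K$, which is on the \emph{same} scale as $\log X\asymp\ell^K$. Whether $(\log x_i)^c/X$ is small or large depends entirely on the implicit constants, and in particular on $r$ through the exponent $c$. The smoothing parameter $X$ is not handed to you; it is a free parameter that this lemma is responsible for choosing. The paper fixes $X=(\log x_i)^{8r^2-8r+4}$ precisely so that the polylogarithmic factors from hypercontractivity are absorbed. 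Your argument, as written, never makes such a choice and so does not close. (The extra slack from $T(\ell)\ge\ell^6$ is polynomial in $\ell$ and cannot rescue an error of size $e^{\Theta(\ell^K)}$.) A secondary issue: your split at $n\ge x_i/X$ is vacuous, because all primes in the sum satisfy $p>y_0$ and $\log y_0=2^{\ell^K(1-K/\ell)}\gg \ell^K\asymp\log X$, so any contributing $n$ lies below $x_i/y_0<x_i/X$; the ``$\pi(x_i)$'' term you expect to see has to come from elsewhere.

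The paper also handles the pointwise $2r$-th moment differently from your direct hypercontractivity-plus-swap. Rather than relying on a Shiu-type bound for $\sum_{n\in(x_i/t,x_i/p]}\tau_{2r-1}(n)$ over a very short interval (which is delicate when the length drops below a small power of the endpoint), the paper first isolates the case where the interval contains at most one integer (primes $p>x_i/(1+X)$, contributing $\sum 1\ll x_i/\log x_i$), and for the remaining primes applies Cauchy--Schwarz,
\[
\mathbb{E}\big[|\Psi'_f|^{2r}\big]\le\Big(\mathbb{E}\big[|\Psi'_f|^{2}\big]\,\mathbb{E}\big[|\Psi'_f|^{4r-2}\big]\Big)^{1/2},
\]
so that the short-interval saving $x_i/(pX)$ enters only through the second moment (a cardinality bound), while the high moment is estimated via Lemma~\ref{hypercontractivity} and Lemma~\ref{hypercontractivityresult} over the full range $n\le x_i/p$. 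This is what produces the explicit balance that the choice of $X$ then resolves.
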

\begin{proof}[Proof]

Using Markov's inequality for the power $r >1$, we have 
$$\begin{aligned}
  \mathbb{P}_{\ell}^{W} & \leqslant \frac{1}{T(\ell)^r}  \sum_{X_{\ell -1} < x_i \leqslant   X_{\ell}} \bigg(\frac{\ell^{K/2}}{x_i}\bigg)^r \mathbb{E}\big[W_{\ell}(x_i;f)^r\big]
  \\ & \leqslant \sum_{X_{\ell -1} < x_i \leqslant   X_{\ell}} \bigg(\frac{\ell^{K/2}}{x_i}\bigg)^r \mathbb{E}\big[W_{\ell}(x_i;f)^r\big].
\end{aligned} $$
Following the steps of Harper's work \cite{Harper2} and more recently Mastrostefano  \cite{Mastrostefano} in section 6, we begin first by applying Minkowski's inequality, we then bound the above expectation with
$$\begin{aligned}
  \mathbb{E}\big[W_{\ell}(x_i;f)^r\big]& \leqslant   \bigg(  \sum_{\substack{y_{0}<p \leqslant   y_J \\  p\leqslant   x_i}}  \bigg( \mathbb{E}\bigg[\bigg(\frac{X}{p}\int_{p}^{p(1+1/X)} \big|\Psi^{\prime}_f(x_i/p,x_i/t,p)\big|^2  {\rm d}t \bigg)^{r}\bigg] \bigg)^{\frac{1}{r}}\bigg)^r.
\end{aligned} $$
Then by applying H\"{o}lder's inequality on the normalized integral $ \frac{X}{p} \int^{p(1+1/X)}_p {\rm d}t$ with parameters $1/r$ and $(r-1)/r$ we can bound the above sum with
\begin{equation}
    \leqslant  \bigg(  \sum_{\substack{y_{0}<p \leqslant   y_J \\  p\leqslant   x_i} } \bigg( \frac{X}{p}\int_{p}^{p(1+1/X)} \mathbb{E}\bigg[\big|\Psi^{\prime}_f(x_i/p,x_i/t,p)\big|^{2r}\bigg]  {\rm d}t  \bigg)^{\frac{1}{r}}\bigg)^r.
\end{equation}
Now let's focus on bounding the $2r$-th moment of partial sum of $f$ over short interval. By arguing as Harper \cite{Harper2} in the proof of proposition 2, we observe that when $\frac{x_i}{p}-\frac{x_i}{p(1+1/X)}<1 $, the interval $]x_i/p(1+1/X),x_i/p] $ contains at most one integer. Hence
$$ \frac{X}{p}\int_{p}^{p(1+1/X)} \mathbb{E}\bigg[\big|\Psi^{\prime}_f(x_i/p,x_i/t,p)\big|^{2r} \bigg] {\rm d}t  \leqslant   1.$$
Otherwise we have $p \leqslant   \frac{x_i}{1+X}$, and by applying Cauchy Schwarz's inequality, we get 
$$ 
\begin{aligned}
  \mathbb{E}\bigg[\big|\Psi^{\prime}_f(x_i/p,x_i/t,p) \big|^{2r}\bigg]  &  \leqslant   \sqrt{   \mathbb{E}\bigg[\big|\Psi^{\prime}_f(x_i/p,x_i/t,p)\big|^{2}  \bigg] \mathbb{E}\bigg[\big|\Psi^{\prime}_f(x_i/p,x_i/t,p)\big| ^{2(2r-1)}  \bigg]   }.
\end{aligned}$$
Since $t\leqslant   p(1+1/X)$ then $\frac{x_i}{p}-\frac{x_i}{t}<  \frac{x_i}{p(1+X)}$. We get then
$$\mathbb{E}\bigg[\big|\Psi^{\prime}_f(x_i/p,x_i/t,p)\big|^{2} \bigg] \leqslant   \sum_{\substack{\frac{x_i}{t}<  n \leqslant   \frac{x_i}{p}\\ P(n) <p}} 1 \ll \frac{x_i}{pX}. $$
For the second expectation, we apply Lemma~\ref{hypercontractivity} 
$$ \mathbb{E}\bigg[\big|\Psi^{\prime}_f(x_i/p,x_i/t,p)\big|^{2(2r-1)} \bigg]\ll \bigg(\sum_{\substack{\frac{x_i}{t}<  n \leqslant   \frac{x_i}{p}\\ P(n) <p}}\tau_{4r-3}(n) \bigg)^{2r-1}.  $$ 
By applying Lemma~\ref{hypercontractivityresult}, we deduce
$$ \ll \bigg( \frac{x_i}{p}(\log x_i)^{4r-4} \bigg)^{2r-1}.$$
We get at the end in the case where $p \leqslant   \frac{x_i}{1+X}$
\begin{equation}
    \mathbb{E}\bigg[\big|\Psi^{\prime}_f(x_i/p,x_i/t,p)\big|^{2r}\bigg] \ll_q \bigg(\frac{x_i}{p}\bigg)^r \frac{(\log x_i)^{(2r-2)(2r-1)}}{\sqrt{X}}. 
\end{equation}
By choosing $X=(\log x_i)^{8r^2-8r+4}$, we conclude that 
\begin{equation}\label{equation2}
\begin{aligned}
  \mathbb{E}\big[W_{\ell}(x_i;f)^{r}\big] & \ll_r   \bigg( \sum_{\frac{x_i}{1+X} < p \leqslant   x_i }  1 + x_i \frac{(\log x_i)^{\frac{(2r-2)(2r-1)}{r}}}{X^{\frac{1}{2r}}}\sum_{  p\leqslant   \frac{x_i}{1+X} } \frac{1}{p} \bigg)^r
  \\ & \ll_r \bigg(\frac{x_i}{\log x_i} \bigg)^r.
   \end{aligned}
\end{equation}
Then
$$\mathbb{P}_{\ell}^{W} \ll_r  \sum_{X_{\ell -1} < x_i \leqslant   X_{\ell}}  \bigg(\frac{\ell^{K/2}}{\log x_i} \bigg)^r,$$
this ends the proof.
\end{proof}
\noindent \textit{ Proof of Proposition \ref{prop2}.}

By choosing $r > 1/c_0$ where $c_0$ is the constant chosen in Lemma~\ref{Tenenbaum_lemma2}, we get the convergence of $\sum_{\ell \geqslant 1} \mathbb{P}_{\ell}^{W}.$

\hfill $\square$

\subsection{Bounding \texorpdfstring{$\mathbb{P}_{\ell}^{\lambda,1}$}. }\label{low_moment_estimates}
The goal of this subsection is to prove the convergence of the sum of  $\sum_{\ell \geqslant 1}\mathbb{P}_{\ell}^{\lambda,1}$. 
We have for all $1\leqslant j\leqslant J$
$$
M_{\ell}(x_i,y_j;f)\leqslant U_j:= \frac{1}{\log y_j} \int_{0}^{+\infty} \max_{y_{j-1}<p\leqslant y_j} |\Psi_f(z,p)|^2\frac{{\rm d}z}{z^2}
$$
and
$$
M_{\ell}(x_i,y_0;f) \leqslant U_0:= \frac{1}{\log y_0} \int_{0}^{+\infty}  |\Psi_f(z,y_0)|^2\frac{{\rm d}z}{z^2}.
$$
We set
\begin{equation}\label{I_j}
    I_{j}:= \bigg(\frac{\log y_{j}}{\log y_0}\bigg)^{-1/\ell^K} \frac{1}{\log y_{j}}\int_{- \infty}^{+\infty}  \bigg|\frac{F_{j}(1/2+it)}{1/2+it}\bigg|^2 {\rm d}t
\end{equation}
where $F_{j}(s)=\prod_{p \leqslant y_j}\big(1+ a_f\frac{f(p)}{p^s} \big)^{a_f}$.
The reason why we added the factor $\big(\frac{\log y_{j}}{\log y_0}\big)^{-1/\ell^K}$ is to make sure that $(I_{j})_{0 \leqslant  j \leqslant  J}$ is supermartingale sequence with respect to the filtration $(\mathcal{F}_{y_j})_{0 \leqslant  j \leqslant  J}$.\\
Recall $T(\ell)=\ell^{10}$, we denote 
\begin{equation}\label{definition_T_1}
    T_1(\ell)=\frac{T(\ell)}{\ell \log \ell}.
\end{equation}
Define $\mathcal{S}$ to be the event $\big\{ I_{j} \leqslant   \frac{T_1(\ell)^{1/2}}{\ell ^{K/2}} \text{ for all } 0 \leqslant j\leqslant J \big\}$ and $\mathcal{S}_j:=\big\{ I_{j} \leqslant   \frac{T_1(\ell)^{1/2}}{\ell ^{K/2}}  \big\} $.
Now we have
$$
\begin{aligned}
    \mathbb{P}_{\ell}^{\lambda,1} & \leqslant  \mathbb{P}\bigg[ \bigcup_{0 \leqslant j\leqslant J}\bigg\{ 
    U_j \geqslant  \frac{T(\ell) \ell^{K/2} }{  \ell \log \ell}   \bigg\} \bigcap \big\{ \mathcal{S}\big\}\bigg]+ \mathbb{P}\big[\,\overline{\mathcal{S}}\,\big]
    \\ & \leqslant \sum_{j=0}^{J} \mathbb{P}\bigg[ \bigg\{ 
    U_j \geqslant  \frac{T(\ell) \ell^{K/2} }{  \ell \log \ell}   \bigg\} \bigcap \big\{ \mathcal{S}_{j-1}\big\}\bigg] + \mathbb{P}\big[\,\overline{\mathcal{S}}\,\big].
\end{aligned}
$$
\noindent Let start by treating
$$
\widetilde{\mathbb{P}}_j:=\mathbb{P}\bigg[ \bigg\{ 
U_j \geqslant  \frac{T(\ell) \ell^{K/2} }{  \ell \log \ell}   \bigg\} \bigcap \big\{ \mathcal{S}_{j-1}\big\}\bigg].
$$
By Markov's inequality, we have
$$
\begin{aligned}
\widetilde{\mathbb{P}}_j & \leqslant \mathbb{P}\bigg[ \bigg\{ 
U_j \geqslant  \frac{T(\ell) \ell^{K/2} }{  \ell \log \ell}   \bigg\}\,  \bigg| \, \big\{ \mathcal{S}_{j-1}\big\}\bigg] 
\\ & \leqslant \frac{\ell \log \ell}{T(\ell) \ell^{K/2}} \mathbb{E}\big[ U_j \, \big|\,\mathcal{S}_{j-1}\big].
\end{aligned}
$$
Before going further, we need the following lemmas
\begin{lemma}\label{submartingale_q_0}
    Let $z\geqslant 1$, we consider
$$
X_q(z,q_0):=   \sum_{\substack{n \leqslant z \\ q_0 < P(n) \leqslant q}}  f(n) .
$$
We have $(|X_q(z,q_0)|)_{q \in \mathcal{P}}$ is a submartingale under the filtration $(\mathcal{F}_q)$.
\end{lemma}
\begin{proof}[Proof]
Indeed, let $q<p$ be two consecutive prime numbers. For Rademacher case, we have 
$$
\begin{aligned}
\mathbb{E}\big[ \big|X_p(z,q_0) \big| \,\big|\, \mathcal{F}_p\big] & =\mathbb{E}\bigg[ \big|X_q(z,q_0)+f(p)X_q(z/p,q_0) \big| \,\bigg|\, \mathcal{F}_p\bigg]
\\ & = \frac{1}{2}\big|X_q(z,q_0)+X_q(z/p,q_0) \big|+\frac{1}{2}\big|X_q(z,q_0)-X_q(z/p,q_0) \big|
\\ & \geqslant \big|X_q(z,q_0) \big|.
\end{aligned}
$$ 
For Steinhaus case, let $n$ to be the smallest integer such that $\frac{z}{p^n} <1$. By applying Lemma \ref{technical_lemma_inequality}, we have
$$
\begin{aligned}
\mathbb{E}\big[ \big|X_p(z,q_0) \big| \,\big|\, \mathcal{F}_p\big] & =\mathbb{E}\bigg[ \big|X_q(z,q_0)+\sum_{k=1}^n f(p)^kX_q(z/p^k,q_0) \big| \,\bigg|\, \mathcal{F}_p\bigg]
\\& =\int_{0}^1 \big|X_q(z,q_0)+\sum_{k=1}^n {\rm e}^{i2 \pi k \vartheta}X_q(z/p^k,q_0) \big| {\rm d}\vartheta
\\ & \geqslant \big|X_q(z,q_0)\big|
\end{aligned}
$$
Then in both cases Rademacher and Steinhaus, we have $$\mathbb{E}\big[ \big|X_p(z,q_0) \big| \,\big|\, \mathcal{F}_p\big] \geqslant \big|X_q(z,q_0) \big|.$$
\end{proof}
\begin{lemma}\label{supermartinagale_lemma_chapter_1}
    For $\ell$ large enough, the sequence $(I_j)_{j\geqslant 0}$ is supermartingale with respect to the filtration $(\mathcal{F}_{y_j})_{j\geqslant 0}$.
\end{lemma}
\begin{proof}[Proof]
We have
    $$
    \begin{aligned}
        \mathbb{E}\big[I_{j} \, \big| \mathcal{F}_{y_{j-1}}\,\big] & = \frac{1}{\log y_j}\bigg(\frac{\log y_{j}}{\log y_0}\bigg)^{-1/\ell^K}\int_{-\infty}^{+\infty} \mathbb{E}\bigg[ \bigg|  \frac{F_j(1/2+it)}{1/2+it} \bigg|^2 \, \big| \, \mathcal{F}_{y_{j-1}} \,\, \bigg] {\rm d}t 
        \\ & = \frac{1}{\log y_j}\bigg(\frac{\log y_{j}}{\log y_0}\bigg)^{-1/\ell^K}\int_{-\infty}^{+\infty} \mathbb{E}\bigg[\prod_{y_{j-1}<p\leqslant y_j} \bigg|1+a_f\frac{f(p)}{p^{1/2+it}} \bigg|^{2a_f} \bigg] \bigg|  \frac{F_{j-1}(1/2+it)}{1/2+it} \bigg|^2   {\rm d}t.
    \end{aligned}
    $$
    We have, using Lemma \ref{lemmarachid2}
    $$
    \mathbb{E}\bigg[\prod_{y_{j-1}<p\leqslant y_j} \bigg|1+a_f\frac{f(p)}{p^{1/2+it}} \bigg|^{2a_f} \bigg]:= \prod_{y_{j-1}<p\leqslant y_j} \bigg(1+\frac{a_f}{p}\bigg)^{a_f}.
    $$
   In the sake of readability, we set
   $$
   b(a_f,j):= \prod_{y_{j-1}<p\leqslant y_j} \bigg(1+\frac{a_f}{p}\bigg)^{a_f}{\rm e}^{-1/\ell} \bigg(\frac{\log y_{j}}{\log y_{j-1}}\bigg)^{-1/\ell^K}.
   $$
    By collecting the previous computations together, we find
    $$
    \begin{aligned}
        \mathbb{E}\big[I_{j} \, \big| \mathcal{F}_{y_{j-1}}\,\big] \leqslant b(a_f,j)I_{j-1}.
    \end{aligned}
    $$
    To end the proof it suffices to prove that $b(a_f,j)\leqslant 1$. Recall that $\frac{\log y_j}{\log y_{j-1}}={\rm e}^{1/\ell}$.
    For $\ell$ large enough, we have
$$
\begin{aligned}
b(a_f,j) & = {\rm e}^{-1/\ell} \big({\rm e}^{\frac{1}{\ell}}\big)^{-1/\ell^K}\prod_{y_{j-1} <   p \leqslant y_{j}} \bigg(1 +a_f\frac{1}{p} \bigg)^{a_f} 
\\ & = \exp \bigg( -\frac{1}{\ell} + \frac{-1}{\ell^{K+1}}+a_f\sum_{y_{j-1} <   p \leqslant y_{j}} \log  \bigg(1 +a_f\frac{1}{p} \bigg)  \bigg) 
 \\ & = \exp \bigg(\frac{-1}{\ell^{K+1}} + O\bigg(\sum_{y_{j-1} \leqslant   p < y_{j}} \frac{1}{p^2}\bigg) +O\bigg( {\rm e}^{-C\sqrt{\log y_{j-1}}}\bigg) \bigg)
\end{aligned}
$$
for some constant $C$. We have $\sum_{y_{j-1} \leqslant  p < y_{j}} \frac{1}{p^2}\ll \frac{1}{y_{j-1}}\ll \frac{1}{y_0}$.
We get
$$
\begin{aligned}
 b(a_f,j) & = \exp \bigg(\frac{-1}{\ell^{K+1}} +O\bigg( {\rm e}^{-C\sqrt{\log y_{0}}}\bigg) \bigg)
 \\ & = \exp \bigg(\frac{-1}{\ell^{K+1}} +O\bigg( {\rm e}^{-2^{\frac{\ell^{K}}{4}}}\bigg) \bigg).
\end{aligned}
$$
Thus, for large $\ell$, we have $b(a_f,j)\leqslant 1$.
It follows then that $\mathbb{E}\big[I_{j} \, \big| \,\mathcal{F}_{y_{j-1}}\big] \leqslant ~ I_{j-1}.$
\end{proof}
\noindent Using Lemma \ref{submartingale_q_0} for $q_0=1$, followed by Lemma \ref{doob2} for $r=2$, we get \\$$\begin{aligned}
    \mathbb{E}\big[ \max_{y_{j-1}<p\leqslant y_j} |\Psi_{f}(z,p)|^2  \, \big|\,\mathcal{S}_{j-1}\big] & \leqslant 4 \max_{y_{j-1}<p\leqslant y_j} \mathbb{E}\big[  |\Psi_{f}(z,p)|^2  \, \big|\,\mathcal{S}_{j-1}\big] \\ & = 4 \mathbb{E}\big[  |\Psi_{f}(z,y_j)|^2  \, \big|\,\mathcal{S}_{j-1}\big]
\end{aligned}$$
Recall that $\bigg(\frac{\log y_{j}}{\log y_0}\bigg)^{-1/\ell^K} \asymp 1$.  We have then
$$
\begin{aligned}
     \mathbb{E}\big[ U_j \, \big|\,\mathcal{S}_{j-1}\big] & =  \frac{1}{\log y_j}\int_{0}^{+\infty}\mathbb{E}\big[ \max_{y_{j-1}<p\leqslant y_j} |\Psi_{f}(z,p)|^2  \, \big|\,\mathcal{S}_{j-1}\big] \frac{{\rm d}z}{z^2}
     \\& \leqslant  \frac{4}{\log y_j}\int_{0}^{+\infty}\mathbb{E}\big[  |\Psi_{f}(z,y_j)|^2  \, \big|\,\mathcal{S}_{j-1}\big] \frac{{\rm d}z}{z^2}
     \\& \ll \mathbb{E}\bigg[ \bigg(\frac{\log y_{j}}{\log y_0}\bigg)^{-1/\ell^K} \frac{1}{\log y_j}\int_{0}^{+\infty}  |\Psi_{f}(z,y_j)|^2  \,  \frac{{\rm d}z}{z^2}\bigg|\,\mathcal{S}_{j-1}\bigg]
     \\& =\mathbb{E}\big[I_j\,\big|\, \mathcal{S}_{j-1}\big].
\end{aligned}
$$
Now by using Lemma \ref{supermartinagale_lemma_chapter_1}, we have $\mathbb{E}\big[I_j\,\big|\, \mathcal{F}_{j-1}\big]\leqslant I_{j-1}.$
Thus we have 
$$
\mathbb{E}\big[ U_j \, \big|\,\mathcal{S}_{j-1}\big] \ll \mathbb{E}\big[I_j\,\big|\, \mathcal{S}_{j-1}\big] = \mathbb{E}\bigg[\mathbb{E}\big[I_j\,\big|\, \mathcal{F}_{j-1}\big]\,\big|\, \mathcal{S}_{j-1}\bigg] \leqslant \mathbb{E}\big[I_{j-1}\,\big|\, \mathcal{S}_{j-1}\big]\leqslant \frac{T_1(\ell)^{1/2}}{\ell^{K/2}}.
$$
Thus, we get at the end
\begin{equation}\label{sum_P_j_tilde}
    \sum_{j=1}^{J} \Tilde{\mathbb{P}}_j\ll \sum_{j=1}^{J} \frac{T_1(\ell)^{1/2}}{\ell^{K/2}} \frac{\ell \log \ell}{T(\ell) \ell^{K/2}} \ll \frac{T_1(\ell)^{1/2}\ell \log \ell}{T(\ell)}=\frac{\sqrt{\ell \log \ell}}{T(\ell)^{1/2}}.
\end{equation}
Since $T(\ell) = \ell^{10}$, we have $\sum_{\ell \geqslant 1} \frac{\sqrt{\ell \log \ell}}{T(\ell)^{1/2}}$ converges.\\
Let's now consider $ \mathbb{P}\big[ \mathcal{S}\big]$.\\
We define $\mathcal{A}:= \big\{ I_0\leqslant \frac{T_1^{1/4}(\ell)}{\ell^{K/2}} \big\}$.
\begin{lemma}\label{P(S)}
    For $\ell$ large enough, we have $\mathbb{P}\big[\,\overline{\mathcal{S}}\,\big]\ll\frac{1}{(T_1(\ell))^{1/6}}$.
\end{lemma}
\begin{proof}[Proof]
    Indeed, we have $$\mathbb{P}\big[\,\overline{\mathcal{S}}\,\big] \leqslant \mathbb{P}\bigg[\max_{0 \leqslant j\leqslant J} I_j > \frac{(T_1(\ell))^{1/2}}{\ell^{K/2}}\, \bigg|\, \mathcal{A} \bigg] +  \mathbb{P}\big[\,\overline{\mathcal{A}}\,\big].$$
Recall, by Lemma $\ref{supermartinagale_lemma_chapter_1}$, the sequence $(I_j)$ is supermartingale. Thus by lemma $\ref{doob}$, we get
$$
\mathbb{P}\bigg[\max_{0 \leqslant j\leqslant J} I_j > \frac{(T_1(\ell))^{1/2}}{\ell^{K/2}}\, \bigg|\, \mathcal{A} \bigg] \leqslant \frac{\ell^{K/2}}{(T_{1}(\ell))^{1/2}} \mathbb{E}\big[I_0\, \big|\, \mathcal{A}\big] \leqslant \frac{1}{(T_{1}(\ell))^{1/4}}.
$$
On the other hand we have, From Key Proposition 1 and Key Proposition 2 in \cite{Harper}, as it is done in the paragraph entitled: “Proof of the upper bound in Theorem 1, assuming Key Propositions 1 and 2”, for Steinhaus case, we have by taking $y_0=x^{1/{\rm e}}$, $q=2/3$ and $k=0$
\begin{equation}\label{Harperineq}
\mathbb{E}[I_0^{2/3}]\ll  \mathbb{E}\bigg[\bigg(\frac{1}{\log y_0}\int_{- \infty}^{+\infty}  \bigg|\frac{F_{0}(1/2+it)}{1/2+it}   \bigg|^2 {\rm d}t\bigg)^{2/3} \bigg] \ll \frac{ 1}{\ell^{K/3}} .
\end{equation}
In the case of Rademacher, the inequality \eqref{Harperineq} follows directly from Key Propositions 3 and 4 in [12], using the same proof as in the Steinhaus case and by taking the same values $y_0=x^{1/{\rm e}}$, $q=2/3$ and $k=0$. The second part of the lemma follows easily from Markov's inequality
$$\mathbb{P}\bigg[I_{0} > \frac{T_1(\ell)^{1/4}}{\ell ^{K/2}} \bigg]\leqslant   \frac{\ell ^{\frac{K}{3}}\mathbb{E}[I_{0}^{\frac{2}{3}}]}{T_1(\ell)^{\frac{1}{6}}} \ll \frac{1}{T_1(\ell)^{1/6}}. $$
\end{proof}
\begin{prop}\label{converge_P_lambda_1}
$ \sum_{\ell \geqslant 1 }\mathbb{P}^{\lambda,1}_{\ell} $ converges.
\end{prop}
\begin{proof}[Proof]
By gathering Lemma \ref{P(S)} and inequality \eqref{sum_P_j_tilde}, we get 
$$
\mathbb{P}^{\lambda,1}_{\ell} \ll \frac{1}{(T_1(\ell))^{1/6}}.
$$
Since $T_1(\ell) =\frac{T(\ell)}{\ell \log \ell} \gg \ell^8$.
Thus $ \sum_{\ell \geqslant 1 }\mathbb{P}^{\lambda,1}_{\ell} $ converges.
\end{proof}
\subsection{Bounding $\mathbb{P}_{\ell}^{\lambda,2}$ and $\mathbb{P}_{\ell}^{\lambda,3}$.}\label{Section_Lambda_2_3}
\begin{lemma}\label{Lemma_convergence_lambda_2_3}
    For $k=2,3$, the sum  $\sum_{\ell \geqslant 1} \mathbb{P}_{\ell}^{\lambda,k}$ converges.
\end{lemma}
\begin{proof}[Proof]
    We have
$$
\begin{aligned}
    \mathbb{P}_{\ell}^{\lambda,k} \leqslant  \frac{\ell^K}{T_1(\ell)^2}\sum_{X_{\ell -1} < x_i \leqslant   X_{\ell}}  \sum_{\substack{ 0 \leqslant  j\leqslant J }} \mathbb{E}\bigg[ \bigg(\lambda_{\ell}^{(k)}(x_i,y_j;f) \bigg)^2\bigg].
\end{aligned}
$$
For fixed $z$, we consider
$$
X_q(z):=   \sum_{\substack{n \leqslant z \\ \frac{x_i}{z(1+1/X)} < P(n) \leqslant q}}  f(n)   .
$$
By Lemma \ref{submartingale_q_0}, for $q_0=\frac{x_i}{z(1+1/X)}$, we have $(|X_q(z)|)_{q \in \mathcal{P}}$ is a submartingale under the filtration $(\mathcal{F}_p)$. Thus,
by Cauchy-Schwarz's inequality followed by Doob's $L^4$-inequality (Lemma \ref{doob2}), we get 
$$
\begin{aligned}
     \mathbb{E}\bigg[ \bigg(\lambda_{\ell}^{(2)}(x_i,y_j;f) \bigg)^2\bigg] & \leqslant  \frac{1}{(\log y_j)^2}  \bigg(\int_{x_i/y_{j}}^{x_i/y_{j-1}}  \frac{{\rm d}z}{z} \bigg) 
     \\ & \,\,\,\,\,\,\times \int_{x_i/y_{j}}^{x_i/y_{j-1}} \mathbb{E}\Bigg[ \Bigg(\sup_{\frac{x_i}{z(1+1/X)} \leqslant q \leqslant   \frac{x_i}{z}}  \bigg|  \sum_{\substack{n \leqslant z \\ \frac{x_i}{z(1+1/X)} < P(n) < q}}  f(n)   \bigg|^4\Bigg)\Bigg] \frac{{\rm d}z}{z^3}
    \\ & \ll \frac{1}{\log y_j} \int_{x_i/y_{j}}^{x_i/y_{j-1}} \mathbb{E}\Bigg[   \bigg|  \sum_{\substack{n \leqslant z \\ \frac{x_i}{z(1+1/X)} < P(n) \leqslant \frac{x_i}{z}}}  f(n)   \bigg|^4\Bigg] \frac{{\rm d}z}{z^3}.
\end{aligned}
$$
By Cauchy-Schwarz's inequality, we get in the case of $\lambda_{\ell}^{(3)}(x_i,y_j;f)$
$$
\begin{aligned}
    \mathbb{E}\bigg[ \bigg(\lambda_{\ell}^{(3)}(x_i,y_j;f) \bigg)^2\bigg] & \ll \frac{1}{\log y_j} \int_{x_i/y_{j}}^{x_i/y_{j-1}} \mathbb{E}\Bigg[   \bigg|  \sum_{\substack{n \leqslant z \\ \frac{x_i}{z(1+1/X)} < P(n) \leqslant \frac{x_i}{z}}}  f(n)   \bigg|^4\Bigg] \frac{{\rm d}z}{z^3}.
\end{aligned}
$$
To give an upper bound of the fourth moment, we use Lemma \ref{hypercontractivity} and Lemma \ref{hypercontractivityresult}. This gives
$$
\begin{aligned}
    \mathbb{E}\Bigg[   \bigg|  \sum_{\substack{n \leqslant z \\ \frac{x_i}{z(1+1/X)} < P(n) \leqslant \frac{x_i}{z}}}  f(n)   \bigg|^4\Bigg] & \leqslant \bigg(  \sum_{\substack{n \leqslant z \\ \frac{x_i}{z(1+1/X)} < P(n) \leqslant \frac{x_i}{z}}}  \tau_3(n)   \bigg)^2
    \\ & \leqslant \bigg( 3 \sum_{\frac{x_i}{z(1+1/X)} <p\leqslant \frac{x_i}{z}} \,\,\,\,\sum_{\substack{k \leqslant \frac{z}{p} }}  \tau_3(k)   \bigg)^2
    \\ & \ll z^2 (\log z)^4 \bigg(  \sum_{\frac{x_i}{z(1+1/X)} <p\leqslant \frac{x_i}{z}} \frac{1}{p}   \bigg)^2.
\end{aligned}
$$
Since $ x_i/y_{j} <z \leqslant x_i/y_{j-1}$, from \eqref{Mertens}, we have 
$$
\sum_{\frac{x_i}{z(1+1/X)} <p\leqslant \frac{x_i}{z}} \frac{1}{p} \ll \frac{1}{X \log y_j}.
$$
We get then, in both cases ($k=2 \text{ or } 3$)
$$
\begin{aligned}
    \mathbb{E}\bigg[ \bigg(\lambda_{\ell}^{(k)}(x_i,y_j;f) \bigg)^2\bigg] & \ll \frac{(\log x_i)^4}{X^2 (\log y_j)^3} \int_{x_i/y_{j}}^{x_i/y_{j-1}} \frac{{\rm d}z}{z}
     \ll \frac{(\log x_i)^4}{X^2 (\log y_0)^2}.
\end{aligned}
$$
Thus at the end, we get
$$
\begin{aligned}
    \mathbb{P}_{\ell}^{\lambda,k} & \ll \frac{\ell^{2K}}{T_1(\ell)^2 (\log y_0)^2} \sum_{X_{\ell -1} < x_i \leqslant   X_{\ell}} \frac{(\log x_i)^4}{X^2}.
\end{aligned}
$$
Now recall from the proof of Lemma \ref{lemmachoseX} in Section \ref{setion_smouthing} that $X = (\log x_i)^{8r^2-8r+4}$ where $r > 1/c_0>1$ ($c_0$ is defined in Lemma \ref{Tenenbaum_lemma2}). Since $2\times(8r^2-8r+4)-4> r $ for $r> 1$ , we have then $c_0(2\times(8r^2-8r+4)-4)>1$.
Thus 
$$
\begin{aligned}
    \mathbb{P}_{\ell}^{\lambda,k} & \ll \frac{\ell^{2K}}{T_1(\ell)^2 (\log y_0)^2} \sum_{ i \leqslant   2^{\ell^K/c_0}} \frac{1}{i^{c_0(2\times(8r^2-8r+4)-4)}}
    \\ & \ll \frac{\ell^{2K}}{T_1(\ell)^2 (\log y_0)^2}.
\end{aligned}
$$
Since $\log y_0 = \frac{2^{\ell^k}}{2^{K\ell^{K-1}}}$ and $T_1(\ell)\geqslant \ell^4$, then $\sum_{\ell \geqslant1}\mathbb{P}_{\ell}^{\lambda,k}$ converges which ends the proof.
\end{proof}
\subsection{Bounding \texorpdfstring{$\mathbb{P}^{(12)}_{\ell}$}.}\label{Section_L3}
The goal of this subsection is to give an upper bound bound of $\mathbb{P}^{(12)}_{\ell}$.

\begin{lemma}\label{lemmatop99}
For $\ell$ large enough and $x_i \in ]X_{\ell-1},X_{\ell}]$, we have
\begin{equation}
    \mathbb{E}\big[ L^{(12)}_{\ell}(x_i;f) \big] \ll \ell^K {\rm e}^{\ell^{-99K}/2}.
\end{equation}

\end{lemma}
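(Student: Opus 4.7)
The strategy is to take the expectation termwise and exploit orthogonality of $f$ together with the restriction $\log x_i/\log y_{j-1} > \ell^{100K}$. Exchanging expectation with the $j$-sum and the $z$-integral (by Fubini, everything being nonnegative), we obtain
\begin{equation*}
\mathbb{E}\big[L^{(12)}_{\ell}(x_i; f)\big] = \!\!\!\sum_{\substack{j=1\\ \log x_i/\log y_{j-1} > \ell^{100K}}}^J \int_{x_i/y_j}^{x_i/y_{j-1}} X \!\!\!\sum_{\frac{x_i}{z(1+1/X)} < p \leqslant x_i/z}\!\!\!\!\! \frac{\mathbb{E}\big[|\Psi^{\prime}_f(z,p)|^2\big]}{p}\frac{{\rm d}z}{z^2}.
\end{equation*}
By orthogonality (in both models) we have $\mathbb{E}\big[|\Psi^{\prime}_f(z,p)|^2\big] \leqslant \Psi_1(z, p) := \#\{n\leqslant z : P(n) < p\} \leqslant z$.

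Next, we invoke the Mertens-type bound \eqref{Mertens} to control the inner prime sum: $X \sum_{\frac{x_i}{z(1+1/X)} < p \leqslant x_i/z} 1/p \ll 1/\log y_{j-1}$ uniformly for $z \in (x_i/y_j, x_i/y_{j-1}]$. Its derivation (PNT with error $X{\rm e}^{-C\sqrt{\log y_{j-1}}}$) carries over identically here, since $\log y_0 \geqslant 2^{\ell^K(1-K/\ell)}$ dwarfs $\log X$. Combining with $\Psi_1(z,p) \leqslant z$ and integrating ${\rm d}z/z$ from $x_i/y_j$ to $x_i/y_{j-1}$ gives $\log(y_j/y_{j-1}) = ({\rm e}^{1/\ell}-1)\log y_{j-1}$, so each $j$-th summand is $O({\rm e}^{1/\ell}-1) = O(1/\ell)$. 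Summing over at most $J \asymp \ell^K$ admissible values of $j$, we obtain $\mathbb{E}[L^{(12)}_\ell(x_i; f)] \ll \ell^{K-1}$, comfortably within $\ell^K {\rm e}^{\ell^{-99K}/2}$ since ${\rm e}^{\ell^{-99K}/2} = 1 + O(\ell^{-99K})$.

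The only real obstacle is bookkeeping — checking that Mertens applies uniformly throughout the relevant range — which is immediate from the sheer size of $\log y_0$. One can tighten the bound further by using the Dickman--de Bruijn estimate $\Psi_1(z, p) \ll z\rho(u)$ with $u := \log z/\log p \gg \ell^{100K}$, making $\rho(u)$ super-polynomially small; this refinement is not needed for the stated bound but clarifies why the small exponential correction ${\rm e}^{\ell^{-99K}/2}$ in the statement is more than generous.
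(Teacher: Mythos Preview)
Your argument is correct for the bound as literally stated: using the trivial $\mathbb{E}[|\Psi'_f(z,p)|^2]\leqslant z$, the Mertens estimate \eqref{Mertens}, and $\int_{x_i/y_j}^{x_i/y_{j-1}}{\rm d}z/z=(e^{1/\ell}-1)\log y_{j-1}$, each $j$-term is $O(1/\ell)$ and the total is $\ll \ell^{K-1}$. The paper proceeds differently: it feeds in the smooth-number bound $\mathbb{E}[|\Psi'_f(z,p)|^2]\ll z\,{\rm e}^{-\log z/(2\log p)}\leqslant z\,{\rm e}^{-\log z/(2\log y_j)}$, so the integrand becomes $z^{-1-1/(2\log y_j)}$; since $\log z\geqslant\log(x_i/y_j)\gg\ell^{100K}\log y_j$ on the range considered, this produces $\ll\ell^K{\rm e}^{-\ell^{99K}/2}$. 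The discrepancy is explained by a typo in the displayed statement: the intended right-hand side is $\ell^K{\rm e}^{-\ell^{99K}/2}$ (minus sign, exponent $\ell^{99K}$), not $\ell^K{\rm e}^{\ell^{-99K}/2}\sim\ell^K$. This matters downstream: in Lemma~\ref{lemmatop31} one multiplies by $\ell^{K/2}$ and sums over $\asymp 2^{\ell^K/c_0}$ values of $x_i$, so only the super-exponentially small bound makes $\sum_\ell\mathbb{P}^{(12)}_\ell$ converge; your $\ell^{K-1}$ would not suffice there. You effectively anticipate this in your final paragraph: the Dickman-type refinement you mention is exactly what the paper uses, in the crude form $\Psi_1(z,p)\ll z\,{\rm e}^{-u/2}$.
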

\begin{proof}[Proof]
Note first that for $p  \leqslant \frac{x_i}{z} \leqslant y_j $,
$$
\mathbb{E} \big[ |\Psi^{\prime}_f(z,p)|^2 \big] \ll z {\rm e}^{\frac{ -\log z}{2\log p}} \leqslant z {\rm e}^{\frac{ -\log z}{2\log y_j}}.
$$
By using again the bound $$\sum_{\frac{x_i}{z(1+1/X)} < p \leqslant   \frac{x_i}{z}}\frac{1}{p} \ll \frac{1}{X\log y_{j-1}}$$
we get
$$
\begin{aligned}
\mathbb{E}\big[ L^{(12)}_{\ell}(x_i;f) \big] & \ll \sum_{\substack{j=1\\ \frac{\log x_i}{\log y_{j-1}} > \ell^{100K}}}^J \int_{x_i/y_{j}}^{x_i/y_{j-1}} X \sum_{\frac{x_i}{z(1+1/X)} < p \leqslant   \frac{x_i}{z}} \frac{1}{p} z {\rm e}^{\frac{ -\log z}{2\log y_j }} \frac{{\rm d}z}{z^2} 
\\ & \ll \sum_{\substack{j=1\\ \frac{\log x_i}{\log y_{j-1}} > \ell^{100K}}}^J \frac{1}{\log y_j} \int_{x_i/y_{j}}^{x_i/y_{j-1}} \frac{1}{z^{1+\frac{1}{2\log y_j} }}  {\rm d}z  \ll \ell^K {\rm e}^{-\ell^{99K}/2}.
\end{aligned}
$$
\end{proof}
\begin{lemma}\label{lemmatop31} For $T\geqslant 1$, we have $ \sum_{\ell \geqslant 1}\mathbb{P}^{(12)}_{\ell} $  converges.
\end{lemma}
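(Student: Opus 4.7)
\noindent\emph{Proof proposal.} The strategy is to combine Markov's inequality with the sharp expectation bound already established in Lemma~\ref{lemmatop99}, and then absorb the union bound over the test points $x_i\in {]X_{\ell-1},X_{\ell}]}$ into the double-exponential decay $e^{-\ell^{99K}/2}$. In detail, I would first write
$$
\mathbb{P}^{(12)}_{\ell} \leqslant \sum_{X_{\ell-1} < x_i \leqslant X_{\ell}} \mathbb{P}\bigg[L^{(12)}_{\ell}(x_i;f) > \frac{T(\ell)}{\ell^{K/2}}\bigg] \leqslant \sum_{X_{\ell-1} < x_i \leqslant X_{\ell}} \frac{\ell^{K/2}}{T(\ell)}\,\mathbb{E}\big[L^{(12)}_{\ell}(x_i;f)\big].
$$

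Next, I would bound the number of test points in ${]X_{\ell-1},X_{\ell}]}$. Since $x_i=\lfloor e^{i^{c_0}}\rfloor$ and $X_{\ell}=e^{2^{\ell^K}}$, the condition $x_i\leqslant X_{\ell}$ forces $i \leqslant 2^{\ell^K/c_0}$, so the number of admissible $i$ is at most $2^{\ell^K/c_0}$. Plugging in Lemma~\ref{lemmatop99} (which gives $\mathbb{E}[L^{(12)}_{\ell}(x_i;f)]\ll \ell^K e^{-\ell^{99K}/2}$) together with $T(\ell)\geqslant 1$ yields
$$
\mathbb{P}^{(12)}_{\ell} \ll 2^{\ell^K/c_0}\cdot \ell^{3K/2}\, e^{-\ell^{99K}/2}.
$$

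Since $K=25/\varepsilon$ is a fixed constant, the exponent $\ell^{99K}/2$ grows much faster than $\ell^K/c_0$, so the right-hand side is super-exponentially (in fact doubly-exponentially) small in $\ell$, hence trivially summable. This gives the convergence of $\sum_{\ell\geqslant 1}\mathbb{P}^{(12)}_{\ell}$.

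There is essentially no obstacle to overcome here: Lemma~\ref{lemmatop99} already produces a bound so strong that it comfortably defeats any polynomial-type loss coming from the union bound over the test points. The only thing to check carefully is the counting of test points $x_i$ inside ${]X_{\ell-1},X_{\ell}]}$ against the $\ell^{99K}$ savings in the exponent, and this is straightforward given the explicit choices $x_i=\lfloor e^{i^{c_0}}\rfloor$ and $X_{\ell}=e^{2^{\ell^K}}$.
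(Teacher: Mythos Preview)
Your proposal is correct and follows essentially the same route as the paper: union bound over the test points, Markov's inequality, Lemma~\ref{lemmatop99} for the expectation, and the count $\#\{x_i\in{]X_{\ell-1},X_{\ell}]}\}\leqslant 2^{\ell^K/c_0}$, yielding the same bound $\mathbb{P}^{(12)}_{\ell}\ll 2^{\ell^K/c_0}\ell^{3K/2}e^{-\ell^{99K}/2}$ (which the paper then simplifies to $\ll e^{-\ell^{98K}}$).
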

\begin{proof}[Proof]
For $\ell$ large and by applying Lemma~\ref{lemmatop99}, we have
$$
\begin{aligned}
 \mathbb{P}^{(12)}_{\ell} & \leqslant \sum_{X_{\ell -1} < x_{i} \leqslant   X_{\ell}} \frac{\ell^{K/2}}{T} \mathbb{E}\big[ L^{(12)}_{\ell}(x_i;f) \big]
 \\ & \ll  \sum_{X_{\ell -1} < x_{i} \leqslant   X_{\ell}} \ell^{K/2}  \ell^K {\rm e}^{-\ell^{99K}/2}
 \\ &\ll 2^{\ell^K /c_0} \ell^{3K/2} {\rm e}^{-\ell^{99K}/2} \ll {\rm e}^{-\ell^{98K}}.
\end{aligned}
$$
Thus 
$\sum_{\ell \geqslant 1} \mathbb{P}^{(12)}_{\ell}$ converges.
\end{proof}
\subsection{Bounding \texorpdfstring{$\mathbb{P}^{(2)}_{\ell}$}.}
The goal of this subsection is to bound $\mathbb{P}^{(2)}_{\ell}$. 
 \begin{lemma}\label{lemmatop32}
For $T\geqslant 1$, the sum $\sum_{\ell \geqslant 1} \mathbb{P}^{(2)}_{\ell}$ converges.
 \end{lemma}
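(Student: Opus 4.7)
The plan is to mirror the strategy of Lemma~\ref{lemmatop31}: use a union bound over $x_i$ combined with Markov's inequality on the first moment of $L^{(2)}_{\ell}(x_i;f)$, then establish convergence of the resulting series in $\ell$. Concretely,
\[
    \mathbb{P}^{(2)}_{\ell} \leqslant \frac{\ell^{K/2}}{T(\ell)}\sum_{X_{\ell-1}<x_i\leqslant X_{\ell}} \mathbb{E}\bigl[L^{(2)}_{\ell}(x_i;f)\bigr],
\]
so the task reduces to controlling the expectation.

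To bound $\mathbb{E}[L^{(2)}_{\ell}(x_i;f)]$, I would apply the orthogonality estimate $\mathbb{E}[|\Psi'_f(x_i/t,p)|^2]\leqslant \Psi_1(x_i/t,p)\leqslant x_i/t$ inside the integral and exchange integration and expectation. Two sources of smallness come from the thin interval $]y_j,y_j(1+1/X)]$: the $t$-integration against $\mathrm{d}t/t$ over that interval contributes $\log(1+1/X)\ll 1/X$, while the prime sum over the thin range $]\max\{t/(1+1/X),y_{j-1}\},y_j]$ is controlled by the strong form of Mertens' theorem with the PNT error term, exactly as in the estimate~\eqref{Mertens}, giving $\ll 1/(X\log y_j)$. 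Summing over $j$ then yields
\[
    \mathbb{E}\bigl[L^{(2)}_{\ell}(x_i;f)\bigr] \ll \sum_{j=1}^{J} \frac{1}{X\log y_j} \ll \frac{\ell}{X\log y_0}.
\]

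To close the argument, I would plug in $X=(\log x_i)^{8r^2-8r+4}$ from Lemma~\ref{lemmachoseX} together with the parametrization $x_i=\lfloor {\rm e}^{i^{c_0}}\rfloor$, so that $X\asymp i^{c_0(8r^2-8r+4)}$. With $r>1/c_0$ the exponent exceeds $1$, hence $\sum_{x_i\in\,]X_{\ell-1},X_{\ell}]} 1/X$ is controlled by a convergent tail starting at $i\asymp 2^{(\ell-1)^K/c_0}$. Combined with $\log y_0 = 2^{\ell^K(1-K/\ell)}$ being double-exponential in $\ell$, this will give a super-exponentially small bound on $\mathbb{P}^{(2)}_{\ell}$, and $\sum_{\ell\geqslant 1}\mathbb{P}^{(2)}_{\ell}$ converges.

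The main obstacle is merely accounting: one must verify that the two independent factors of $1/X$ (from the short $t$-integration and from Mertens over a thin prime range) together with $1/\log y_0$ dominate the Markov cost $\ell^{K/2}/T(\ell)$ and the $\sim 2^{\ell^K/c_0}$ test points per $\ell$-block. The parameter choice $r>1/c_0$ already fixed in Section~\ref{setion_smouthing} handles this automatically, so beyond careful bookkeeping no new idea is needed.
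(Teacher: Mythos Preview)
Your proposal is correct and follows essentially the same route as the paper: union bound over $x_i$, Markov on the first moment, Mertens on the thin prime interval to gain $1/(X\log y_j)$, and then the short $t$-integration to gain a second $1/X$ that survives after cancelling the explicit factor $X$. The only difference is that the paper invokes the smooth-number estimate $\mathbb{E}[|\Psi'_f(z,p)|^2]\ll z\,e^{-\log z/(2\log p)}$ and carries the exponential through the $z$-integral, whereas you use the trivial bound $\leqslant z$; since the paper ultimately bounds the factor $e^{-\log x_i/(2\log y_j)}$ by $1$, both arguments land on $\sum_j 1/(X\log y_j)$ and your version is slightly more direct.
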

 \begin{proof}[Proof]
 By bounding the expectation, we get 
$$
\begin{aligned}
\mathbb{E}\big[L^{(2)}_{\ell}(x_i;f)\big] & \ll \sum_{j=1}^J \int_{\frac{x_i}{y_j(1+1/X)}}^{\frac{x_i}{y_j}} X \sum_{\max\big( \frac{x_i}{z(1+1/X)},y_{j-1}\big) < p \leqslant   y_j} \frac{1}{p} {\rm e}^{-\frac{\log z}{2 \log p}}\frac{{\rm d}z}{z}
\\ & \leqslant   \sum_{j=1}^J \int_{\frac{x_i}{y_j(1+1/X)}}^{\frac{x_i}{y_j}} X \sum_{\max\big( \frac{x_i}{z(1+1/X)},y_{j-1}\big) < p \leqslant   y_j} \frac{1}{p} {\rm e}^{-\frac{\log z}{2 \log y_j}}\frac{{\rm d}z}{z}.
\end{aligned}
$$
Note that $y_j \leqslant \frac{x_i}{z}$, then by applying again \eqref{Mertens}, we have
$$
\sum_{\max\big( \frac{x_i}{z(1+1/X)},y_{j-1}\big) < p \leqslant   y_j} \frac{1}{p} \leqslant  \sum_{ \frac{x_i}{z(1+1/X)} < p \leqslant   \frac{x_i}{z}} \frac{1}{p} \ll \frac{1}{X \log y_j} .
$$
Let's back now to the expectation, we get
$$
\begin{aligned}
\mathbb{E}\big[L^{(2)}_{\ell}(x_i;f)\big]  & \ll \sum_{j=1}^J \frac{1}{\log y_j} \int_{\frac{x_i}{y_j(1+1/X)}}^{\frac{x_i}{y_j}} \frac{1}{z^{1+\frac{1}{2\log y_j}}} {\rm d}z
\\ & \ll \sum_{j=1}^J \frac{1}{{\rm e}^{\frac{\log x_i}{2 \log y_j}}}\bigg( {\rm e}^{\frac{\log (1+1/X)}{2 \log y_j}} -1 \bigg)
\\ & \ll \sum_{j=1}^J \frac{1}{X \log y_j}.
\end{aligned}
$$
Recall that $X$ is chosen to be $(\log x_i)^{8r^2-8r+4}$ at the end of Lemma~\ref{lemmachoseX}'s proof. We denote $r^{\prime}:=8r^2-8r+4 $. Note that $r^{\prime}>1$. We deduce then 
$$
\begin{aligned}
\mathbb{P}^{(2)}_{\ell} & \ll \frac{\ell^{K/2}}{T}\sum_{X_{\ell-1} <x_i \leqslant   X_{\ell}} \sum_{j=1}^J \frac{1}{X \log y_j}
\\ & \ll \frac{\ell^{K/2}}{T} 2^{\ell^{K}/c_0} \frac{2^{K\ell^{K-1}}}{2^{r^{\prime}(\ell-1)^{K} + \ell^K}}.
\end{aligned}
$$
Since $r^{\prime}>r>1/c_0$, we get the convergence of the quantity $$\sum_{\ell \geqslant 1}   2^{\frac{K}{2} \frac{\log \ell}{\log 2}  +\frac{1}{c_0}\ell^{K} +K \ell^{K-1}- r^{\prime} (\ell-1)^{K}-\ell^K}.$$ Thus, the sum $\sum_{\ell \geqslant 1} \mathbb{P}^{(2)}_{\ell} $ converges.
\end{proof}
 
\subsection{Convergence of \texorpdfstring{$\sum_{\ell \geqslant 1} \mathbb{P}\big[ \mathcal{B}^{(1)}_\ell\big]$}.}

From \eqref{inequality9901}, there exists a constant $C$ such that
$$
\begin{aligned}
    C\frac{ V_{\ell}(x_i;f)}{\ell^{K/2}x_i} & \leqslant  \frac{\ell \log \ell}{\ell^{K/2}}\sup_{\substack{1 \leqslant y_j\leqslant J }} M_{\ell}(x_i,y_j;f)+  \log \ell \Bigg( \sum_{k=2}^3\sup_{\substack{1 \leqslant y_j\leqslant J }} \lambda^{(k)}_{\ell}(x_i,y_j;f)  \Bigg) \\ & \,\,\,\,\,\,\,\, + \frac{ W_{\ell}(x_i;f)}{x_i} +   L^{(12)}_{\ell}(x_i;f) + L^{(2)}_{\ell}(x_i;f) 
\end{aligned}
$$
We set 
$$
\mathbb{P}^{V}_{\ell}:=P\bigg[ \sup_{ X_{\ell-1}<x_i \leqslant X_{\ell}} \frac{\ell^{K/2} V_{\ell}(x_i;f)}{x_i} > \frac{6T(\ell)}{C} \bigg].$$
\begin{lemma}\label{convergence_P_v_l}
    We have $\sum_{\ell \geqslant 1} \mathbb{P}^{V}_{\ell}$ converges.
\end{lemma}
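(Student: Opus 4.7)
The plan is a straightforward six-fold union bound that packages, into a single statement, the convergence results established piecewise throughout this section. The deterministic pointwise inequality displayed immediately before the lemma bounds $C\,\ell^{K/2}V_\ell(x_i;f)/x_i$ by a sum of six quantities whose thresholds have been calibrated, in the definitions of $\mathbb{P}^{\lambda,k}_\ell$, $\mathbb{P}^{(12)}_\ell$, $\mathbb{P}^{(2)}_\ell$, and $\mathbb{P}^W_\ell$, precisely so that they combine cleanly here.

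First I would observe the following deterministic inclusion. If, for some $x_i \in \,]X_{\ell-1},X_\ell]$, one has $C\,\ell^{K/2}V_\ell(x_i;f)/x_i>6T(\ell)$, then at least one of the six summands on the right-hand side of the inequality must exceed $T(\ell)$: either $\ell^{K/2}\cdot \ell\log\ell\cdot\sup_{0\le j\le J}\lambda^{(k)}_\ell(x_i,y_j;f)>T(\ell)$ for some $k\in\{1,2,3\}$, or $\ell^{K/2}L^{(12)}_\ell(x_i;f)>T(\ell)$, or $\ell^{K/2}L^{(2)}_\ell(x_i;f)>T(\ell)$, or $\ell^{K/2}W_\ell(x_i;f)/x_i>T(\ell)$. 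Each of these conditions, after dividing out the appropriate factor of $\ell^{K/2}$, coincides with the event defining one of the six previously introduced probabilities. Taking a union bound over the six cases therefore yields
\[
\mathbb{P}^V_\ell \;\leq\; \sum_{k=1}^3 \mathbb{P}^{\lambda,k}_\ell \;+\; \mathbb{P}^{(12)}_\ell \;+\; \mathbb{P}^{(2)}_\ell \;+\; \mathbb{P}^W_\ell.
\]

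To conclude the lemma I would invoke the earlier results: $\sum_\ell\mathbb{P}^W_\ell<\infty$ by Proposition~\ref{prop2}, $\sum_\ell\mathbb{P}^{\lambda,1}_\ell<\infty$ by Proposition~\ref{prop1}, $\sum_\ell\mathbb{P}^{\lambda,2}_\ell$ and $\sum_\ell\mathbb{P}^{\lambda,3}_\ell$ are finite by Lemma~\ref{Lemma_convergence_lambda_2_3}, $\sum_\ell\mathbb{P}^{(12)}_\ell<\infty$ by Lemma~\ref{lemmatop31}, and $\sum_\ell\mathbb{P}^{(2)}_\ell<\infty$ by Lemma~\ref{lemmatop32}. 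Adding these six convergent series yields convergence of $\sum_\ell\mathbb{P}^V_\ell$.

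There is no genuine obstacle here: the statement is essentially a bookkeeping step, and the only point that requires mild care is the numerical consistency of the factors of $\ell^{K/2}$ and the constant $6T(\ell)/C$, which are arranged so that each of the six terms on the right-hand side of the pointwise inequality carries a budget of exactly $T(\ell)$ (resp.\ $T(\ell)/\ell^{K/2}$ after rescaling), matching the defining thresholds of the events considered earlier. Once this calibration is verified, the convergence follows at once.
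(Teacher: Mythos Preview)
Your proof is correct and follows the same approach as the paper: you establish the union-bound inequality $\mathbb{P}^V_\ell \le \sum_{k=1}^3 \mathbb{P}^{\lambda,k}_\ell + \mathbb{P}^{(12)}_\ell + \mathbb{P}^{(2)}_\ell + \mathbb{P}^W_\ell$ via the pointwise inequality and pigeonhole, then invoke Propositions~\ref{prop2}, \ref{prop1} and Lemmas~\ref{Lemma_convergence_lambda_2_3}, \ref{lemmatop31}, \ref{lemmatop32} exactly as the paper does. Your write-up is in fact slightly more explicit about the pigeonhole step that justifies the event inclusion.
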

\begin{proof}[Proof]
    It suffices to observe that
    $$\mathbb{P}^{V}_{\ell}  \leqslant \mathbb{P}_{\ell}^{\lambda,1} + \mathbb{P}_{\ell}^{\lambda,2}+ \mathbb{P}_{\ell}^{\lambda,3} + \mathbb{P}_{\ell}^{(12)} + \mathbb{P}_{\ell}^{(2)} + \mathbb{P}_{\ell}^{W}.$$
Since $T_1(\ell) \geqslant \ell^4$, 
\begin{itemize}
    \item by Proposition \ref{converge_P_lambda_1}, $\sum_{\ell \geqslant 1} \mathbb{P}_{\ell}^{\lambda,1}$ converges,
    \item by Lemma \ref{Lemma_convergence_lambda_2_3}, $\sum_{\ell \geqslant 1} \mathbb{P}_{\ell}^{\lambda,2}$ and $\sum_{\ell \geqslant 1} \mathbb{P}_{\ell}^{\lambda,3}$ converge,
    \item by Lemma \ref{lemmatop31}, $\sum_{\ell \geqslant 1} \mathbb{P}_{\ell}^{(12)} $ converges,
    \item by Lemma \ref{lemmatop32}, $\sum_{\ell \geqslant 1} \mathbb{P}_{\ell}^{(2)} $ converges,
    \item by Proposition \ref{prop2}  $\sum_{\ell \geqslant 1} \mathbb{P}_{\ell}^{W}$ converges.
\end{itemize}
\end{proof}

\noindent Now we set the following event 
$$
\Sigma := \bigg\{ \forall x_i \in ] X_{\ell -1}, X_{\ell}], V_{\ell}(x_i;f) \leqslant \frac{6\ell^{K/2}T(\ell)x_i}{C} \bigg\}
$$
and for each $x_i \in ] X_{\ell -1}, X_{\ell}]$
$$
\Sigma_i := \bigg\{  V_{\ell}(x_i;f) \leqslant \frac{6\ell^{K/2}T(\ell)x_i}{C} \bigg\}.
$$
Note that $\mathbb{P}\big[\, \overline{\Sigma}\,\big]=\mathbb{P}_{\ell}^V$.
\begin{prop}
    The sum $\sum_{\ell \geqslant 1} \mathbb{P}\big[ \mathcal{B}^{(1)}_\ell\big]$ converges.
\end{prop}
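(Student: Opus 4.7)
The plan is to decompose
$$
\mathbb{P}\big[\mathcal{B}_\ell^{(1)}\big] \leqslant \mathbb{P}\big[\,\overline{\Sigma}\,\big] + \sum_{X_{\ell-1}< x_i \leqslant X_\ell} \mathbb{P}\Big[\big\{|M_f^{(1)}(x_i)|>\sqrt{x_i}R(x_i)\big\}\cap \Sigma_i\Big],
$$
since $\mathcal{B}_\ell^{(1)}\cap \Sigma$ is contained in the union of the events on the right. The first term is $\mathbb{P}_\ell^{V}$, which is summable in $\ell$ by Lemma~\ref{convergence_P_v_l}, so the whole issue is to bound the second term.

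For each fixed $x_i$, I would view $M_f^{(1)}(x_i)=\sum_{y_0<p\leqslant y_J}Y_p$ as a complex martingale difference sum with respect to the filtration $(\mathcal{F}_p)_{p\in\mathcal{P}}$. Since $|f(p)|=1$ in both models, one has $|Y_p|=|\Psi_f^\prime(x_i/p,p)|$, and the random variable $S_p:=|\Psi_f^\prime(x_i/p,p)|$ is $\mathcal{F}_p$-measurable, so it is a genuine predictable bound in the sense required by Lemma~\ref{updated_hoeffding}. Moreover
$$
\sum_{y_0<p\leqslant y_J} S_p^{2} \;=\; V_\ell(x_i;f),
$$
so on the event $\Sigma_i$ this is bounded deterministically by $\tfrac{6T(\ell)x_i}{C\ell^{K/2}}$. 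Plugging this into Lemma~\ref{updated_hoeffding} with $\varepsilon = \sqrt{x_i}R(x_i)$ gives
$$
\mathbb{P}\Big[\big\{|M_f^{(1)}(x_i)|\geqslant \sqrt{x_i}R(x_i)\big\}\cap\Sigma_i\Big]\;\leqslant\; 2\exp\!\bigg(-\frac{C\,R(x_i)^{2}\,\ell^{K/2}}{60\,T(\ell)}\bigg).
$$

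Now I use the sizes of the parameters: $R(x_i)^2=(\log_2 x_i)^{1/2+2\varepsilon}$, $\ell^{K/2}\asymp (\log_2 x_i)^{1/2}$ and $\log_2 x_i\asymp \ell^K$ with $K=25/\varepsilon$, so the exponent is of order $-\ell^{K(1+2\varepsilon)}/T(\ell)$. With the default choice $T(\ell)=\ell^{6}$ (the minimal choice allowed in~\eqref{lambda(k)}), this is $-c\,\ell^{K(1+2\varepsilon)-6}$. Since $K(1+2\varepsilon)-K=2K\varepsilon = 50$, we have $K(1+2\varepsilon)-6 > K$ with plenty of room, so the doubly exponential decay dominates the crude count of test points $\#\{x_i\in{]X_{\ell-1},X_\ell]}\}\leqslant 2^{\ell^K/c_0}$. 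Explicitly,
$$
\sum_{X_{\ell-1}<x_i\leqslant X_\ell}\!\!2\exp\!\big(-c\,\ell^{K(1+2\varepsilon)-6}\big)\;\leqslant\; 2^{\ell^K/c_0+1}\exp\!\big(-c\,\ell^{K+44}\big),
$$
which is summable in $\ell$. Combined with the summability of $\mathbb{P}[\overline{\Sigma}]$, this gives the convergence of $\sum_\ell \mathbb{P}[\mathcal{B}_\ell^{(1)}]$.

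The main obstacle is purely conceptual rather than computational: one must have built the event $\Sigma$ of a deterministic variance bound \emph{before} applying Hoeffding, so that the concentration inequality can be invoked with a constant $T$; this is precisely the role of the conditional-style Lemma~\ref{updated_hoeffding} and of all the bookkeeping of Sections~\ref{setion_smouthing}–\ref{Section_Lambda_2_3}, which reduce the a priori random bound $\sum S_p^2=V_\ell(x_i;f)$ to the deterministic bound on $\Sigma_i$. Once this reduction is in place, the final deduction is a short accounting of the exponents $K$, $K/2$ and $K(1+2\varepsilon)-6$, with the constraint $K\varepsilon>3$ (automatic since $K\varepsilon=25$) ensuring that the Hoeffding decay overwhelms the number of test points in each window $]X_{\ell-1},X_\ell]$.
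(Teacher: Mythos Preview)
Your argument is correct and is essentially identical to the paper's own proof: you make the same decomposition $\mathbb{P}[\mathcal{B}_\ell^{(1)}]\leqslant \mathbb{P}[\overline{\Sigma}]+\sum_{x_i}\mathbb{P}[\{|M_f^{(1)}(x_i)|\geqslant\sqrt{x_i}R(x_i)\}\cap\Sigma_i]$, apply Lemma~\ref{updated_hoeffding} to the martingale differences $Y_p$ with the predictable bound $S_p=|\Psi_f'(x_i/p,p)|$, take $T(\ell)=\ell^6$, and verify that the resulting exponent $\ell^{K+2K\varepsilon-6}=\ell^{K+44}$ beats the $2^{\ell^K/c_0}$ many test points. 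The only cosmetic difference is in the exact constants in the Hoeffding exponent, which is immaterial.
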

  \begin{proof}[Proof]
      Recall that $T(\ell) = \ell^{10} $, this guarantees the convergence of $\sum_{\ell \geqslant 1} \mathbb{P}_{\ell}^V$ by Lemma \ref{convergence_P_v_l}. We have
      $$
      \begin{aligned}
          \mathbb{P}\big[ \mathcal{B}^{(1)}_\ell\big] & = \mathbb{P}\bigg[ \sup_{X_{\ell-1} < x_i \leqslant   X_{\ell}} \frac{| M^{(1)}_f(x_i)|}{\sqrt{x_i} R(x_i) } >  1    \bigg] 
          \\ & \leqslant \mathbb{P}\bigg[ \bigcup_{X_{\ell-1} < x_i \leqslant   X_{\ell}}\bigg\{  \frac{| M^{(1)}_f(x_i)|}{\sqrt{x_i} R(x_i) } \geqslant  1 \bigg\} \bigcap \Sigma_i \,  \bigg]+ \mathbb{P}_{\ell}^V
          \\ & \leqslant  \sum_{X_{\ell-1} < x_i \leqslant   X_{\ell}} \mathbb{P}\bigg[ \bigg\{| M^{(1)}_f(x_i)| \geqslant  \sqrt{x_i} R(x_i) \bigg\} \, \bigcap \Sigma_i \,  \bigg]+ \mathbb{P}_{\ell}^V.
      \end{aligned}
      $$
      At this stage, we can't apply Lemma \ref{hoeffding}. In fact, under the  event $\Sigma_i$, it is hard to say if $M_f^{(1)}(x_i)$ still a sum of martingale difference sequence. However the  Lemma~\ref{updated_hoeffding_chapter_1} allows us to give a strong upper bound under the event $\Sigma_i$. Thus, 
      $$
      \begin{aligned}
          \mathbb{P}\bigg[ \bigg\{ | M^{(1)}_f(x_i)| \geqslant  \sqrt{x_i} R(x_i) \bigg\} \, \bigcap \Sigma_i \,  \bigg] & \leqslant  2 \exp \bigg(\frac{-2Cx_i R(x_i)^2}{6T(\ell) \ell^{K/2}x_i} \bigg)
          \\ & \leqslant 2  \exp \bigg(-C_1 \ell^{K+2K\varepsilon-6} \bigg)
      \end{aligned}
      $$
      where $C_1$ is an absolute constant. Since by assumption $K\varepsilon = 25$, we have then 
      $$
      \sum_{X_{\ell-1} < x_i \leqslant   X_{\ell}} \exp \bigg(-C_1 \ell^{K+2K\varepsilon-6} \bigg) \leqslant  \exp \bigg( \frac{\log 2}{c_0}\ell^{K}-C_1\, \ell^{K} \ell^{44} \bigg)
      $$
    Finally, we get
      $$
      \mathbb{P}\big[\mathcal{B}_{\ell}^{(1)}\big] \ll  \exp \bigg( \frac{\log 2}{c_0}\ell^{K}-C_1\, \ell^{K} \ell^{44} \bigg) + \mathbb{P}^{V}_{\ell} 
      $$
      Thus the sum $\sum_{\ell \geqslant 1} \mathbb{P}\big[\mathcal{B}_{\ell}^{(1)}\big]$ converges.
  \end{proof}

\section*{Acknowledgement}
The author would like to thank his supervisor Régis de la Bretèche for his patient guidance, encouragement and the judicious advices he has provided throughout the work that led to this paper.
The author would also thank Gérald Tenenbaum and Adam Harper for their helpful remarks and useful
comments.

\bibliographystyle{abbrv}
\bibliography{references.bib}

\begin{thebibliography}{10}

\bibitem{basquin}
J.~Basquin.
\newblock Sommes friables de fonctions multiplicatives aléatoires.
\newblock {\em Acta Arithmetica}, 152(3):243--266, 2012.

\bibitem{benatar}
J.~Benatar, A.~Nishry, and B.~Rodgers.
\newblock {Moments of polynomials with random multiplicative coefficients}.
\newblock {\em Mathematika}, 68(1):191--216, 2022.

\bibitem{Benami}
A.~Bonami.
\newblock { Étude des coefficents de Fourier des fonction de $L^p(G)$.}
\newblock {\em Ann. Inst. Fourier(Grenoble)}, 20:335--402, 1970.

\bibitem{sound_chat}
S.~Chatterjee and K.~Soundararajan.
\newblock {Random multiplicative functions in short intervals}.
\newblock {\em Int. Math. Res. Not. IMRN}, 2012(3):479--492, 2012.

\bibitem{Erdos}
P.~Erd\H{o}s.
\newblock Some applications of probability methods to number theory.
\newblock {\em In Mathematical statistics and applications}, Vol. B (Bad
  Tatzmannsdorf, 1983):1--18, 1985.

\bibitem{GranvilleSound}
A.~Granville and K.~Soundararajan.
\newblock {Large character sums.}
\newblock {\em J. Amer. Math. Soc.}, 14(2):365--397, 2001.

\bibitem{GranvilleSound2}
A.~Granville and K.~Soundararajan.
\newblock {The distribution of values of $L(1,\chi_d)$}.
\newblock {\em Geom. Funct. Anal.}, 13(5):992--1028., 2003.

\bibitem{Gut}
A.~Gut.
\newblock {\em Probability: a graduate course}.
\newblock Springer Texts in Statistics. Springer, New York, 2005.

\bibitem{halasz}
G.~Hal\'{a}sz.
\newblock On random multiplicative functions.
\newblock {\em In Hubert Delange Colloquium (Orsay, 1982)}, 83(4):74--96, 1983.

\bibitem{Harper2}
A.~J. Harper.
\newblock {Moments of random multiplicative functions, II: High moments}.
\newblock {\em Algebra and Number Theory}, 13(10):2277--2321, 2019.

\bibitem{Harper}
A.~J. Harper.
\newblock {Moments of random multiplicative functions, I: Low moments, better
  than squareroot cancellation, and critical multiplicative chaos}.
\newblock {\em Forum Math. Pi}, 8:95pp, 2020.

\bibitem{Harper3}
A.~J. Harper.
\newblock Almost sure large fluctuations of random multiplicative functions.
\newblock {\em Int. Math. Res. Not. IMRN}, (3):2095--2138, 2023.

\bibitem{Tenenbaum}
Y.-K. Lau, G.~Tenenbaum, and J.~Wu.
\newblock On mean values of random multiplicative functions.
\newblock {\em Proc. Amer. Math. Soc.}, 141(2):409--420, 2013.

\bibitem{Mastrostefano}
D.~Mastrostefano.
\newblock {Almost sure upper bound random multiplicative functions}.
\newblock {\em Electronic Journal of Probability}, 27:Paper No. 32, 21, 2022.

\bibitem{MontgVaugh}
H.~L. Montgomery and R.~C. Vaughan.
\newblock {\em Multiplicative Number Theory I: Classical Theory}.
\newblock Cambridge U.P, 2006.

\bibitem{Iosif}
I.~Pinelis.
\newblock {An approach to inequalities for the distributions of
  infinite-dimensional Martingales}.
\newblock {\em Probability in Banach Spaces, 8, (Brunswick, ME, 1991)}, pages
  128--134, 1992.

\bibitem{Wintner}
A.~Wintner.
\newblock {Random factorizations and Riemann’s hypothesis}.
\newblock {\em Duke Math. J.}, 11:267--275, 1944.

\end{thebibliography}

\begin{center}
Université Paris Cité, Sorbonne Université
CNRS,\\
Institut de Mathématiques de Jussieu- Paris Rive Gauche,\\
F-75013 Paris, France\\
E-mail: \author{rachid.caich@imj-prg.fr}
\end{center}
\end{document}